\title{Transfer of Fourier multipliers into Schur multipliers and sumsets in a discrete group}
\author{Stefan Neuwirth\and \'Eric Ricard}
\date{
}
\theoremstyle{plain}
\newtheorem{thm}{Theorem}[section]
\newtheorem{lem}[thm]{Lemma}
\newtheorem{prp}[thm]{Proposition}
\newtheorem{cor}[thm]{Corollary}
\theoremstyle{definition}
\newtheorem{dfn}[thm]{Definition}
\newtheorem*{NotTer}{Notation and terminology}
\newtheorem{???}[thm]{Conjecture}
\theoremstyle{remark}
\newtheorem{rem}[thm]{Remark}
\newtheorem{exa}[thm]{Example}
\numberwithin{equation}{section}
\providecommand{\abs}[1]{\lvert#1\rvert}
\def\block#1#2{{\setbox0=\hbox{#1\kern1ex}\leftskip=\wd0\parindent=-\wd0\par\leavevmode\box0 #2\par}} 
\def\borne{\mathbb B}
\def\C{\mathbb C}
\DeclareMathOperator{\card}{\#}
\def\cb{\mathrm{cb}}
\def\col{c}
\def\Col{C}
\def\Cont{\mathrm C}
\def\e{\mkern1mu\mathrm e\mkern1mu}
\def\eps{\varepsilon}
\def\Fourier{\mathrm M}
\def\ga{\gamma}
\def\Ga{\varGamma}
\renewcommand{\ge}{\geqslant}
\def\Hardy{\mathrm H}
\def\Ht{\mathscr H}
\def\iu{\mkern1mu\mathrm i\mkern1mu}
\def\Id{\mathrm{Id}}
\def\id{\epsilon}
\def\imp{\Rightarrow}
\def\io[#1,#2]{\mathopen]#1,\allowbreak#2\mathclose[}
\def\iog[#1,#2]{\mathopen]#1,\allowbreak#2]}
\def\bigiog[#1,#2]{\bigl]#1,\allowbreak#2\bigr]}
\def\iod[#1,#2]{[#1,\allowbreak#2\mathclose[}
\def\bigiod[#1,#2]{\bigl[#1,\allowbreak#2\bigr[}
\def\Ell{\mathrm L}
\def\la{\lambda}
\def\La{\varLambda}
\def\bLa{{\breve\La}}
\def\HLa{{\mathaccent"707D \La}}
\renewcommand{\le}{\leqslant}
\def\mat{{\mathrm S}^\infty}
\def\N{\mathbb N}
\providecommand{\norm}[1]{\lVert#1\rVert}
\providecommand{\bignorm}[1]{\bigl\lVert#1\bigr\rVert}
\providecommand{\biggnorm}[1]{\biggl\lVert#1\biggr\rVert}
\providecommand{\Bignorm}[1]{\Bigl\lVert#1\Bigr\rVert}
\def\ph{\varphi}
\def\bph{{\breve\varphi}}
\def\Hph{{\mathaccent"707D \ph}}
\def\pstar{^{\scriptscriptstyle(\kern-1pt\lower0.5pt\hbox{$\scriptstyle *$}\kern-1pt)}}
\def\ppstar{^{\scriptscriptstyle(\kern-1pt*\kern-1pt)}}
\def\R{\mathbb R}
\def\rh{\varrho}
\def\Rt{\mathscr T}
\def\row{r}
\def\Row{R}
\def\Sch{\mathrm S}
\def\Schur{\mathrm M}
\DeclareMathOperator\sgn{sgn}
\DeclareMathOperator\Hsgn{\mathaccent"707D{sgn}}
\def\T{\mathbb T}
\newcommand{\tens}{\otimes}
\def\th{\vartheta}
\DeclareMathOperator{\tr}{tr}
\def\un{\chi}
\def\Hun{\mathaccent"707D\un_{\Z^+}}
\def\vn{\Ell^\infty}
\def\Z{\mathbb Z}
\begin{document}
\maketitle
\makeatletter
\renewcommand{\@makefntext}[1]{#1}
\makeatother
\footnotetext{Both authors were partially supported by ANR grant 06-BLAN-0015.}
\begin{abstract}
\noindent We inspect the relationship between relative Fourier multipliers on noncommutative Le\-besgue-Orlicz spaces of a discrete group~$\Ga$ and relative Toeplitz-Schur multipliers on Schatten-von-Neumann-Orlicz classes. Four applications are given: lacunary sets, unconditional Schauder bases for the subspace of a Lebesgue space determined by a given spectrum $\La\subseteq\Ga$, the norm of the Hilbert transform and the Riesz projection on Schatten-von-Neumann classes with exponent a power of~2, and the norm of Toeplitz Schur multipliers on Schatten-von-Neumann classes with exponent less than~1.
\end{abstract}

\section{Introduction}

\noindent Let $\La$~be a subset of~$\Z$ and let $x$~be a bounded measurable
function on the circle~$\T$ with Fourier spectrum in~$\La$:
we write $x\in\Ell^\infty_\La$, $x\sim\sum_{k\in\La}x_kz^k$. The matrix of the associated operator
$y\mapsto xy$ on~$\Ell^2$ with respect to its trigonometric basis is the Toeplitz matrix
\begin{equation*}
  (x_{r-c})_{(r,c)\in\Z\times\Z}=
  \bordermatrix{%
    &\scriptstyle\cdots&\scriptstyle1&\scriptstyle0&\scriptstyle-1&\scriptstyle\cdots\cr
    \hfil\scriptstyle\vdots&\ddots&\ddots&\ddots&\ddots&\ddots\cr
    \hfil\scriptstyle1&\ddots&x_0&x_{1}&x_{2}&\ddots\cr
    \hfil\scriptstyle0&\ddots&x_{-1}&x_0&x_{1}&\ddots\cr
    \hfil\scriptstyle-1&\ddots&x_{-2}&x_{-1}&x_0&\ddots\cr
    \hfil\scriptstyle\vdots&\ddots&\ddots&\ddots&\ddots&\ddots\cr}
\end{equation*}
with support in $\HLa=\{(r,c):r-c\in\La\}$. 

This is a point of departure for the interplay of harmonic analysis and
operator theory. In the general case of a discrete group~$\Ga$, the
counterpart to a bounded measurable function is defined as a bounded
operator on~$\ell^2_\Ga$ whose matrix has the form
$(x_{rc^{-1}})_{(r,c)\in\Ga\times\Ga}$ for some sequence
$(x_\ga)_{\ga\in\Ga}$. This will be the framework of the body of this
article, while the introduction sticks to the case $\Ga=\Z$.

We are concerned with two kinds of multipliers. A sequence
$\ph=(\ph_k)_{k\in\La}$ defines
\begin{itemize}
\item the relative Fourier multiplication operator on trigonometric
  polynomials with spectrum in~$\La$ by
  \begin{equation}
    \sum_{k\in\La}x_kz^k\mapsto\sum_{k\in\La}\ph_kx_kz^k;\label{rfmo}
  \end{equation}
\item the relative Schur multiplication operator on finite matrices
  with support in~$\HLa$ by
  \begin{equation}
    (x_{r,c})_{(r,c)\in\Z\times\Z}\mapsto(\Hph_{r,c}x_{r,c})_{(r,c)\in\Z\times\Z},\label{rsmo}
  \end{equation}
  where $\Hph_{r,c}=\ph_{r-c}$.  
\end{itemize}
Marek Bo\.zejko and Gero Fendler proved that these two multipliers
have the same norm. The operator~\eqref{rfmo} is nothing but the
restriction of~\eqref{rsmo} to Toeplitz matrices. They noted that it
is automatically \emph{completely} bounded: it has the same norm when
acting on trigonometric series with operator coefficients~$x_k$, and
this permits to remove this restriction. Schur multiplication is also
automatically completely bounded.

A part of this observation has been extended by Gilles Pisier to multipliers
acting on a translation invariant Lebesgue space~$\Ell^p_\La$ and on
the subspace~$\Sch^p_{\mkern-3mu\HLa}$ of elements of a
Schatten-von-Neumann class supported by~$\HLa$, respectively; it
yields that the complete norm of a relative Schur
multiplier~\eqref{rsmo} remains bounded by the complete norm of the
relative Fourier multiplier~\eqref{rfmo}.

But $\Ell^p_\La$ is not a subspace of~$\Sch^p_{\mkern-3mu\HLa}$, so
a relative Fourier multiplier may not be viewed anymore as the
restriction of a relative Schur multiplier to Toeplitz matrices. We
point out that this difficulty may be overcome by using Szeg\H{o}'s
limit theorem: a bounded measurable real function on~$\T$ is the
$\textrm{weak}^*$ limit of the normalised counting measure of
eigenvalues of finite truncates of its Toeplitz matrix. This method
also applies to Orlicz norms.
\begin{thm}
   Let\/ $\psi\colon\R^+\to\R^+$ be a continuous nondecreasing function
  vanishing only at~0. The norm of the relative Fourier multiplication operator~\eqref{rfmo} on
    the Lebesgue-Orlicz space\/~$\Ell^\psi_\La$ is bounded by the norm
    of the relative Schur multiplication operator~\eqref{rsmo} on the
    Schatten-von-Neu\-mann-Orlicz class\/~$\Sch^\psi_{\mkern-3mu\HLa}$.
\end{thm}

In order to deal with complete norms, we deduce  a block matrix variant of
Szeg\H{o}'s limit theorem in the style of Erik B\'edos (\cite{be97}),
Theorem~\ref{sz}. Note that
other types of approximation are also available, as the completely
positive approximation property and Reiter sequences combined with
complex interpolation. They are studied in Section~\ref{sec:loc} in
terms of local embeddings of~$\Ell^p$ into~$\Sch^p$. They are more
canonical than Szeg\H{o}'s limit theorem, but give no access to
Orlicz norms.
\begin{thm} 
  Let\/ $\psi\colon\R^+\to\R^+$ be a continuous nondecreasing function
  vanishing only at\/~0. The norm of the following operators is equal:
  \begin{itemize}
  \item the relative Fourier multiplication operator~\eqref{rfmo} on
    the Lebesgue-Orlicz space\/ $\Ell^\psi_\La(\Sch^\psi)$ of\/
    $\Sch^\psi$-valued trigonometric series with spectrum in\/~$\La$;
  \item the relative Schur multiplication operator~\eqref{rsmo} on the
    Schatten-von-Neu\-mann-Orlicz class\/ $\Sch^\psi_{\mkern-3mu\HLa}(\Sch^\psi)$ of\/
    $\Sch^\psi$-valued matrices with support in\/~$\HLa$.
  \end{itemize}
\end{thm}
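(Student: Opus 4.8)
The plan is to prove that the two norms are equal by establishing two inequalities. The inequality "Schur norm $\le$ Fourier norm" is the direction that generalizes the Bo\.zejko--Fendler transference and should follow from an averaging (Fej\'er-type or Reiter) argument that is already understood in the $\Ell^p$ setting: one conjugates the Schur multiplier by the matrices implementing translation and averages, exploiting that a Toeplitz pattern $\HLa$ is preserved; the point of the general Orlicz hypothesis on $\psi$ is only that $\Sch^\psi$ is a reasonable (quasi-)Banach symmetric space on which these contractions act, so the estimate passes to the Orlicz norm. Concretely, I would first record that a relative Schur multiplier on $\Sch^\psi_{\HLa}(\Sch^\psi)$ restricted to block-Toeplitz matrices of a fixed finite size is, after normalization, unitarily equivalent to a relative Fourier multiplier on trigonometric polynomials valued in a matrix amplification of $\Sch^\psi$, and this gives "Schur $\le$ Fourier" directly from the definition of the relative Fourier multiplier norm (which already allows operator coefficients).

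The harder and more interesting direction is "Fourier norm $\le$ Schur norm". Here I would use Szeg\H{o}'s limit theorem, as the introduction signals. Fix a trigonometric polynomial $x$ with spectrum in $\La$ and $\Sch^\psi$-coefficients; I want to bound $\norm{(\ph_k x_k)}_{\Ell^\psi_\La(\Sch^\psi)}$ by (Schur norm) $\cdot \norm{x}_{\Ell^\psi_\La(\Sch^\psi)}$. The idea is that the Orlicz norm of $x$ (more precisely of $\abs{x}$, or of the relevant positive operators built from $x$ and $\ph x$) is computed from the distribution of eigenvalues of the multiplication operator, and Szeg\H{o}'s theorem identifies this eigenvalue distribution with the weak$^*$-limit of the normalized counting measures of eigenvalues of the finite $n\times n$ truncations $T_n(x)$ of the Toeplitz matrix. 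Those truncations live in $\Sch^\psi_{\HLa}$ (up to the finite-support constraint), and the Schur multiplier sends $T_n(x)$ to $T_n(\ph\ast x)$ approximately — more carefully, to $T_n$ of the multiplied symbol with a controllable corner error of rank $o(n)$. So applying the Schur multiplier estimate to each $T_n$, passing to normalized eigenvalue counting measures, and taking the limit, one transfers the bound to the symbols. One has to handle the non-self-adjoint case: replace $x$ by the self-adjoint dilation $\bigl(\begin{smallmatrix}0&x\\x^*&0\end{smallmatrix}\bigr)$, or work with $x^*x$ and the singular-value distribution version of Szeg\H{o}'s theorem, so that "eigenvalue distribution" legitimately computes the $\Ell^\psi$ (respectively $\Sch^\psi$) norm via $\psi$.

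I would organize the proof as: (i) reduce to self-adjoint symbols by dilation; (ii) state the form of Szeg\H{o}'s limit theorem needed — convergence of $\frac1n\sum_{j=1}^n \psi(\abs{\lambda_j(T_n(x))})$ to $\frac1{2\pi}\int_\T \psi(\abs{x})$, and the analogous statement with the inner $\Sch^\psi$-valued structure handled by Fubini/iteration; (iii) show $T_n(\Hph\cdot$-Schur-multiplied$)$ agrees with the Schur multiplier applied to $T_n(x)$ exactly (Toeplitz truncations of Toeplitz matrices are exactly the relevant finite sections, and the Schur pattern $\HLa$ restricts correctly), so no error term is even needed for the multiplier identity itself — the only limiting is in Szeg\H{o}'s theorem; (iv) combine: $\frac1n\sum\psi(\abs{\lambda_j(S_\ph T_n(x))}) \le$ (Schur norm scaled appropriately) and pass to the limit to get the Fourier-side Orlicz inequality; (v) run the same argument with operator-valued (i.e.\ $\Sch^\psi$-valued) coefficients, which only amplifies the matrices and leaves Szeg\H{o}'s theorem applicable entrywise/blockwise.

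The main obstacle I anticipate is step (ii)/(v): making Szeg\H{o}'s limit theorem do exactly what is needed in the Orlicz and operator-valued generality — in particular, controlling the normalization so that the weak$^*$ limit of eigenvalue counting measures genuinely computes $\norm{\cdot}_{\Ell^\psi}$ rather than just moments, and ensuring the convergence is strong enough (uniform integrability against $\psi$) to pass the \emph{norm}, not merely to get a one-sided estimate. For $\psi(t)=t^p$ this is classical; for a general continuous nondecreasing $\psi$ vanishing only at $0$ one must argue by approximating $\psi$ by polynomials on compact intervals and using that $\abs{x}$ is bounded, so the eigenvalues $\lambda_j(T_n(x))$ are uniformly bounded by $\norm{x}_\infty$, which tames the limit. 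The non-self-adjoint reduction (step (i)) and the bookkeeping that $S_\ph$ commutes with finite truncation up to nothing (step (iii)) I expect to be routine.
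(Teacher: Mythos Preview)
Your plan is correct and matches the paper's: Lemma~\ref{trprp}$(b)$ uses the block-matrix Szeg\H{o} limit theorem for the Fourier~$\le$~Schur direction (applied to $y=x^*x$ rather than to a self-adjoint dilation, but this is cosmetic), and Lemma~\ref{trans1} handles Schur~$\le$~Fourier. For the latter the paper needs no averaging: conjugation by the unitary $\sum_\gamma \e_{\gamma,\gamma}\otimes\la_\gamma$ gives a direct isometric embedding of all of $\Sch^\psi_{\HLa}(\Sch^\psi)$ (not just block-Toeplitz elements) into $\Ell^\psi_\La(\tr\otimes\tr\otimes\tau)$ intertwining $\Schur_{\Hph}$ with $\Id\otimes\Fourier_\ph$, which is slightly cleaner than the Reiter/Fej\'er route you sketch.
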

See Theorems \ref{trprp}~and~\ref{trans2} for the precise statement in the general
case of an amenable group~$\Ga$.

An application of this theorem to the class of all unimodular Fourier
multipliers yields a transfer of lacunary subsets into lacunary matrix
patterns. Call~$\La$ \emph{unconditional in~$\Ell^p$} if
$(z^k)_{k\in\La}$ is an unconditional basis of~$\Ell^p_\La$, and
call~$\HLa$ \emph{unconditional in~$\Sch^p$} if the sequence
$(\e_q)_{q\in\HLa}$ of elementary matrices is an unconditional basis
of~$\Sch^p_{\mkern-3mu\HLa}$. These properties are also known as
$\Lambda(p)$ if $p>2$ ($\Lambda(2)$ if $p<2$) and $\sigma(p)$,
respectively; they have natural ``complete'' counterparts that are
also known as $\Lambda(p)_\cb$ if $p>2$ ($\mathrm{K}(p)_\cb$ if
$p\le2$) and $\sigma(p)_\cb$, respectively. (See Definitions
\ref{unclp:def}~and~\ref{uncsp:def}).
\begin{cor}
  Let $1\le p<\infty$. If\/ $\HLa$ is unconditional in\/~$\Sch^p$, then\/ $\La$ is
  unconditional in\/~$\Ell^p$. \ $\HLa$ is completely unconditional
  in\/~$\Sch^p$ if and only if\/ $\La$ is completely unconditional
  in\/~$\Ell^p$.
\end{cor}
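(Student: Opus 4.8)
The plan is to deduce this corollary from Theorem~1 applied to the family of unimodular Fourier multipliers $\ph=(\ph_k)_{k\in\La}$ with $\ph_k=\pm1$ (or $\ph_k\in\T$ for the complete versions). First I would recall the standard characterisation: $\La$ is unconditional in $\Ell^p$ precisely when
\[
  \sup\bignorm{\sum_{k\in\La}\eps_k x_k z^k}_{\Ell^p_\La}<\infty
\]
where the supremum runs over all signs $\eps_k=\pm1$ and all norm-one $x=\sum x_kz^k\in\Ell^p_\La$; equivalently, the relative Fourier multipliers~\eqref{rfmo} associated with sign sequences are uniformly bounded on $\Ell^p_\La$. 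Likewise $\HLa$ is unconditional in $\Sch^p$ iff the relative Schur multipliers~\eqref{rsmo} associated with sign patterns $\Hph_{r,c}=\eps_{r-c}$ are uniformly bounded on $\Sch^p_{\mkern-3mu\HLa}$ — note here that Toeplitz sign patterns are the only ones that arise, since $\Hph$ must be constant on the diagonals $r-c=k$. The same statements with $\T$ in place of $\{\pm1\}$ and with $\cb$-norms characterise the ``complete'' notions of Definitions~\ref{unclp:def} and~\ref{uncsp:def}.

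For the first assertion, suppose $\HLa$ is unconditional in $\Sch^p$, so the Toeplitz sign-pattern Schur multipliers are bounded on $\Sch^p_{\mkern-3mu\HLa}$ by some constant $C$. By Bo\.zejko--Fendler (the $p=\infty$ isometric statement) — or more simply by the inequality direction of Pisier's extension already recalled in the introduction, which gives that the complete Schur multiplier norm is bounded by the complete Fourier multiplier norm — we obtain that the corresponding Fourier multipliers on $\Ell^p_\La$ have norm at most $C$ as well, hence $\La$ is unconditional in $\Ell^p$. Strictly speaking, the clean way to package this is via Theorem~1 with $\psi(t)=t^p$: it gives equality of the Fourier and Schur multiplier norms on $\Ell^p_\La(\Sch^p)$ and $\Sch^p_{\mkern-3mu\HLa}(\Sch^p)$ respectively, so the \emph{complete} unconditionality constants coincide, and in particular finiteness of one forces finiteness of the other.

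For the equivalence of complete unconditionality, I would apply Theorem~1 in both directions to the family of all unimodular Toeplitz patterns. The forward direction is as above. For the converse, if $\La$ is completely unconditional in $\Ell^p$, then the unimodular relative Fourier multipliers are uniformly completely bounded on $\Ell^p_\La$, hence on $\Ell^p_\La(\Sch^p)$; Theorem~1 transfers this to uniform boundedness of the associated Toeplitz Schur multipliers on $\Sch^p_{\mkern-3mu\HLa}(\Sch^p)$, which is exactly complete unconditionality of $\HLa$ in $\Sch^p$. The only subtlety — and the main thing to get right — is the bookkeeping between the three norms (ordinary, complete, and the Orlicz-valued norm appearing in Theorem~1): one must observe that taking a supremum over the unimodular parameter $\ph$ commutes with all of these, so that the equality of individual multiplier norms granted by Theorem~1 upgrades to equality of the relevant unconditionality constants. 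This is why the first assertion is only an implication: without the $\cb$ hypothesis one only has the Bo\.zejko--Fendler/Pisier inequality at one's disposal, not the equality, and the reverse transfer (Schur $\Rightarrow$ Fourier for ordinary, non-complete, norms on $\Sch^p$) is not available.
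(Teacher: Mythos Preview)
There are two genuine gaps.

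\textbf{First gap: the inequality is quoted backwards.} For the first assertion you need that the (ordinary) Fourier multiplier norm on~$\Ell^p_\La$ is bounded by the (ordinary) Schur multiplier norm on~$\Sch^p_{\mkern-3mu\HLa}$. This is precisely the Szeg\H{o}-based direction (Lemma~\ref{trprp}$\,(a)$ in the paper): it is proved by truncating a Toeplitz operator and applying the Szeg\H{o} limit theorem. You instead invoke the Bo\.zejko--Fendler/Pisier inequality, which goes the \emph{other} way (complete Schur $\le$ complete Fourier, Lemma~\ref{trans1}); from $\norm{\Hph}_{\Schur}\le C$ that inequality tells you nothing about $\norm{\ph}_{\Fourier}$. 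Your closing remark that ``Schur $\Rightarrow$ Fourier for ordinary norms is not available'' is therefore exactly the wrong way round: Schur $\Rightarrow$ Fourier \emph{is} available (via Szeg\H{o}), and it is the reverse implication that fails without the cb hypothesis---which is precisely why the first assertion is only a one-way implication.

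\textbf{Second gap: unconditionality of~$\HLa$ quantifies over all sign patterns, not only Toeplitz ones.} Your claim that ``Toeplitz sign patterns are the only ones that arise'' is a misreading of Definition~\ref{uncsp:def}: for $\HLa$ to be (completely) unconditional in~$\Sch^p$ one must bound $\norm{\sum_{q\in\HLa}\eps_q x_q\e_q}$ for \emph{arbitrary} signs~$\eps_q$, with no constraint that $\eps_{r,c}$ depend only on~$r-c$. For the direction $\HLa$ (completely) unconditional $\Rightarrow$ $\La$ (completely) unconditional this is harmless, since arbitrary signs include the Toeplitz ones. But for the converse direction, $\La$ completely unconditional $\Rightarrow$ $\HLa$ completely unconditional, Theorem~1 alone is insufficient: it only controls Toeplitz Schur multipliers~$\Hph$. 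The paper closes this gap (proof of Proposition~\ref{trans:set}$\,(b)$) by invoking the isometric embedding $\Sch^p_{\mkern-3mu\HLa}(\Sch^p)\hookrightarrow\Ell^p_\La(\tr\otimes\tr\otimes\tau)$ from Lemma~\ref{trans1} together with Harcharras' Khinchin-type characterisation of complete unconditionality, which is what allows arbitrary sign patterns on~$\HLa$ to be handled through the $\Lambda(p)_{\cb}$ property of~$\La$.
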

See Proposition~\ref{trans:set} for the precise statement in the
general case of a discrete group~$\Ga$.

The two most prominent multipliers are the Riesz projection
and the Hilbert transform. The first consists in letting $\ph$~be the
indicator function of nonnegative integers and transfers into the
upper triangular truncation of matrices. The second corresponds to the
sign function and transfers into the Hilbert matrix transform. We
obtain the following partial results.

\begin{thm}\label{rhtrht}
  The norm of the matrix Riesz projection and of the matrix
  Hilbert transform on\/ $\Sch^\psi(\Sch^\psi)$ coincide with their norm on\/~$\Sch^\psi$.
  \begin{itemize}
  \item If\/ $p$~is a power of\/~2, then the norm of the matrix Hilbert
    transform on\/~$\Sch^p$ is\/ $\cot(\pi/2p)$.
  \item The norm of the matrix Riesz projection on\/~$\Sch^4$ is\/~$\sqrt2$.
  \end{itemize}
\end{thm}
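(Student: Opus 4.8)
The plan is to read the first assertion off the transfer theorem and then to pin down the two numerical values by combining it --- for the lower bounds --- with classical estimates on $\Ell^p(\T)$, and --- for the upper bounds --- with a noncommutative identity. The matrix Hilbert transform and the matrix Riesz projection are the Toeplitz Schur multipliers of the form \eqref{rsmo} attached to the Fourier multipliers $\sgn$ and $\un_{\Z^+}$ on $\T$; hence Theorem~\ref{trans2} with $\Ga=\Z$ and $\La=\Z$ identifies their norm on $\Sch^\psi(\Sch^\psi)$ with the norm of the Hilbert transform, respectively the Riesz projection, on $\Ell^\psi(\T;\Sch^\psi)$, and it remains to match the latter with the norm on $\Sch^\psi$. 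This amounts to saying that for these two symbols the complete norm equals the norm: the inequality $\ge$ is immediate, and $\le$ follows from the local embeddings of $\Ell^\psi$ into $\Sch^\psi$ of Section~\ref{sec:loc}. Moreover $\Ell^\psi(\T;\Sch^\psi)$ contains a copy of $\Ell^\psi(\T)$ on which these operators act as their scalar counterparts, so for $\psi(t)=t^p$ the norm on $\Sch^p$ is at least the norm of the scalar operator on $\Ell^p(\T)$: by Pichorides' theorem the latter is $\cot(\pi/2p)$ for the Hilbert transform when $p\ge2$, and it is $1/\sin(\pi/4)=\sqrt2$ for the Riesz projection when $p=4$. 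Thus only the matching upper bounds $\norm H_{\Sch^p}\le\cot(\pi/2p)$ for $p$ a power of $2$ and $\norm R_{\Sch^4}\le\sqrt2$ remain, $H$ and $R$ now denoting the matrix Hilbert transform and the matrix Riesz projection.

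For $H$ and $p=2^k$ I would induct on $k$. The case $p=2$ is trivial, a Schur multiplier with symbol of modulus $\le1$ being a contraction on the Hilbert space $\Sch^2$. For the step, after the standard $2\times2$ reduction to a self-adjoint $x\in\Sch^{2p}$ put $y=H(x)$; since the symbol $\sgn$ is real and odd, $y$ is anti-self-adjoint, say $y=\iu z$ with $z=z^*$. As $\sgn$ vanishes on the diagonal and is $\pm1$ off it, $f=x+y$ is triangular, hence so is $f^2=(x^2-z^2)+\iu(xz+zx)$, and comparing the diagonal parts of $H(f^2)$ and $H((f^*)^2)$ gives the noncommutative Cotlar identity $H(x^2-z^2)=\iu(xz+zx)$. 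Applying the inductive hypothesis on $\Sch^p$ to the self-adjoint $x^2-z^2$ yields $\norm{xz+zx}_p\le\cot(\pi/2p)\norm{x^2-z^2}_p$; combining this with the noncommutative Hölder inequality $\norm{f^2}_p\le\norm f_{2p}^2$, the rearrangement $z^2=x^2+\iu(xz+zx)-f^2$, and the positivity of $x^2$ and $z^2$, a careful estimate should produce the quadratic inequality $\norm z_{2p}^2\le2\cot(\pi/2p)\norm z_{2p}\norm x_{2p}+\norm x_{2p}^2$, whose positive root $\cot(\pi/2p)+\sqrt{1+\cot^2(\pi/2p)}=\cot(\pi/4p)$ is exactly the value at the exponent $2p$. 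Obtaining this quadratic \emph{without loss} --- a crude triangle inequality is hopelessly wasteful --- is the main difficulty, the constant $\cot(\pi/2p)$ being attained.

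For $R$ on $\Sch^4$ I would write $R=\tfrac12(\Id+D_0+H)$, $D_0$ being the diagonal truncation, and decompose a self-adjoint $x$ as $d+\ell+\ell^*$ with $d$ diagonal and $\ell$ strictly triangular, so that $R(x)=d+\ell=:g$. Both $\norm{R(x)}_4^4=\tr\bigl((g^*g)^2\bigr)$ and $\norm x_4^4=\tr(x^4)$ expand, in terms of matrix entries, into sums over closed $4$-cycles $r\to s\to t\to v\to r$ of $x_{rs}x_{st}x_{tv}x_{vr}$ weighted by products of the four relevant signs $\sgn(\cdot-\cdot)$ (together with lower-order diagonal contributions); this is where the exponent $4$ enters, the norm becoming a trace of a word of length $4$. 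Feeding in the identity $g^*g+gg^*=x^2+d^2-\ell^2-(\ell^*)^2$, the value $\norm H_{\Sch^4}=1+\sqrt2$ just obtained, the Cauchy--Schwarz inequality in $\Sch^2$, and the exact cancellations forced by the triangular structure --- for instance $\tr\bigl((x^2+d^2-\ell^2-(\ell^*)^2)[g^*,g]\bigr)=0$ by cyclicity --- the problem should reduce to an elementary comparison of the two families of sign-weights, yielding $\norm{R(x)}_4^4\le4\norm x_4^4$, that is the constant $\sqrt2$. Here too the obstacle is that this combinatorial comparison must be sharp; the rest is bookkeeping.
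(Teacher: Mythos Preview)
Your first assertion has a genuine gap. Theorem~\ref{trans2}$\,(a)$ gives $\|\Hph\|_{\Schur(\Sch^\psi(\Sch^\psi))}=\|\ph\|_{\Fourier(\Ell^\psi(\tr\otimes\tau))}$, and the inequality $\ge\|\Hph\|_{\Schur(\Sch^\psi)}$ is indeed trivial; but nothing in Section~\ref{sec:loc} (which only treats $\Ell^p$ for $p\ge1$, not general $\Ell^\psi$) yields the reverse bound $\|\ph\|_{\Fourier(\Ell^\psi(\tr\otimes\tau))}\le\|\Hph\|_{\Schur(\Sch^\psi)}$. The local embeddings produce $\|\ph\|_{\Fourier(\Ell^\psi)}\le\|\Hph\|_{\Schur(\Sch^\psi)}$ and $\|\ph\|_{\Fourier(\Ell^\psi(\tr\otimes\tau))}\le\|\Hph\|_{\Schur(\Sch^\psi(\Sch^\psi))}$, neither of which closes the loop. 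The paper bypasses transfer here entirely: Proposition~\ref{rhcb} observes that because the symbol is constant on each triangle, an element $a\in\Sch^\psi_m(\Sch^\psi_n)$ can be re-embedded as a single matrix $\tilde a$ (spacing out the $n\times n$ blocks so that the block upper triangle sits inside the scalar upper triangle) with $\Id_{\Sch^\psi_n}\otimes\Schur_\rh(a)=\Schur_\rh(\tilde a)$. This is the missing idea.

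For the Hilbert transform the paper argues on $\Ell^p(\tr\otimes\tau)$ rather than directly on $\Sch^p$: after restricting to the odd spectrum so that $\sgn$ is genuinely odd, and applying Paulsen's $2\times2$ off-diagonal trick to reduce to selfadjoint-valued $f$, Cotlar's identity $(\tilde Hf)^2+f^2=\tilde H\bigl((\tilde Hf)f+f(\tilde Hf)\bigr)$ together with $\|f^2\|_p=\|f\|_{2p}^2$ and $\|(\tilde Hf)^2\|_p=\|\tilde Hf\|_{2p}^2$ (both are normal) yields the quadratic $\|\tilde Hf\|_{2p}^2\le\|f\|_{2p}^2+2u_p\|f\|_{2p}\|\tilde Hf\|_{2p}$ without any loss---a plain triangle inequality suffices. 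Your matrix-side variant with $\sgn(0)=0$ does give the identity $H(x^2+y^2)=xy+yx$, but you then need to reconcile this convention with the one in the statement and to explain why $\|y^2\|_p=\|y\|_{2p}^2$; your ``careful estimate should produce'' is precisely the place where the paper's choice of setting makes the argument go through in one line.

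For the Riesz projection on $\Sch^4$ your combinatorial programme is far more intricate than needed. The paper's argument is four lines: for $x$ upper triangular and $y$ strictly lower triangular, $\Rt(xx^*)=\Rt((x+y)x^*)$ and $(\Id-\Rt)(xx^*)=(\Id-\Rt)(x(x+y)^*)$ give, by H\"older, $\|\Rt(xx^*)\|_2,\|(\Id-\Rt)(xx^*)\|_2\le\|x+y\|_4\|x\|_4$; since the two ranges are orthogonal in $\Sch^2$, $\|x\|_4^4=\|xx^*\|_2^2\le2\|x\|_4^2\|x+y\|_4^2$. No appeal to $\|H\|_{\Sch^4}$, no expansion over $4$-cycles.
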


The transfer technique lends itself naturally to the case where $\La$
contains a sumset $\Row+\Col$: if subsets $\Row'$~and~$\Col'$ are
extracted so that the $r+c$ with $r\in\Row'$ and $c\in\Col'$ are
pairwise distinct, they may play the role of rows and columns.  Here
are the consequences of the conditionality of the sequence of elementary
matrices $\e_{r,c}$ in~$\Sch^p$ for $p\ne2$ and of the unboundedness
of the Riesz transform on $\Sch^1$ and $\Sch^\infty$, respectively.

\begin{thm}If\/ $(z^k)_{k\in\La}$ is a completely unconditional basis of\/
  $\Ell^p_\La$ with\/ $p\ne2$, then\/ $\La$ does not contain sumsets\/
  $\Row+\Col$ of arbitrarily large sets. If either
  \begin{itemize}
  \item the space\/ $\Ell^1_\La$ admits some completely unconditional
    approximating sequence, or
  \item the space\/ $\Cont_\La$ of continuous functions with spectrum
    in\/ $\La$ admits some unconditional approximating sequence,
  \end{itemize}    
  then\/ $\La$ does not contain
  the sumset\/ $\Row+\Col$ of two infinite sets.
\end{thm}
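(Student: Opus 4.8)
The plan is to argue by contrapositive for each bullet, exploiting the transfer technique in the form of the Corollary (Proposition~\ref{trans:set}) and Theorem~\ref{rhtrht}, together with the following extraction principle: if $\La\supseteq\Row+\Col$ with $\Row,\Col$ finite (or infinite), one may pass to infinite subsets $\Row'\subseteq\Row$, $\Col'\subseteq\Col$ (or keep them of prescribed finite size) such that the sums $r+c$ with $r\in\Row'$, $c\in\Col'$ are pairwise distinct; this is a routine Ramsey/greedy argument (indeed for $\Z$ one can take arbitrary $\Row'$ and a sufficiently lacunary $\Col'$). Having done this, the map $\e_{r,c}\mapsto z^{r+c}$ identifies the closed span of $(\e_{r,c})_{r\in\Row',c\in\Col'}$ in $\Sch^p_{\mkern-3mu\HLa}$ with a subspace of $\Ell^p_\La$, isometrically and completely isometrically, because $\{(r',c'):r'\in\Row',c'\in\Col'\}$ is precisely the pattern $\HLa$ restricted to these rows and columns and the identification respects the Toeplitz structure. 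Thus any unconditionality or approximation property assumed on the Fourier side is inherited by such rectangular submatrix patterns on the Schur side, uniformly in the size of $\Row'$ and $\Col'$.

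For the first assertion, suppose $(z^k)_{k\in\La}$ is a completely unconditional basis of $\Ell^p_\La$ with $p\ne2$, and suppose for contradiction that $\La$ contains sumsets $\Row_n+\Col_n$ with $\card\Row_n,\card\Col_n\to\infty$. Extracting as above, we get $n\times n$ rectangular patterns on which the elementary matrices $(\e_{r,c})$ are completely unconditional in $\Sch^p$ with a constant bounded by the complete unconditionality constant of $(z^k)_{k\in\La}$. But the complete unconditionality constant of the full $n\times n$ system of elementary matrices in $\Sch^p$ is known to blow up as $n\to\infty$ when $p\ne2$ (this is the quantitative form of the conditionality of $(\e_{r,c})_{r,c}$ in $\Sch^p$ for $p\ne2$, essentially because $\Sch^p$ does not have an unconditional basis of that shape unless $p=2$); this contradiction proves the claim.

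For the two bulleted statements one argues the same way but with approximating sequences in place of bases, and invokes Theorem~\ref{rhtrht} to produce the contradiction. If $\La$ contains $\Row+\Col$ with $\Row,\Col$ both infinite, extract infinite $\Row',\Col'$ with distinct sums; then $\Ell^1_{\Row'+\Col'}$ embeds completely isometrically as the Toeplitz-type subspace $\Sch^1_{\Row'\times\Col'}$ of $\Sch^1$, and a completely unconditional approximating sequence for $\Ell^1_\La$ restricts to one for this subspace. Passing to the limit along such a sequence would make the identity on $\Sch^1_{\Row'\times\Col'}$ a pointwise limit of uniformly completely bounded finite-rank maps of a special (unconditional) form; composing with coordinate projections onto triangular sub-patterns, this would force the matrix Riesz projection to be bounded on $\Sch^1$ (on the infinite rectangle $\Row'\times\Col'$, the ``upper triangular'' truncation is exactly a restriction of coordinate projections one controls), contradicting the unboundedness of the Riesz transform on $\Sch^1$ stated before Theorem~\ref{rhtrht}. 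The continuous-function case is identical with $\Cont_\La$ in place of $\Ell^1_\La$ and $\Sch^\infty$ in place of $\Sch^1$, using the unboundedness of the Riesz transform on $\Sch^\infty$ and the fact that an unconditional approximating sequence on the Fourier side transfers, via Proposition~\ref{trans:set}, to one for the corresponding subspace of $\Sch^\infty$.

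The main obstacle is the passage from an \emph{abstract} unconditional approximating sequence on $\Ell^1_\La$ (resp.\ $\Cont_\La$) to the \emph{boundedness of the Riesz projection} on the infinite rectangular pattern: one must check that the transferred approximating sequence can be post-composed with the (unbounded, but densely defined) triangular truncation so that the uniform bound survives, i.e.\ that unconditionality of the approximating maps lets one insert the sign pattern of the Riesz projection at bounded cost on each finite truncate, and then let the truncate grow. Making this uniform-in-$n$ estimate precise—ruling out that the unconditionality constants and the approximation constants conspire to grow together—is the crux; everything else is the now-standard transfer machinery of the earlier sections plus the combinatorial extraction of a Sidon-type rectangle inside $\Row+\Col$.
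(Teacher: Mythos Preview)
Your argument for the first assertion is correct and matches the paper's Proposition~\ref{har:lp}: extract a rectangle with pairwise distinct sums, embed $\Sch^p_n$ isometrically into $\Ell^p(\tr\otimes\tau)$ via $\e_{i,j}\mapsto\e_{i,j}\otimes\la_{r_i+c_j}$, and invoke the growth $n^{|1/2-1/p|}$ of the unconditional constant of elementary matrices in $\Sch^p_n$.

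For the two bulleted assertions, however, there is a genuine gap---and you have honestly flagged it as ``the main obstacle.'' The difficulty is real and your sketch does not overcome it. First, the embedding runs the wrong way: it is $\Sch^p_{\Row'\times\Col'}$ that embeds (completely) isometrically into $\Ell^p_\La(\tr\otimes\tau)$, not $\Ell^1_{\Row'+\Col'}$ into $\Sch^1$. Second, and more seriously, an approximating sequence $(T_k)$ on $\Ell^1_\La$ (or $\Cont_\La$) does \emph{not} restrict to an approximating sequence on any prescribed subspace: the maps $T_k$ need not preserve the span of $\{\la_{r+c}\}$, so there is nothing to ``transfer'' to the Schur side. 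Your proposed post-composition with triangular truncations does not help, because the truncations themselves are the unbounded objects you are trying to control.

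The missing idea is the \emph{skipped block sum} construction of Lemma~\ref{diag}. One does not fix $\Row',\Col'$ in advance and then study $(T_k)$; instead one builds the sequences $(r_i),(c_j)$ and the indices $l_1<k_2<l_2<\dots$ \emph{simultaneously} by induction, exploiting at each step that $T_k$ has finite rank (so a new $r_{n+1}$ can be chosen with $U_n(\la_{r_{n+1}+c_j})\approx0$) and that $T_k\to\Id$ pointwise (so $k_{n+1},l_{n+1}$ can be chosen to make the next block act correctly). The resulting operator $U_n=T_{l_1}+\sum_{m=2}^n(T_{l_m}-T_{k_m})$ is a mean of two signed sums of the form~\eqref{cuap:eq}, hence controlled by the unconditional constant, and it acts on the embedded copy of $\Sch^p_n$ approximately as the Riesz projection. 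Letting $n\to\infty$ yields the contradiction. For $\Cont_\La$ one first replaces $(T_k)$ by Fourier multipliers (Proposition~\ref{fm}), which are automatically completely bounded since $\Z$ is amenable---this is why mere unconditionality suffices in that case.
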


The proof of the second part of this theorem consists in constructing
infinite subsets $\Row'$~and~$\Col'$ and skipped block
sums $\sum(T_{k_{j+1}}-T_{k_j})$ of a given approximating sequence that
act like the projection on the ``upper triangular'' part
of~$\Row'+\Col'$.  See Proposition~\ref{har:lp} and
Theorem~\ref{T} for the precise statement in the general case of a
discrete group $\Ga$.

In the case of quasi-normed Schatten-von-Neumann classes $\Sch^p$ with
$p<1$, the transfer technique yields a new proof for the following
result of Alexey Alexandrov and Vladimir Peller.

\begin{thm}
  Let\/~$0<p<1$. The Fourier multiplier\/ $\ph$ is contractive on\/~$\Ell^p$
  or on\/~$\Ell^p(\Sch^p)$ if and only if the Schur multiplier\/~$\Hph$ is
  contractive on\/~$\Sch^p$ or on\/~$\Sch^p(\Sch^p)$ if and only if the
  sequence\/~$\ph$ is the Fourier transform of an atomic measure of the
  form\/ $\sum a_g \delta_{g}$ on\/~$\T$ with\/~$\sum\abs{a_g}^p\le 1$.
\end{thm}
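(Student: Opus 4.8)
The plan is to traverse the equivalences starting from the cheapest implication, namely $(3)\Rightarrow$ contractivity. Assume $\ph=\hat\mu$ with $\mu=\sum_g a_g\delta_g$ and $\sum_g\abs{a_g}^p\le1$; since $p<1$ and $\abs{a_g}\le\bigl(\sum_h\abs{a_h}^p\bigr)^{1/p}\le1$, we have $\abs{a_g}\le\abs{a_g}^p$, so $\mu$ is a finite measure and all series below converge in operator quasi-norm. On trigonometric polynomials $M_\ph$ is convolution by $\mu$, that is $M_\ph=\sum_g a_g\tau_g$ with $\tau_g$ the translation by~$g$, and $\Hph=\sum_g a_g\Delta_g$ with $\Delta_g(x)=D_gxD_g^*$, $D_g=\diag\klammer{g^r}_r$, because $\Hph_{r,c}=\ph_{r-c}=\sum_g a_g\,g^r\overline{g^c}$. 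Each $\tau_g$ is an isometry of $\Ell^p$ and of $\Ell^p(\Sch^p)$, and each $\Delta_g$, being conjugation by a diagonal unitary, is an isometry of $\Sch^p$ and of $\Sch^p(\Sch^p)$. As all four spaces are $p$-normed, a superposition $\sum_g a_g T_g$ of isometries $T_g$ obeys $\bignorm{\sum_g a_g T_g x}^p\le\sum_g\abs{a_g}^p\norm x^p\le\norm x^p$; hence $M_\ph$ and $\Hph$ are contractions of all four spaces, which settles the ``if'' direction in each of its forms.

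For the converse I would use transfer to collapse the four hypotheses into one. By Theorem~\ref{trans2} applied with $\psi(t)=t^p$ --- continuous, nondecreasing, vanishing only at~$0$ --- and $\La=\Ga=\Z$, the norm of $M_\ph$ on $\Ell^p(\Sch^p)$ equals the norm of $\Hph$ on $\Sch^p(\Sch^p)$, and both dominate the corresponding scalar norms; the Szeg\H o mechanism behind Theorem~\ref{sz} (Toeplitz truncation embeds $\Ell^p$ locally into $\Sch^p$, while the restriction of a Schur multiplier to Toeplitz matrices is the Fourier multiplier) identifies the scalar norms of $M_\ph$ on $\Ell^p$ and of $\Hph$ on $\Sch^p$ as well. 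Thus all four contractivity hypotheses coincide, and it remains to prove: \emph{if $0<p<1$ and $M_\ph$ is a contraction of $\Ell^p(\T)$, then $\ph=\hat\mu$ for an atomic measure $\mu=\sum_g a_g\delta_g$ with $\sum_g\abs{a_g}^p\le1$.} I would separate this into a quantitative statement --- the $\ell^p$-bound on the atoms of a discrete convolving measure --- and a qualitative one --- that the convolving measure is discrete.

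The quantitative statement is the ``if'' direction run backwards. For a finite set $F$ of well-separated atoms, testing $M_\ph=\mu*{\cdot}$ on the indicator $\un_I$ of an arc shorter than the gaps within $F$ makes the translates $\tau_g\un_I$, $g\in F$, disjoint, so $\norm{\mu_F*\un_I}_p^p=\bigl(\sum_{g\in F}\abs{a_g}^p\bigr)\abs I$, while the tail $\klammer{\mu-\mu_F}*\un_I$ is bounded uniformly by $\eps_F:=\norm{\mu-\mu_F}$; contractivity and the reverse quasi-triangle inequality then give $\sum_{g\in F}\abs{a_g}^p\le1+\eps_F^p/\abs I$. When the atoms accumulate this needs supplementing --- e.g.\ by first applying a dilation $z\mapsto z^n$, a group endomorphism of $\T$ under which $M_\ph$ restricts to the multiplier $\klammer{\ph_{nk}}_k$ on $\Ell^p(\T)$, to pull finitely many atoms apart --- and the delicacy of controlling the tail against the available separation is precisely what makes this part of a theorem rather than an exercise; on the Schur side the bound instead drops out of $\norm{\Hph\circ J_N}_{\Sch^p}\le\norm{J_N}_{\Sch^p}=N$ for the $N\times N$ all-ones matrix $J_N$, since its Schur transform is $\sum_g a_g\,v_g v_g^*$ with $v_g=\klammer{g^r}_{r=1}^N$ and distinct characters are asymptotically orthogonal, whence $N^{-p}\norm{\Hph\circ J_N}_{\Sch^p}^p\to\sum_g\abs{a_g}^p$.

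The qualitative statement is the step I expect to be the main obstacle: a translation-invariant contraction of $\Ell^p(\T)$, $0<p<1$, must be convolution by a \emph{discrete} measure. Here the quasi-Banach setting removes the convex-analytic tools available for $p\ge1$, yet it also furnishes the rigidity one needs, since $\Ell^p$ has trivial dual: a non-zero absolutely continuous convolver $g*{\cdot}$ is already unbounded, because the concentrated approximate identities $f_N=N\un_{I_N}$ with $\abs{I_N}=1/N$ satisfy $\norm{f_N}_p\to0$ while $\norm{g*f_N}_p\to\norm g_p\neq0$, and the singular non-atomic part of a convolving measure is excluded in the same spirit through its infinite upper density on its Lebesgue-null support. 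Completing this concentration analysis --- or, for this single step, quoting the structure theorem of Alexandrov and Peller for bounded Toeplitz--Schur multipliers of $\Sch^p$ --- closes the circle: $(3)$ gives all four contractivities, each of these reduces through Theorems~\ref{trans2}~and~\ref{sz} to $M_\ph$ being a contraction of $\Ell^p(\T)$, and that in turn forces~$(3)$.
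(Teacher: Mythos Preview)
Your architecture matches the paper's: establish $(3)\Rightarrow$ all four contractivities by the $p$-triangle inequality, use transfer to reduce any of the four hypotheses to contractivity of $\Fourier_\ph$ on $\Ell^p(\T)$, and then close the loop. Two points deserve correction.

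First, a minor overstatement. You write that the Szeg\H{o} mechanism ``identifies the scalar norms of $\Fourier_\ph$ on $\Ell^p$ and of $\Hph$ on $\Sch^p$ as well,'' and conclude that ``all four contractivity hypotheses coincide.'' Lemma~\ref{trprp}$\,(a)$ gives only one inequality, $\|\ph\|_{\Fourier(\Ell^p)}\le\|\Hph\|_{\Schur(\Sch^p)}$; the reverse is not available until the loop is closed through~$(3)$. What transfer actually gives you at this stage is that each of the four hypotheses implies contractivity on $\Ell^p$ --- which is all you need --- not that the four norms are equal. The equality $\|\Hph\|_{\Schur(\Sch^p)}=\|\Hph\|_{\Schur(\Sch^p(\Sch^p))}$ for $p<1$ is in fact recorded in the paper (Remark~\ref{b-cb}) only as a \emph{consequence} of the full corollary.

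Second, and this is the real gap: the paper does not attempt the step ``$\Fourier_\ph$ contractive on $\Ell^p(\T)$ $\Rightarrow$ $(3)$'' by hand. It cites Oberlin's theorem \cite{Ober76}, which characterises bounded translation-invariant operators on $\Ell^p(G)$ for $0<p<1$ on any locally compact abelian group as convolution by discrete measures in $\ell^p$. Your sketch of this step is incomplete --- the exclusion of a singular continuous part is asserted (``in the same spirit through its infinite upper density'') but not argued, and this is exactly the nontrivial content of Oberlin's paper. Your proposed fallback, ``quoting the structure theorem of Alexandrov and Peller for bounded Toeplitz--Schur multipliers of $\Sch^p$,'' is misplaced on two counts: their theorem is the implication $(d)\Rightarrow(3)$, not $(b)\Rightarrow(3)$, and you have only reduced to $(b)$; more importantly, the paper's entire purpose in this corollary is to give a \emph{new} proof of Alexandrov--Peller by routing through Oberlin, so invoking Alexandrov--Peller here would be circular. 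Replace that fallback with a citation of Oberlin and your argument becomes the paper's.
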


The emphasis put on \emph{relative} Schur multipliers motivates the
natural question of how the norm of an elementary Schur multiplier,
that is, a rank~1 matrix $(\rh_{r,c})=(x_ry_c)$, gets affected when the
action of~$\rh$ is restricted to matrices with a given support. The
surprising answer is the following theorem.
\begin{thm}
  \label{imps}
  Let\/ $I\subseteq\Row\times\Col$ and consider\/
  $(x_\row)_{\row\in\Row}$~and\/~$(y_\col)_{\col\in\Col}$. The relative
  Schur multiplier on\/~$\Sch^\infty_I$ given by\/ $(x_\row
  y_\col)_{(\row,\col)\in I}$ has norm\/~$\sup_{(\row,\col)\in
    I}\abs{x_\row y_\col}$.
\end{thm}

Let us finally describe the content of this
article. Section~\ref{sec:tr} develops transfer techniques for
Fourier and Schur multipliers provided by a block matrix Szeg\H{o}
limit theorem. This theorem provides local embeddings of $\Ell^\psi$
into $\Sch^\psi$. Section~\ref{sec:loc} shows how interpolation may be
used to define such embeddings for the scale of $\Ell^p$
spaces. Section~\ref{sec:lac} is devoted to the transfer of lacunary
sets into lacunary matrix patterns; the unconditional constant of a
set $\La$ is related to the size of the sumsets it
contains. Section~\ref{sec:tsp1} deals with Toeplitz Schur multipliers
for $p<1$ and comments on the case $p\ge1$. The Riesz projection and
the Hilbert transform are studied in Section~\ref{sec:rpht}. In
Section~\ref{sec:uap}, the presence of sumsets in a spectrum~$\La$ is
shown to be an obstruction for the existence of completely
unconditional bases for $\Ell^p_\La$. The last section provides a
norm-preserving extension for partially specified rank~1 Schur
multipliers.
 
\begin{NotTer}
  Let~$\T=\{z\in\C:|z|=1\}$ be the circle. 
  
  Given an index set~$\Col$ and~$c\in \Col$, \ $\e_c$~is the sequence
  defined on~$\Col$ as the indicator function~$\un_{\{c\}}$ of the
  singleton~$\{c\}$, so that $(\e_c)_{c\in \Col}$~is the canonical
  Schauder basis of the Hilbert space of square summable sequences
  indexed by~$\Col$, denoted by~$\ell^2_\Col$. We will use the
  notation~$\ell^2_n=\ell^2_{\{1,2,\dots,n\}}$
  and~$\ell^2=\ell^2_{\N}$.

  Given a product set~$I=\Row\times\Col$ and~$q=(r,c)$, the indicator
  function~$\e_q=\e_{r,c}$ is the \emph{elementary matrix} identified
  with the linear operator from~$\ell^2_\Col$ to~$\ell^2_\Row$ that
  maps~$\e_\col$ to~$\e_\row$ and all other basis vectors to~0. The
  \emph{matrix coefficient} at coordinate~$q$ of a linear operator~$x$
  from~$\ell^2_\Col$ to~$\ell^2_\Row$ is~$x_q=\tr\e_q^*x$, and its
  \emph{matrix representation}
  is~$(x_q)_{q\in\Row\times\Col}=\sum_{q\in\Row\times \Col}x_q\e_q$.
  The \emph{support} or \emph{pattern} of~$x$
  is~$\{q\in\Row\times\Col:x_q\ne0\}$.

  The space of all bounded operators from~$\ell^2_\Col$
  to~$\ell^2_\Row$ is denoted by~$\borne(\ell^2_\Col,\ell^2_\Row)$,
  and its subspace of compact operators is denoted by~$\Sch^\infty$.

  Let $\psi\colon\R^+\to\R^+$ be a continuous nondecreasing function
  vanishing only at~0. The \emph{Schatten-von-Neumann-Orlicz
    class}~$\Sch^\psi$ is the space of those compact operators~$x$
  from~$\ell^2_\Col$ to~$\ell^2_\Row$ such that
  $\tr\psi(|x|/a)<\infty$ for some $a>0$. If $\psi$~is convex, then
  $\Sch^\psi$~is a Banach space for the norm given by
  $\|x\|_{\Sch^\psi}=\inf\{a>0:\tr\psi(|x|/a)\le1\}$. Otherwise,
  $\Sch^\psi$ is a Fr\'echet space for the F-norm given by
  $\|x\|_{\Sch^\psi}=\inf\{a>0:\tr\psi(|x|/a)\le a\}$ (see
  \cite[Chapter~3]{or92}). This space may also be constructed as the
  noncommutative Lebesgue-Orlicz space~$\Ell^\psi(\tr)$ associated with
  a corner of the von~Neumann algebra~$\borne(\ell^2_\Col\oplus 
\ell^2_\Row)$ endowed
  with the normal, faithful, semifinite trace~$\tr$.  If $\psi$~is the
  power function~$t\mapsto t^p$, this space is denoted~$\Sch^p$; if
  $p\ge1$, then $\|x\|_{\Sch^p}={(\tr|x|^p)}^{1/p}$; if $p<1$, then
  $\|x\|_{\Sch^p}={(\tr|x|^p)}^{1/(1+p)}$.

  If~$\card\Col=\card\Row=n$, then $\borne(\ell^2_\Col,\ell^2_\Row)$
  identifies with the space of $n\times n$ matrices denoted~$\mat_n$,
  and we write~$\Sch^\psi_n$ for~$\Sch^\psi$. Let
  $(\Row_n\times\Col_n)$~be a sequence of finite sets such that each
  element of~$\Row\times\Col$ eventually is
  in~$\Row_n\times\Col_n$. Then the sequence of operators~$P_n\colon
  x\mapsto\sum_{q\in\Row_n\times \Col_n}x_q\e_q$ tends pointwise to
  the identity on~$\Sch^\psi$.
  
  For~$I\subseteq\Row\times\Col$, we define the space~$\Sch^\psi_I$ as
  the closed subspace of~$\Sch^\psi$ spanned by~$(\e_q)_{q\in I}$; 
  this coincides with the subspace of those~$x\in \Sch^\psi$ whose
  support is a subset of~$I$.
  
  A \emph{relative Schur multiplier} on~$\Sch^\psi_I$ is a
  sequence~$\rh=(\rh_q)_{q\in I}\in\C^I$ such that the associated
  \emph{Schur multiplication operator}~$\Schur_\rh$ defined
  by~$\e_q\mapsto\rh_q\e_q$ for~$q\in I$ is bounded on~$\Sch^\psi_I$.
  The norm~$\|\rh\|_{\Schur(\Sch^\psi_I)}$ of~$\rh$ is defined as the
  norm of~$\Schur_\rh$.  This norm is the supremum of the norm of its
  restrictions to finite rectangle sets~$\Row'\times\Col'$.  We used
  \cite{pi98,pi01} as a reference.
  
  Let $\Ga$~be a discrete group with identity~$\id$. The \emph{reduced
    $\Cont^*$-algebra} of~$\Ga$ is the closed subspace spanned by the
  left translations~$\la_\ga$ (the linear operators defined
  on~$\ell^2_\Ga$ by~$\la_\ga\e_\beta=\e_{\ga\beta}$)
  in~$\borne(\ell^2_\Ga)$; we denote it by~$\Cont$, set in roman
  type. The \emph{von~Neumann algebra} of~$\Ga$ is its
  $\textrm{weak}^*$ closure, endowed with the normal, faithful,
  normalised finite trace~$\tau$ defined by~$\tau(x)=x_{\id,\id}$; we
  denote it by~$\vn$.  Let $\psi\colon\R^+\to\R^+$ be a continuous
  nondecreasing function vanishing only at~0. We define the
  noncommutative Lebesgue-Orlicz space~$\Ell^\psi$ of $\Ga$ as the
  completion of $\vn$ with respect to the norm given by
  $\|x\|_{\Ell^\psi}=\inf\{a>0:\tau(\psi(|x|/a))\le1\}$ if $\psi$~is
  convex, and with respect to the F-norm given by
  $\|x\|_{\Ell^\psi}=\inf\{a>0:\tau(\psi(|x|/a))\le a\}$ otherwise.
  If $\psi$~is the power function~$t\mapsto t^p$, this space is
  denoted~$\Ell^p$; if $p\ge1$, then
  $\|x\|_{\Ell^p}={\tau(|x|^p)}^{1/p}$; if $p<1$, then
  $\|x\|_{\Ell^p}={\tau(|x|^p)}^{1/(1+p)}$.  The \emph{Fourier
    coefficient} of~$x$ at~$\ga$
  is~$x_\ga=\tau(\la_\ga^*x)=x_{\ga,\id}$ and its \emph{Fourier
    series} is~$\sum_{\ga\in\Ga}x_\ga\la_\ga$. The \emph{spectrum} of
  an element~$x$ is~$\{\ga\in\Ga:x_\ga\ne0\}$. Let $X$~be the
  $\Cont^*$-algebra~$\Cont$ or the space~$\Ell^\psi$ and
  let~$\La\subseteq\Ga$; then we define $X_\La$~as the closed subspace
  of~$X$ spanned by the~$\la_\ga$ with~$\ga\in\La$. We skip the
  general question of when this coincides with the subspace of
  those~$x\in X$ whose spectrum is a subset of~$\La$, but note that
  this is the case if $\Ga$ is an amenable group (or if $\Ga$ has the
  AP and $\vn$~has the QWEP by \cite[Theorem~4.4]{jr03}) and $\psi$~is
  the power function~$t\mapsto t^p$. Note also that our definition of
  $X_\La$ makes it a subspace of the \emph{heart} of $X$: if $x\in
  X_\La$, then $\tau(\psi(|x|/a))$ is finite for all $a>0$.

  A \emph{relative Fourier multiplier} on~$X_\La$ is a
  sequence~$\ph=(\ph_\ga)_{\ga\in\La}\in\C^\La$ such that the
  associated Fourier multiplication operator~$\Fourier_\ph$ defined
  by~$\la_\ga\mapsto\ph_\ga\la_\ga$ for~$\ga\in\La$ is bounded
  on~$X_\La$. The norm~$\|\ph\|_{\Fourier(X_\La)}$ of~$\ph$ is defined
  as the norm of~$\Fourier_\ph$. 
  Fourier multipliers on the whole of the $\Cont^*$-algebra $\Cont$
  are also called \emph{multipliers of the Fourier algebra ${\mathrm
      A}(\Ga)$} (which may be identified with $\Ell^1$); they form the
  set $\Fourier(\mathrm{A}(\Ga))$.

  The space~$\Sch^\psi(\Sch^\psi)$ is the space of those compact
  operators~$x$ from~$\ell^2\otimes\ell^2_\Col$
  to~$\ell^2\otimes\ell^2_\Row$ such that
  $\|x\|_{\Sch^\psi(\Sch^\psi)}=\inf\{a:\tr\otimes\tr\psi(|x|/a)\le1\}$:
  it is the noncommutative Lebesgue-Orlicz
  space~$\Ell^\psi(\tr\otimes\tr)$ associated with a corner of the von~Neumann
  algebra~$\borne(\ell^2)\otimes\borne(\ell^2_\Col\oplus \ell^2_R)$. One may
  think of~$\Sch^\psi(\Sch^\psi)$ as the $\Sch^\psi$-valued
  Schatten-von-Neumann class; we define the matrix coefficient of~$x$
  at~$q$ by~$x_q = (\Id_{\Sch^\psi}\otimes\tr)\allowbreak
  \bigl((\Id_{\ell^2}\otimes\e_q^*)x\bigr)\in\Sch^\psi$ and its matrix
  representation by $\sum_{q\in\Row\times\Col}x_q\tens\e_q$. The
  support of~$x$ and the subspace~$\Sch^\psi_I(\Sch^\psi)$ are defined
  in the same way as~$\Sch^\psi_I$.
  
  Similarly, the space~$\Ell^\psi(\tr\otimes\tau)$ is the noncommutative
  Lebesgue-Orlicz space associated with the von~Neumann algebra~$\borne(\ell^2)\otimes\vn=\Ell^\infty(\tr\otimes\tau)$. One may think of~$\Ell^\psi(\tr\otimes\tau)$ as the $\Sch^\psi$-valued noncommutative
  Lebesgue space; we define the Fourier coefficient of~$x$ at~$\ga$
  by~$x_\ga=(\Id_{\Sch^\psi}\otimes\tau)\bigl((\Id_{\ell^2}\otimes\la_\ga^*)x\bigr)\in\Sch^\psi$
  and its Fourier series
  by~$\sum_{\ga\in\Ga}x_\ga\tens\la_\ga$; the spectrum
  of~$x$ is defined accordingly. The
  subspace~$\Ell^\psi_\La(\tr\otimes\tau)$ is the closed subspace
  of~$\Ell^\psi(\tr\otimes\tau)$ spanned by
  the~$x\otimes\la_\ga$ with~$x\in\Sch^\psi$
  and~$\ga\in\La$. 
  
  An operator~$T$ on~$\Sch^\psi_I$ is \emph{bounded on
    $\Sch^\psi_I(\Sch^\psi)$} if the linear operator $\Id_{\Sch^\psi}\otimes T$
  defined by~$x\tens y\mapsto x\tens T(y)$ for~$x\in \Sch^\psi$ and $y$
  in~$\Sch^\psi_I$ on finite tensors extends to a bounded
  operator~$\Id_{\Sch^\psi}\otimes T$ on~$\Sch^\psi_I(\Sch^\psi)$.  The norm of
  a Schur multiplier~$\rh$ on $\Sch^\psi_I(\Sch^\psi)$ is defined as the
  norm of~$\Id_{\Sch^\psi}\otimes\Schur_\rh$.
  Similar definitions hold for an operator~$T$ on~$\Ell^\psi_\La$; the
  norm of a Fourier multiplier~$\ph$ on~$\Ell^\psi_\La(\tr\otimes\tau)$
  is the norm of $\Id_\Sch^\psi\otimes\Fourier_\ph$
  on~$\Ell^\psi_\La(\tr\otimes\tau)$.

  Let $\psi$~be the power function~$t\mapsto t^p$ with $p\ge1$; the
  norms on $\Sch^p(\Sch^p)$ and $\Ell^p(\tr\otimes\tau)$ describe the
  canonical operator space structure on $\Sch^p$~and~$\Ell^p$,
  respectively (see \cite[Corollary~1.4]{pi98}); we should rather use
  the notation $\Sch^p[\Sch^p]$ and $\Sch^p[\Ell^p]$. This explains
  the following terminology.  An operator~$T$ on~$\Sch^p_I$ is
  \emph{completely bounded} (c.b.) if $\Id_{\Sch^p}\otimes
  T$~is bounded on~$\Sch^p_I(\Sch^p)$; the norm
  of~$\Id_{\Sch^p}\otimes T$ is the \emph{complete norm} of~$T$
  (compare \cite[Lemma~1.7]{pi98}).  The complete
  norm~$\|\rh\|_{\Schur_\cb(\Sch^p_I)}$ of a Schur multiplier~$\rh$ is
  defined as the complete norm of~$\Schur_\rh$. Note that the complete
  norm of a Schur multiplier~$\rh$ on~$\Sch^\infty_I$ is equal to its
  norm (\cite[Theorem~3.2]{pps89}): \
  $\|\rh\|_{\Schur_\cb(\Sch^\infty_I)}=\|\rh\|_{\Schur(\Sch^\infty_I)}$.
  The complete norm~$\|\ph\|_{\Fourier_\cb(\Ell^p_\La)}$ of a Fourier
  multiplier~$\ph$ is defined as the complete norm of~$\Fourier_\ph$.
  The complete norm of an operator~$T$ on~$\Cont_\La$ is the norm
  of~$\Id_{\Sch^\infty}\otimes T$ on the subspace
  of~$\Sch^\infty\otimes\Cont$ spanned by the~$x\otimes\la_\ga$
  with~$x\in\Sch^\infty$ and~$\ga\in\La$. In the case $\La=\Ga$, $\ph$
  is also called a \emph{c.b.\ multiplier of the Fourier algebra
    ${\mathrm A}(\Ga)$} and one writes $\ph\in\Fourier_{\cb}(\mathrm
  A(\Ga))$. If $\Ga$~is amenable, the complete norm of a Fourier
  multiplier~$\ph$ on~$\Cont_\La$ is equal to its norm: \
  $\|\ph\|_{\Fourier_\cb(\Cont_\La)}=\|\ph\|_{\Fourier(\Cont_\La)}$
  (this follows from \cite[Corollary~1.8]{dh85} as shown by the proof of Theorem~\ref{trans2}$\,(c)$).

  An element whose norm is at most~1 is
  \emph{contractive}, and if its complete norm is at most~1, it is
  \emph{completely} contractive.

  If $\Ga$~is abelian, let $G$~be its dual group and endow it with its
  unique normalised Haar measure~$m$. Then the Fourier transform
  identifies the $\Cont^*$-algebra~$\Cont$ as the space of continuous functions on~$G$, \
  $\Ell^\infty$ as the space of classes of bounded measurable
  functions on~$(G,m)$, \ $\Ell^\psi$ as the Lebesgue-Orlicz space of classes
  of~$\psi$-integrable functions on~$(G,m)$, \ $\tau(x)$
  as~$\int_Gx(g)\,\mathrm dm(g)$, \ $\Ell^\psi(\tr\otimes\tau)$ as the
  $\Sch^\psi$-valued Lebesgue-Orlicz space $\Ell^\psi(\Sch^\psi)$  and
  $x_\ga$ as~$\hat x(\ga)$. 

\end{NotTer}

\section{Transfer between Fourier and Schur multipliers}\label{sec:tr}

Let $\La$~be a subset of a discrete group~$\Ga$ and let $\ph$~be a
relative Fourier multiplier on~$\Cont_\La$, the closed subspace spanned by $(\la_\ga)_{\ga\in\La}$ in the reduced $\Cont^*$-algebra of $\Ga$. Let $x\in\Cont_\La$; the
matrix of~$x$ is \emph{constant down the diagonals} in the sense that
for every~$(r,c)\in\Ga\times\Ga$, \
$x_{r,c}=x_{rc^{-1},\id}=x_{rc^{-1}}$. We say that $x$~is a
\emph{Toeplitz} operator on~$\ell^2_\Ga$. Furthermore, the matrix of
the Fourier product~$\Fourier_\ph x$ of~$\ph$ with~$x$ is given
by~$(\Fourier_\ph x)_{r,c}=\ph_{rc^{-1}}x_{r,c}$. This equality shows that if we
set~$\HLa=\{(r,c)\in\Ga\times\Ga:\row\col^{-1}\in
\La\}$~and~$\Hph_{r,c}=\ph_{\row\col^{-1}}$, then $\Fourier_\ph x$~is the Schur
product $\Schur_\Hph x$ of~$\Hph$ with~$x$.  We have transferred the
Fourier multiplier~$\ph$ into the Schur multiplier~$\Hph$. This proves
at once that the norm of the Fourier multiplier~$\ph$ on~$\Cont_\La$
is the norm of the Schur multiplier~$\Hph$ on the subspace of Toeplitz
elements of~$\borne(\ell^2_\Ga)$ with support in~$\HLa$, and that the
same holds for complete norms.

We shall now provide the means to generalise this identification to
the setting of Lebesgue-Orlicz spaces $\Ell^\psi$. We shall bypass the main obstacle, that $\Ell^\psi$~may not
be considered as a subspace of~$\Sch^\psi$, by the Szeg\H{o} limit
theorem as stated by Erik B\'edos (\cite{be97}).

Consider a discrete amenable group~$\Ga$; it admits a
\emph{F{\o}lner averaging net of sets} $(\Ga_\iota)$, that is,
\begin{itemize}
\item each~$\Ga_\iota$ is a finite subset of~$\Ga$;
\item
  $\card(\ga\Ga_\iota\Delta\Ga_\iota)=o(\card\Ga_\iota)$
  for each~$\ga\in\Ga$.
\end{itemize}
Each set~$\Ga_\iota$ corresponds to the orthogonal
projection~$p_\iota$ of~$\ell^2_\Ga$ onto its
$(\card\Ga_\iota)$-di\-men\-sio\-nal subspace of sequences supported
by~$\Ga_\iota$. The \emph{truncate} of a selfadjoint
operator~$y\in\borne(\ell^2_\Ga)$ with respect to~$\Ga_\iota$ is
$y_\iota=p_\iota^{\vphantom{*}} yp_\iota^*$; it has $\card\Ga_\iota$
eigenvalues~$\alpha_j$, counted with multiplicities, and its
\emph{normalised counting measure of eigenvalues} is
\begin{equation*}
  \mu_\iota=\frac1{\card\Ga_\iota}\sum_{j=1}^{\card\Ga_\iota}\delta_{\alpha_j}.
\end{equation*}
If $y$~is a Toeplitz operator, that is, if $y\in\vn$, Erik B\'edos
(\cite[Theorem~10]{be97}) proved that $(\mu_\iota)$ converges
$\textrm{weak}^*$ to the \emph{spectral measure} of~$y$ with respect
to~$\tau$, which is the unique Borel probability measure~$\mu$ on~$\R$
such that
\begin{equation*}
  \tau(f(y))=\int_\R f(\alpha)\mathrm d\mu(\alpha)
\end{equation*}
for every continuous function~$f$ on~$\R$ that tends to zero at
infinity. If $\Ga$~is abelian, then $y$~may be identified as the
class of a real-valued bounded measurable function on the group~$G$
dual to~$\Ga$ and $\mu$~is the distribution of~$y$.

Let us now state and prove the $\Ell^\psi$ version of the identification
described at the beginning of this section. 
\begin{thm}
  \label{trprp}
  Let\/ $\Ga$~be a discrete amenable group and let\/ $\psi\colon\R^+\to\R^+$
  be a continuous nondecreasing function vanishing only at~0. Let\/
  $\La\subseteq\Ga$ and\/ $\ph\in\C^\La$. Consider the associated
  Toeplitz set\/ $\HLa=\{(r,c)\in\Ga\times\Ga:\row\col^{-1}\in\La\}$
  and the Toeplitz matrix defined
  by\/ $\Hph_{r,c}=\ph_{\row\col^{-1}}$. The norm of the relative Fourier multiplier\/~$\ph$
    on\/~$\Ell^\psi_\La$ is bounded by the norm of the relative
    Schur multiplier\/~$\Hph$ on\/~$\Sch^\psi_{\mkern-3mu\HLa}$.
 \end{thm}
\begin{proof}
  A Toeplitz matrix has the form $(x_{rc^{-1}})_{(r,c)\in \HLa}$. Our
  definition of the space~$\Ell^\psi_\La$ (in the section on Notation and terminology) ensures that
  we may suppose that only a finite number of the~$x_\ga$ are nonzero
  for the computation of the norm of~$\ph$. Then
  $(x_{rc^{-1}})_{(r,c)\in \HLa}$ is the matrix of the
  operator $x=\sum_{\ga\in \La}x_\ga\la_\ga$ for the canonical basis
  of~$\ell^2_\Ga$.

  Let~$y=x^*x$ and let
  $\tilde\psi$~be a continuous function with compact support such that
  $\tilde\psi(t)=\psi(t)$ on $[0,\|y\|]$. By Szeg\H{o}'s limit theorem,
  \begin{equation*}
\frac1{\#\Ga_\iota}\tr\psi(y_\iota)=
\frac1{\#\Ga_\iota}\tr\tilde\psi(y_\iota)
    \to\tau(\tilde\psi(y))=\tau(\psi(y)).
  \end{equation*}
  We have $y_\iota={(xp_\iota^*)}^*(xp_\iota^*)$; let us describe how $\Hph$ acts on~$xp_\iota^*$. Schur multiplication
  with~$\Hph$ transforms the matrix of~$xp_\iota^*$, that is, the
  truncated Toeplitz matrix~$(x_{rc^{-1}})_{(r,c)\in
    \HLa\cap\Ga\times\Ga_\iota}$, into the matrix~$(\ph_{rc^{-1}}x_{rc^{-1}})_{(r,c)\in
    \HLa\cap\Ga\times\Ga_\iota}$ so that it transforms~$xp_\iota^*$
  into~$(\Fourier_\ph x)p_\iota^*$.
\end{proof}

\begin{rem}
  In the case of a finite abelian group, no limit theorem is needed.
  This case was considered
  in~\cite[Proposition~2.5$\,(b)$]{ne06}; compare with \cite[Chapter
  6, Lemma 3.8]{pe03}.
\end{rem}

\begin{rem}\label{trrem}
  Our technique proves in fact that the norm of a Fourier multiplier
  is the upper limit of the norm of the corresponding relative Schur multipliers on
  subspaces of truncated Toeplitz matrices. We ignore whether or not it is
  actually their supremum. 
\end{rem}

Remark~\ref{b-cb} illustrates that the two norms in
Theorem~\ref{trprp} are different in general. This is not so in the $\Sch^\psi$-valued case because of the following argument.
It has been used (first in~\cite{bf84},
see~\cite[Proposition~D.6]{bo08}) to show that the complete norm of the
Fourier multiplier~$\ph$ on~$\Cont_\La$ bounds the complete norm of
the Schur multiplier~$\Hph$ on $\Sch^\infty_{\mkern-3mu\HLa}$, so that we have in full
generality
$\|\ph\|_{\Fourier_\cb(\Cont_\La)}=\|\Hph\|_{\Schur_\cb(\Sch^\infty_{\mkern-3mu\HLa})}$.
\begin{lem}
  \label{trans1}
  Let\/ $\Ga$~be a discrete group and let\/ $\Row$~and\/~$\Col$ be subsets
  of\/~$\Ga$. With\/~$\La\subseteq\Ga$
  associate\/~$\HLa=\{(r,c)\in\Row\times\Col:\row\col^{-1}\in \La\}$;
  given\/~$\ph\in\C^\La$, define\/~$\Hph\in\C^\HLa$
  by\/~$\Hph_{r,c}=\ph_{\row\col^{-1}}$.  Let\/ $\psi\colon\R^+\to\R^+$ be
  a continuous nondecreasing function vanishing only at\/~0. The norm of
  the relative Schur multiplier\/~${\Hph}$
  on\/~$\Sch^\psi_{\mkern-3mu\HLa}(\Sch^\psi)$ is bounded by the norm of
  the relative Fourier multiplier\/~$\ph$
  on\/~$\Ell^\psi_\La(\tr\otimes\tau)$.
\end{lem}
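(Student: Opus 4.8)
The goal is the reverse inequality to Lemma~\ref{trprp}$(b)$: transfer a Fourier multiplier back into a Schur multiplier, now without any amenability hypothesis. The standard device (the ``restriction-to-a-corner'' trick going back to Bo\.zejko--Fendler) is to realise a finitely supported matrix with Toeplitz pattern as a genuine corner of a finitely supported element of $\Ell^\psi(\tr\tens\tau)$, and then use that $\Fourier_\ph$ on the latter restricts to $\Schur_\Hph$ on the former. First I would reduce to the case where $\Row$ and $\Col$ are finite sets, since the norm of a Schur multiplier is the supremum over finite rectangular restrictions; call them $\Row_0$ and $\Col_0$. It then suffices to take a finitely supported $a=(a_q)_{q\in\HLa\cap(\Row_0\times\Col_0)}\in\Sch^\psi_{\mkern-3mu\HLa}(\Sch^\psi)$ (so each $a_q\in\Sch^\psi$) and bound $\|\Schur_\Hph a\|$ by $\|\ph\|_{\Fourier(\Ell^\psi_\La(\tr\tens\tau))}\,\|a\|$.

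The key construction is the following. Introduce the element $x=\sum_{\ga\in\La} x_\ga\tens\la_\ga\in\Sch^\psi\tens\vn$ with matrix-valued Fourier coefficients $x_\ga$ chosen so that, when one writes out the $(\Row_0\times\Col_0)$-corner of the operator $x$ acting on $\ell^2\tens\ell^2_\Ga$ (after conjugating by the orthogonal projection onto $\ell^2\tens\ell^2_{\Row_0}$ on the left and $\ell^2\tens\ell^2_{\Col_0}$ on the right), one recovers exactly the prescribed matrix $a$ arranged along its Toeplitz pattern: concretely $x_{rc^{-1}}=a_{r,c}$, which is well defined because we may, by enlarging $\Row_0$ and $\Col_0$ if necessary, arrange that $r\mapsto rc^{-1}$ has the needed consistency, or simply pick one representative per value of $rc^{-1}\in\La$ and accept zeros elsewhere. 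The two crucial points are then: (i) the passage from $a$ to $x$ does not increase the norm, i.e. $\|x\|_{\Ell^\psi_\La(\tr\tens\tau)}\le\|a\|_{\Sch^\psi_{\mkern-3mu\HLa}(\Sch^\psi)}$ — this should follow because the corner map is a contraction (a truncation) on the noncommutative $\Ell^\psi$-space, the corner of a selfadjoint positive element having smaller spectral distribution; and (ii) $\Fourier_\ph$ carries $x$ to $x'=\sum_\ga \ph_\ga x_\ga\tens\la_\ga$, whose corner is precisely $\Schur_\Hph a$, and one has the matching lower bound $\|x'\|_{\Ell^\psi_\La(\tr\tens\tau)}\ge\|\Schur_\Hph a\|_{\Sch^\psi_{\mkern-3mu\HLa}(\Sch^\psi)}$ again by the corner/truncation argument, now in the other direction. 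Chaining the inequalities gives $\|\Schur_\Hph a\|\le\|x'\|\le\|\ph\|_{\Fourier}\,\|x\|\le\|\ph\|_{\Fourier}\,\|a\|$, as desired.

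The main obstacle is carrying out the two norm comparisons between a finitely supported element of $\Sch^\psi(\Sch^\psi)$ with Toeplitz support pattern and the corresponding element of $\Ell^\psi(\tr\tens\tau)$, for a general Orlicz function $\psi$ that need not be convex. For $p$-norms this is the observation that the matrix $(a_{r,c})_{(r,c)\in\Row_0\times\Col_0}$ is a submatrix (hence a corner) of the matrix of $x$ on the larger index set $\Ga$, and that cutting down to a corner does not increase $\Sch^p$ or $\Sch^p(\Sch^p)$ (quasi-)norms; but for a general F-norm defined through $\tr\psi(|\cdot|/a)\le a$ one must check that the truncation $p_\iota\cdot p_\iota$ still decreases $\tr\psi(|\cdot|)$, equivalently that the eigenvalues of a compression are dominated (in the sense of majorization of singular values, $s_k(p\,y\,p)\le s_k(y)$) so that $\tr\psi(|p\,y\,p|)\le\tr\psi(|y|)$ for any nondecreasing $\psi$ with $\psi(0)=0$. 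This singular-value domination under compression is classical (Ky Fan), and once invoked the F-norm inequality is immediate from the definition; the rest is bookkeeping with the matrix-valued Fourier and matrix coefficients set up in the Notation section.
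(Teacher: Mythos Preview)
Your chain of inequalities breaks at step~(i). You want $\|x\|_{\Ell^\psi_\La(\tr\otimes\tau)}\le\|a\|$, but the justification you give (``the corner map is a contraction'') yields the \emph{opposite} inequality: if $a$ is a compression of the Toeplitz operator $x$, then majorisation of singular values gives $\|a\|\le\|x\|$, not $\|x\|\le\|a\|$. And the inequality you need is simply false. Take $\Ga=\Z$, scalar coefficients, $a=\e_{r_1,c_1}+\e_{r_2,c_2}$ with $r_1-c_1\ne r_2-c_2$ and the two entries in distinct rows and columns; then your lift is $x=\la_{r_1-c_1}+\la_{r_2-c_2}$, whose $\Ell^\infty$ norm is $2$ (supremum of $|z^{r_1-c_1}+z^{r_2-c_2}|$ on $\T$), while $\|a\|_{\Sch^\infty}=1$. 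So step~(i) cannot be rescued; step~(ii) is the only direction in which the compression argument actually points.

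There is also a well-definedness issue in your lifting: if two pairs $(r,c)$ and $(r',c')$ in $\HLa\cap(\Row_0\times\Col_0)$ satisfy $rc^{-1}=r'c'^{-1}$, the Toeplitz constraint forces $x_{rc^{-1}}$ to equal both $a_{r,c}$ and $a_{r',c'}$, which need not agree. ``Accepting zeros elsewhere'' does not fix this, since the single Fourier coefficient $x_\ga$ fills the entire diagonal $\{(r,c):rc^{-1}=\ga\}$ at once. The paper's proof avoids both problems by \emph{not} trying to lift $a$ to a Toeplitz element in $\Ell^\psi(\tr\otimes\tau)$. Instead it introduces an extra tensor factor and conjugates $\sum_q a_q\otimes\e_q\otimes\la_\id$ by the unitary $\sum_\ga\e_{\ga,\ga}\otimes\la_\ga$, obtaining an \emph{isometric} embedding of $\Sch^\psi_{\HLa}(\Sch^\psi)$ into $\Ell^\psi_\La(\tr\otimes\tr\otimes\tau)$ that intertwines $\Schur_{\Hph}$ with $\Id\otimes\Fourier_\ph$. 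The extra matrix leg $\e_{r,c}$ is precisely what decouples entries sharing the same value of $rc^{-1}$, and the unitary conjugation replaces your two one-sided compression estimates by a single equality of norms.
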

\begin{proof}
  We adapt the argument in~\cite[Lemma~8.1.4]{pi98}. Let~$x_q\in
  \Sch^\psi$, of which only a finite number are nonzero. The space~$\Ell^\psi(\tr\otimes\tr\otimes\tau)$ is a left and right
  $\Ell^\infty(\tr\otimes\tr\otimes\tau)$-module,
  and~$\sum_{\ga\in\Ga}\e_{\ga\ga}\tens\la_\ga$
  is a unitary in~$\Ell^\infty(\tr\otimes\tau)$ so that
  \begin{multline*}
     \Bigl\|\sum_{q\in
        \HLa}x_q\tens\e_q\Bigr\|_{\Sch^\psi_{\mkern-3mu\HLa}(\Sch^\psi)}\\
      \begin{aligned}
      &=\Bigl\|\Bigl(\Id\otimes\sum_{r\in\Row}\e_{r,r}\tens\la_\row\Bigr)
      \Bigl(\sum_{q\in \HLa}x_q\tens\e_q\tens\la_\id\Bigr)
      \Bigl(\Id\otimes\sum_{c\in\Col}\e_{c,c}\tens\la_\col^*\Bigr)\Bigr\|_{\Ell^\psi(\tr\otimes\tr\otimes\tau)}\\
      &= \biggl\| \sum_{(r,c)\in \HLa}
      x_{r,c}\tens\e_{r,c}\tens\la_{\row\col^{-1}}\biggr\|_{\Ell^\psi(\tr\otimes\tr\otimes\tau)}\\
      &= \biggl\| \sum_{\ga\in \La} \biggl(\sum_{\row\col^{-1}=\ga}
      x_{r,c}\tens\e_{r,c}\biggr)\tens\la_\ga\biggr\|_{\Ell^\psi_\La(\tr\otimes\tr\otimes\tau)}.
    \end{aligned}
  \end{multline*}
 This yields an isometric embedding of~$\Sch^\psi_{\mkern-3mu\HLa}(\Sch^\psi)$ in~$\Ell^\psi_\La(\tr\otimes\tr\otimes\tau)$. As $\Sch^\psi(\Sch^\psi)$~is the
  Schatten-von-Neumann-Orlicz class for the Hilbert space~$\ell^2\otimes\ell^2_\Ga$, which may be identified with~$\ell^2$,
  \begin{multline*}
    \Bigl\| \sum_{q\in \HLa} x_q\tens\Hph_q\e_q \Bigr\|_{\Sch^\psi_{\mkern-3mu\HLa}(\Sch^\psi)}
    = \biggl\| \sum_{\ga\in \La} \biggl(
    \sum_{\row\col^{-1}=\ga} x_{r,c}\tens\e_{r,c}
    \biggr)\tens\ph_\ga     \la_\ga \biggr\|_{\Ell^\psi_\La(\tr\otimes\tr\otimes\tau)}\\
    \le \|\Id_{\Sch^\psi}\otimes\Fourier_\ph\| \Bigl\| \sum_{q\in \HLa} x_q\tens\e_q
    \Bigr\|_{\Sch^\psi_{\mkern-3mu\HLa}(\Sch^\psi)}.\tag*{\qedhere}
  \end{multline*}
\end{proof}

\begin{rem}
  \label{transg}
  This proof also shows the following transfer: let $(r_i)$ and
  $(c_j)$ be sequences in~$\Ga$,
  consider~$\bLa=\{(i,j)\in\N\times\N:r_ic_j\in\La\}$ and
  define~$\bph\in\C^\bLa$ by~$\bph(i,j)=\ph(r_ic_j)$. Then the norm of
  the relative Schur multiplier~${\bph}$
  on~$\Sch^\psi_{\bLa}(\Sch^\psi)$ is bounded by the norm of the
  relative Fourier multiplier~$\Id_{\Sch^\psi}\otimes\Fourier_\ph$
  on~$\Ell^\psi_\La(\tr\otimes\tau)$ (compare with
  \cite[Theorem~6.4]{pi01}). In particular, if the $r_ic_j$ are
  pairwise distinct, this permits us to transfer every Schur multiplier,
  not just the Toeplitz ones. See \cite[Section~11]{ne06} for
  applications of this transfer.
\end{rem}

We shall now prove that the two norms in this lemma are in fact equal.
As we want to compute norms of multipliers on $\Sch^\psi$-valued
spaces, we shall generalise the Szeg\H{o} limit theorem to the block
matrix case, which was not considered in \cite{be97}. This is the analogue of
the scalar case for selfadjoint elements $y\in\mat_n\otimes\vn$, whose
\emph{$\mat_n$-valued spectral measure}~$\mu$ is defined by
\begin{equation*}
  \int_\R f(\alpha)\mathrm d\mu(\alpha)=\Id_{\mat_n}\otimes\tau(f(y))
\end{equation*}
for every continuous function $f$ on $\R$ that tends to zero at infinity.

The orthogonal projection~$\tilde
p_\iota=\Id_{\ell^2_n}\otimes p_\iota$ defines the truncate~$y_\iota=\tilde p_\iota^{\vphantom{*}} y\tilde
p_\iota^*\in\mat_n\otimes\borne(\ell^2_{\Ga_\iota})$, and the \emph{$\mat_n$-valued normalised counting
measure of eigenvalues}~$\mu_\iota$ by
\begin{equation*}
  \int_\R f(\alpha)\mathrm d\mu_\iota(\alpha)=\Id_{\mat_n}\otimes\frac{\tr}{\card\Ga_\iota}(f(y_\iota))
\end{equation*}
for every continuous function $f$ on $\R$ that tends to zero at infinity.

\begin{thm}[Matrix Szeg\H{o} limit theorem]
  \label{sz}
  Let\/ $\Ga$~be a discrete amenable group and let\/ $(\Ga_\iota)$~be a F{\o}lner averaging net for\/~$\Ga$. Let\/ $y$~be a selfadjoint
  element of\/~$\mat_n\otimes\vn$. The net\/~$(\mu_\iota)$ of\/~$\mat_n$-valued normalised
  counting measures of eigenvalues of the truncates of\/~$y$ with respect
  to\/~$\Ga_\iota$ converges in the\/ $\textrm{weak}^*$ topology to
  the spectral measure of\/~$y$:
  \begin{equation*}
    \int_\R f(\alpha)\mathrm d\mu_\iota(\alpha)\to\Id_{\mat_n}\otimes\tau(f(y))
  \end{equation*}
  for every continuous function\/~$f$ on\/~$\R$ that tends to zero at
  infinity.
\end{thm}

\begin{proof}[Sketch of proof]
  We first suppose that $y=\sum_{\ga\in\Ga}y_\ga\tens\la_\ga$ with
  only a finite number of the~$y_\ga\in\mat_n$ nonzero. The
  $\mat_n$-valued matrix of the truncate~$y_\iota$ of~$y$ for the
  canonical basis of~$\ell^2_{\Ga_\iota}$
  is~$(y_{rc^{-1}})_{(r,c)\in\Ga_\iota\times\Ga_\iota}$. As the
  truncates~$y_\iota$ of~$y$ are uniformly bounded, it suffices to
  prove that
  \begin{equation*}
    \Id\otimes\frac\tr{\#\Ga_\iota}(y_\iota^k)\to\Id\otimes\tau(y^k)
  \end{equation*}
  for every~$k$. This is trivial if~$k=0$. If~$k=1$, then
  \begin{equation*}
    \Id\otimes\frac\tr{\#\Ga_\iota}(y_\iota)
    =\frac1{\#\Ga_\iota}\sum_{\col\in\Ga_\iota}y_{\col,\col}
    =\Id\otimes\tau(y)
  \end{equation*}
  as~$y_{\col,\col}=y_{\col\col^{-1}}=y_\id$. If~$k\ge2$, the same
  formula holds with~$y^k$ instead of~$y$:
  \begin{equation*}
    \Id\otimes\tau(y^k)=\Id\otimes\frac\tr{\#\Ga_\iota}
    (\tilde p_\iota^{\vphantom{*}} y^k\tilde p_\iota^*),
  \end{equation*}
  so that we wish to prove
  \begin{equation*}
    \Id\otimes\tr(\tilde p_\iota^{\vphantom{*}} y^k\tilde p_\iota^*
    -{(\tilde p_\iota^{\vphantom{*}} y\tilde p_\iota^*)}^k)
    =o(\card\Ga_\iota).
  \end{equation*}
  Note that 
  \begin{equation*}
    \bignorm{\Id\otimes\tr\bigl(\tilde p_\iota^{\vphantom{*}} y^k\tilde p_\iota^*-{(\tilde p_\iota^{\vphantom{*}} y\tilde p_\iota^*)}^k\bigr)}_{\Sch^1_n}
    \le\|\tilde p_\iota^{\vphantom{*}} y^k\tilde p_\iota^*-{(\tilde p_\iota^{\vphantom{*}} y\tilde p_\iota^*)}^k\|_{\Sch^1(\Sch^1_n)}.
  \end{equation*}
  Lemma~5 in \cite{be97} provides the following estimate. As
  \begin{equation*}
    \tilde p_\iota^{\vphantom{*}} y^k\tilde p_\iota^*-{(\tilde p_\iota^{\vphantom{*}} y\tilde p_\iota^*)}^k
    =\tilde p_\iota^{\vphantom{*}} y^{k-1}(y\tilde p_\iota^*-\tilde p_\iota^*\tilde p_\iota^{\vphantom{*}} y\tilde p_\iota^*)
    +(\tilde p_\iota^{\vphantom{*}} y^{k-1}\tilde p_\iota^*-{(\tilde p_\iota^{\vphantom{*}} y\tilde p_\iota^*)}^{k-1})\tilde p_\iota^{\vphantom{*}}y\tilde p_\iota^*,
  \end{equation*}
  an induction yields 
  \begin{equation*}
    \|\tilde p_\iota^{\vphantom{*}} y^k\tilde p_\iota^*-{(\tilde p_\iota^{\vphantom{*}} y\tilde p_\iota^*)}^k\|_{\Sch^1(\Sch^1_n)}
      \le(k-1)\|y\|_{\mat_n\otimes\vn}^{k-1}\|y\tilde p_\iota^*-\tilde p_\iota^*\tilde p_\iota^{\vphantom{*}} y\tilde p_\iota^*\|_{\Sch^1(\Sch^1_n)}.
  \end{equation*}
  It suffices to consider the very last norm for each term~$y_\ga\tens\la_\ga$ of~$y$: let~$h\in\ell^2_n$ and~$\beta\in\Ga$; as
  \begin{equation*}
    \bigl((y_\ga\tens\la_\ga)\tilde p_\iota^*-\tilde p_\iota^*\tilde p_\iota^{\vphantom{*}}(y_\ga\tens\la_\ga)\tilde p_\iota^*\bigr)(h\otimes\e_\beta)
    =
    \begin{cases}
      y_\ga(h)\e_{\ga\beta}&\text{if $\beta\in\Ga_\iota$ and $\ga\beta\notin\Ga_\iota$}\\
      0&\text{otherwise,}
    \end{cases}
  \end{equation*}
  the definition of a F{\o}lner averaging net yields
  \begin{equation*}
    \|(y_\ga\tens\la_\ga)\tilde p_\iota^*
    -\tilde p_\iota^*\tilde p_\iota^{\vphantom{*}} (y_\ga\tens\la_\ga)\tilde p_\iota^*\|
    _{\Sch^1(\Sch^1_n)}
    \le\#(\Ga_\iota\setminus\ga^{-1}\Ga_\iota)\|y_\ga\|_{\Sch^1_n}
    =o(\card\Ga_\iota).
  \end{equation*}
  An approximation argument as in the proof of \cite[Proposition~4]{be97}
  permits us to conclude for~$y\in\mat_n\otimes\vn$.
\end{proof}

Here is the promised strengthening of Lemma~\ref{trans1} together with three variants.

\begin{thm}\label{trans2}
  Let\/ $\Ga$~be a discrete amenable group. Let\/
  $\La\subseteq\Ga$ and\/ $\ph\in\C^\La$. Consider the
  associated Toeplitz set\/~$\HLa=\{(r,c)\in\Ga\times\Ga:\row\col^{-1}\in
  \La\}$ and the Toeplitz matrix defined by\/~$\Hph_{r,c}=\ph_{\row\col^{-1}}$.
  \begin{enumerate}
  \item Let\/ $\psi\colon\R^+\to\R^+$ be a continuous nondecreasing
    function vanishing only at\/~0. The norm of the relative Fourier
    multiplier\/~$\ph$
    on\/~$\Ell^\psi_\La(\tr\otimes\tau)$ and the norm of the
    relative Schur multiplier\/~${\Hph}$
    on\/~$\Sch^\psi_{\mkern-3mu\HLa}(\Sch^\psi)$ are equal.
  \item Let\/ $p\ge1$. The complete norm of the relative Fourier
    multiplier\/~$\ph$ on\/~$\Ell^p_\La$ and the complete norm of the
    relative Schur multiplier\/~$\Hph$ on\/~$\Sch^p_{\mkern-3mu\HLa}$ are
    equal:
    \begin{equation*}
      \|\ph\|_{\Fourier_\cb(\Ell^p_\La)}=\|\Hph\|_{\Schur_\cb(\Sch^p_{\mkern-3mu\HLa})}.
    \end{equation*}
  \item The norm of the relative Fourier multiplier\/~$\ph$ on\/~$\Cont_\La$, its complete norm, the norm of the relative Schur
    multiplier\/~$\Hph$ on\/~$\Sch^\infty_{\mkern-3mu\HLa}$, and its complete norm are equal:
    \begin{equation*}
      \|\ph\|_{\Fourier(\Cont_\La)}
      =\|\ph\|_{\Fourier_\cb(\Cont_\La)}
      =\|\Hph\|_{\Schur_\cb(\Sch^\infty_{\mkern-3mu\HLa})}
      =\|\Hph\|_{\Schur(\Sch^\infty_{\mkern-3mu\HLa})}.
    \end{equation*}
\item Suppose that\/ $\La=\Ga$. The norm of the Fourier algebra
  multiplier\/~$\ph$, its complete norm, the norm of the Schur
  multiplier\/~$\Hph$ on\/~$\Sch^\infty$, and its complete norm are
  equal:
    \begin{equation*}
      \|\ph\|_{\Fourier(\mathrm{A}(\Ga))}
      =\|\ph\|_{\Fourier_\cb(\mathrm A(\Ga))}
      =\|\Hph\|_{\Schur_\cb(\Sch^\infty)}
      =\|\Hph\|_{\Schur(\Sch^\infty)}.
    \end{equation*}
  \end{enumerate}
\end{thm}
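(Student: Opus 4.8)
The plan is to combine the two inequalities already in hand — Lemma~\ref{trprp}$\,(b)$ (Fourier norm $\le$ Schur norm, via the matrix Szeg\H{o} theorem) and Lemma~\ref{trans1} (Schur norm $\le$ Fourier norm, via the module-multiplication/conjugation trick) — to get part~$(a)$ at one stroke. Indeed, Lemma~\ref{trans1} holds for any discrete group, and Lemma~\ref{trprp}$\,(b)$ holds for amenable $\Ga$; chaining the two gives $\|\ph\|_{\Fourier(\Ell^\psi_\La(\tr\otimes\tau))}\le\|\Hph\|_{\Schur(\Sch^\psi_{\mkern-3mu\HLa}(\Sch^\psi))}\le\|\ph\|_{\Fourier(\Ell^\psi_\La(\tr\otimes\tau))}$, forcing equality. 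This is the cleanest route and I would write $(a)$ first, in one line.

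For part~$(b)$ I would specialise $\psi$ to the power function $t\mapsto t^p$ with $p\ge1$. By the discussion in the Notation section, the norm on $\Sch^p(\Sch^p)=\Sch^p[\Sch^p]$ and on $\Ell^p(\tr\otimes\tau)=\Sch^p[\Ell^p]$ are exactly the norms defining the canonical operator space structures, so $\|\ph\|_{\Fourier_\cb(\Ell^p_\La)}$ \emph{is} the norm of $\Id_{\Sch^p}\otimes\Fourier_\ph$ on $\Ell^p_\La(\tr\otimes\tau)$ and $\|\Hph\|_{\Schur_\cb(\Sch^p_{\mkern-3mu\HLa})}$ \emph{is} the norm of $\Id_{\Sch^p}\otimes\Schur_\Hph$ on $\Sch^p_{\mkern-3mu\HLa}(\Sch^p)$. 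Thus $(b)$ is literally the instance of $(a)$ with $\psi(t)=t^p$, rewritten in the c.b.\ language; I would just invoke $(a)$ and the references \cite[Corollary~1.4]{pi98}, \cite[Lemma~1.7]{pi98} already cited.

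For part~$(c)$ I would run the chain of four equalities from the two ends inward. The outer two — $\|\ph\|_{\Fourier(\Cont_\La)}=\|\ph\|_{\Fourier_\cb(\Cont_\La)}$ (amenability, via \cite[Corollary~1.8]{dh85}) and $\|\Hph\|_{\Schur_\cb(\Sch^\infty_{\mkern-3mu\HLa})}=\|\Hph\|_{\Schur(\Sch^\infty_{\mkern-3mu\HLa})}$ (via \cite[Theorem~3.2]{pps89}) — are quoted in the Notation section and need no argument. The middle equality $\|\ph\|_{\Fourier_\cb(\Cont_\La)}=\|\Hph\|_{\Schur_\cb(\Sch^\infty_{\mkern-3mu\HLa})}$ is the case already treated at the start of Section~\ref{sec:tr}: a Toeplitz operator in $\Cont_\La$ literally \emph{has} a matrix with pattern $\HLa$ on which $\Fourier_\ph$ acts as $\Schur_\Hph$, and one direction of the resulting norm identity is the elementary observation there while the other is the $p=\infty$ case of Lemma~\ref{trans1} (or directly the argument in \cite{bf84}); in fact the sentence preceding Lemma~\ref{trans1} already records this as holding "in full generality". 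I would simply cite that. Part~$(d)$ is the special case $\La=\Ga$ of~$(c)$, using $\Fourier(\mathrm A(\Ga))$ as the name for Fourier multipliers on all of $\Cont$ and identifying $\Sch^\infty_{\mkern-3mu\widehat\Ga}$ with $\Sch^\infty$ since $\{(r,c):rc^{-1}\in\Ga\}=\Ga\times\Ga$.

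The only genuine subtlety — and the step I expect to be the main obstacle — is making sure the measure-theoretic hypotheses in Lemma~\ref{trprp}$\,(b)$ are actually met when feeding in an arbitrary continuous nondecreasing $\psi$ vanishing only at $0$, rather than a power function: one must know that the relevant elements lie in the heart of $\Ell^\psi$ so that truncating to finitely many Fourier coefficients is legitimate and the Szeg\H{o} convergence applies to a compactly supported modification $\tilde\psi$ of $\psi$. But this is exactly what the definition of $\Ell^\psi_\La$ in the Notation section guarantees and what the proof of Lemma~\ref{trprp} already exploits, so for Theorem~\ref{trans2} itself there is nothing new to do beyond citing Lemmas~\ref{trprp} and~\ref{trans1}.
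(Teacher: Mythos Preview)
Your proposal is correct and matches the paper's own proof essentially line for line: the paper simply writes ``Combine Proposition~\ref{trprp}$\,(b)$ with Lemma~\ref{trans1}'' for~$(a)$ and~$(b)$, and then invokes \cite[Corollary~1.8]{dh85} and \cite[Theorem~3.2]{pps89} for the outer equalities in~$(c)$ and~$(d)$. Your additional commentary on the heart of~$\Ell^\psi$ and on the middle equality in~$(c)$ is accurate and indeed already absorbed into the proofs of the cited lemmas and the discussion preceding Lemma~\ref{trans1}.
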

\begin{proof}
  $(a)$. Combine the argument in Theorem~\ref{trprp} with the matrix
  Szeg\H{o} limit theorem and apply Lemma~\ref{trans1}. 

  $(c)$. Recall that the complete norm of a Schur multiplier~$\Hph$
  on~$\Sch^\infty_{\mkern-3mu\HLa}$ is equal to its norm
  (\cite[Theorem~3.2]{pps89}). Recall also that the norm of a Fourier
  multiplier~$\chi$ on~$\Cont$ is equal to its complete norm, because
  $\Ga$~is amenable. Moreover, it coincides with the norm of $\chi$ in
  $\mathrm{A}(\Ga)$ (\cite[Corollary~1.8]{dh85}). Let $\ph$ be a
  relative contractive Fourier multiplier on $\Cont_\La$; compose it
  with the trivial character of $\Ga$ to obtain a contractive form on
  $\Cont_\La$.  Then, by the Hahn-Banach extension theorem, $\ph$ is
  the restriction of a contractive element $\chi$ in
  $\mathrm{A}(\Ga)$. Now $\chi$ is a completely contractive Fourier
  multiplier on $\Cont$, and so is $\ph$ on $\Cont_\La$. The
  conclusion follows from $(a)$ and $(b)$.
\end{proof}

\section{\texorpdfstring{Local embeddings of~$\Ell^p$ into~$\Sch^p$}{Local embeddings of Lp into Sp}}\label{sec:loc}

\noindent The proof of Theorem~\ref{trprp} can be interpreted as an
embedding of $\Ell^\psi$ into an ultraproduct of finite-dimensional
spaces $\Sch^\psi_n$ that intertwines Fourier and Toeplitz Schur
multipliers. If we restrict ourselves to power functions $\psi\colon
t\mapsto t^p$ with $p\ge 1$, such embeddings are well known and the
proof of Theorem~\ref{trans2} does not need the full strength of the
matrix Szeg\H{o} limit theorem but only the existence of such
embeddings. In this section, we explain two ways to obtain them by
interpolation.

The first way is to extend the classical result that the reduced
$\Cont^*$-algebra~$\Cont$ of a discrete group $\Ga$ has the completely positive
approximation property if $\Ga$ is amenable. We follow the approach
of \cite[Theorem~2.6.8]{bo08}.  Let $\Ga$ be a discrete amenable
group and let $\Ga_\iota$ be a F{\o}lner averaging net of sets. As
above, we denote by $p_\iota$ the orthogonal projection from
$\ell^2_\Ga$ to $\ell^2_{\Ga_\iota}$. Define the
compression~$\phi_\iota$ and the embedding $\psi_\iota$ by
\begin{equation}
  \label{phipsi}
  \begin{aligned}[t]
    \phi_\iota\colon\Cont&\to\borne(\ell^2_{\Ga_\iota})\\
    x&\mapsto p_\iota x p_\iota^*
  \end{aligned}
  \quad\text{and}\quad
  \begin{aligned}[t]
    \psi_\iota\colon\borne(\ell^2_{\Ga_\iota})&\to\Cont\\
    \e_{r,c}&\mapsto(1/\card \Ga_\iota)\la_r\la_{c^{-1}}.
  \end{aligned}
\end{equation}
If we endow $\borne(\ell^2_{\Ga_\iota})$ with the normalised trace,
these maps are unital completely positive, trace preserving (and
normal), and the net $(\psi_\iota\phi_\iota)$ converges pointwise to the
identity of $\Cont$. One can therefore extend them by interpolation to
completely positive contractions on the respective noncommutative
Lebesgue spaces. Recall that $\Ell^p(\borne(\ell^2_{\Ga_\iota}),
(1/\card\Ga_\iota)\tr)$ is
${(\card\Ga_\iota)}^{-1/p}\Sch^p_{\card \Ga_\iota}$. We get a
net of complete contractions
\begin{equation*}
  \tilde \phi_\iota \colon  \Ell^p\to {(\card\Ga_\iota)}^{-1/p}\Sch^p_{\card \Ga_\iota}
  \quad\text{and}\quad
  \tilde \psi_\iota\colon {(\card\Ga_\iota)}^{-1/p}\Sch^p_{\card \Ga_\iota} \to \Ell^p
\end{equation*}
such that $(\tilde\psi_\iota\tilde\phi_\iota)$ converges pointwise
to the identity of $\Ell^p$. Moreover, the definitions~\eqref{phipsi}
show that these maps also intertwine Fourier and Toeplitz Schur
multipliers.

This approach is more canonical, as it allows us to extend the transfer
to vector-valued spaces in the sense of \cite[Chapter
3]{pi98}. Recall that for any hyperfinite semifinite von Neumann
algebra $M$ and any operator space $E$, one can define $\Ell^p(M,E)$.
For $p=\infty$, this space is defined as $M\tens_{\min}E$; for
$p=1$, this space is defined as $M_*^{\mathrm{op}}\hat\tens E$; these spaces form an
interpolation scale for the complex method when $1\le p\le
\infty$. For us, $M$ will be $\borne(\ell^2)$ or the group von
Neumann algebra $\vn$. As the maps $\psi_\iota$ and $\phi_\iota$ are unital
completely positive and trace preserving and normal, they define
simultaneously complete contractions on $M$ and $M_*$. By
interpolation, the maps $\psi_\iota\tens \Id_E$ and $\phi_\iota\tens
\Id_E$ are still complete contractions on the spaces $\Ell_p(E)$ and
$\Sch^p[E]$. Let $\ph\in\C^\Ga$; the transfer shows that the
norm of $\Id_E\otimes\Fourier_\ph$ on~$\Ell^p(E)$ is bounded by the
norm of $\Id_E\otimes\Schur_\Hph$ on~$\Sch^p[E]$ and that their
complete norms coincide. In formulas,
\begin{gather*}
  \|\Id_E\tens\Fourier_\ph\|_{\borne(\Ell^p(E))}
  \le\|\Id_E\tens\Schur_\Hph\|_{\borne(\Sch^p[E])},\\
  \|\Id_E\tens\Fourier_\ph\|_{\cb(\Ell^p(E))}
  =\|\Id_E\tens\Schur_\Hph\|_{\cb(\Sch^p[E])}.
\end{gather*}

The compression $\phi_\iota$ provides a two-sided approximation of an
element $x$, whereas the proof of Theorem~\ref{trprp} uses only a
one-sided approximation.  This subtlety makes a difference in our
second way to obtain embeddings, a direct proof by complex
interpolation.

\begin{prp}
  Let\/ $\Ga$~be a discrete amenable group and let\/ $(\mu_\iota)$~be a
  \emph{Reiter net of means} for\/~$\Ga$:
  \begin{itemize}
  \item each\/~$\mu_\iota$ is a positive sequence summing to\/~1 with
    finite support\/~$\Ga_\iota\subseteq\Ga$ and viewed as a
    diagonal operator from\/~$\ell^2_{\Ga_\iota}$ to\/~$\ell^2_\Ga$,
    so that
    \begin{equation*}
      \|\mu_\iota\|_{\Sch^1}=\sum_{\ga\in\Ga_\iota}{(\mu_\iota)}_{\ga}=1;
    \end{equation*}
  \item the net\/~$(\mu_\iota)$ satisfies, for each\/~$\ga\in\Ga$,
    \emph{Reiter's Property~$P_1$}:
    \begin{equation*}
      \sum_{\beta\in\Ga}{\bigl|{(\mu_\iota)}_{\ga^{-1}\beta}-{(\mu_\iota)}_{\beta}\bigr|}\to0.
    \end{equation*}
  \end{itemize}
  Let\/~$x\in\mat_n\otimes\vn=\vn(\tr\otimes\tau)$ 
  and\/~$p\ge1$. Then
  \begin{equation*}
    \limsup\|x\mu_\iota^{1/p}\|_{\Sch^p_{\vphantom{n}}(\Sch^p_n)}=\|x\|_{\Ell^p(\tr\otimes\tau)}.
  \end{equation*}
\end{prp}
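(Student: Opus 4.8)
The plan is to prove two inequalities. For the upper bound $\limsup\|x\mu_\iota^{1/p}\|_{\Sch^p(\Sch^p_n)}\le\|x\|_{\Ell^p(\tr\otimes\tau)}$, I would first treat $x=\sum_{\ga}y_\ga\tens\la_\ga$ with finitely many $y_\ga\in\mat_n$ nonzero, which is dense (in the heart) and suffices by a standard approximation argument. For such $x$, the map $y\mapsto y\mu_\iota^{1/p}$ should be estimated by interpolating between the cases $p=\infty$ and $p=1$. At $p=\infty$ one has $\|x\mu_\iota^0\|=\|x\|_{\mat_n\otimes\vn}$ trivially (taking $\mu_\iota^0$ to be the inclusion $\ell^2_{\Ga_\iota}\hookrightarrow\ell^2_\Ga$, so this is just a compression); at $p=1$ one computes directly that $\|x\mu_\iota\|_{\Sch^1(\Sch^1_n)}=\sum_{\beta\in\Ga_\iota}{(\mu_\iota)}_\beta\,\|x\e_\beta\|$-type expression, and using Reiter's property~\eqref{reiter} together with $x\e_\beta$ being a shifted column of the Toeplitz matrix, one gets $\limsup\|x\mu_\iota\|_{\Sch^1(\Sch^1_n)}\le\|x\|_{\Ell^1(\tr\otimes\tau)}$. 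Complex interpolation (Stein interpolation applied to the analytic family $z\mapsto x\mu_\iota^z$) then yields the bound for every $p\in[1,\infty]$. The key point making interpolation legitimate is that $\mu_\iota$ is a \emph{positive} diagonal operator, so $\mu_\iota^z$ is a well-defined bounded analytic family.

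For the lower bound $\limsup\|x\mu_\iota^{1/p}\|_{\Sch^p(\Sch^p_n)}\ge\|x\|_{\Ell^p(\tr\otimes\tau)}$, I would exploit that Reiter's property forces $\mu_\iota^{1/p}$ to be an asymptotic \emph{isometry} in the relevant sense: the subunitary shift structure of $\la_\ga$ means $\|x\mu_\iota^{1/p}\e_\beta\|_{\ell^2_n}=\|x\e_\beta\|$ up to the boundary terms controlled by~\eqref{reiter}. Concretely, one writes $\tr\otimes\tr\,\bigl((x\mu_\iota^{1/p})^*(x\mu_\iota^{1/p})\bigr)^{p/2}$-type quantities and compares them with $\card\Ga_\iota$ times the corresponding truncated trace of $|x|^p$; because $\mu_\iota$ is $\ell^1$-normalised and Reiter-invariant, the normalisation works out so that the limit recovers $\tau\otimes\tau(|x|^p)=\|x\|_{\Ell^p(\tr\otimes\tau)}^p$ (after raising to the appropriate power). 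Alternatively, and perhaps more cleanly, one notes that the maps $\tilde\ph_\iota\colon x\mapsto x\mu_\iota^{1/p}$ and a companion map built from $\mu_\iota^{1/p'}$ on the other side form an asymptotically isometric factorisation of the identity on $\Ell^p_\La(\tr\otimes\tau)$, exactly as in the Folner-net embeddings discussed just above in the section; the Reiter net is the "spread out" replacement for the Folner net, and the same module computation as in the proof of Lemma~\ref{trans1} shows the factorisation intertwines the multipliers and is asymptotically isometric.

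I expect the main obstacle to be the lower bound, specifically getting the normalisation right: with a Folner net one divides by $\card\Ga_\iota$ and eigenvalue-counts, whereas with a Reiter net the weights ${(\mu_\iota)}_\beta$ are already a probability distribution, so one must check that $\sum_\beta {(\mu_\iota)}_\beta\,(\text{local contribution of }x\text{ at }\beta)$ converges to $\tau\otimes\tau(|x|^p)$ rather than to some weighted average that depends on the choice of net. This is where Reiter's Property~$P_1$ enters decisively: it guarantees that translating the weight by any fixed $\ga$ costs $o(1)$ in $\ell^1$, hence the weighted local contributions are asymptotically translation-invariant, and a Toeplitz (i.e.\ translation-invariant) operator $x$ therefore sees every $\beta$ "the same way" in the limit, forcing the weighted average to equal the single diagonal value $\tau\otimes\tau(|x|^p)$. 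Once this is pinned down, combining it with the interpolation upper bound closes the $\limsup$ into an equality.
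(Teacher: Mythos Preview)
Your interpolation scheme is right in outline, but the claimed ``direct computation'' at $p=1$ is a genuine gap. The trace norm $\|x\mu_\iota\|_{\Sch^1(\Sch^1_n)}=\tr\bigl(\mu_\iota x^*x\mu_\iota\bigr)^{1/2}$ does \emph{not} reduce to a weighted sum $\sum_\beta(\mu_\iota)_\beta\|x\e_\beta\|$ of column norms; that would be a mixed norm, not the Schatten norm, and in any case the column $\ell^2$-norms of a Toeplitz $x$ are all equal to $\|x\|_{\Ell^2}$, not to anything involving $\|x\|_{\Ell^1}$. The paper's route is to first observe that the case $p=2$ \emph{is} a direct computation, giving the exact equality $\|x\mu_\iota^{1/2}\|_{\Sch^2(\Sch^2_n)}=\|x\|_{\Ell^2(\tr\otimes\tau)}$, and then to use Reiter's Property~$P_1$ to deduce Property~$P_2$, namely the asymptotic commutation $\|\la_\ga\mu_\iota^{1/2}-\mu_\iota^{1/2}\la_\ga\|_{\Sch^2}\to0$. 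The $p=1$ bound then follows by writing the polar decomposition $x=ab$ with $a=u|x|^{1/2}$, $b=|x|^{1/2}$, splitting $x\mu_\iota=a(b\mu_\iota^{1/2}-\mu_\iota^{1/2}b)\mu_\iota^{1/2}+a\mu_\iota^{1/2}\cdot b\mu_\iota^{1/2}$, and applying Cauchy--Schwarz together with the $p=2$ identity. Only after this is the Stein-type interpolation with $f(z)=u|x|^{pz}\mu_\iota^z$ invoked, exactly as you propose.

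For the lower bound, your two sketches are both too vague to constitute an argument (and the first one drifts back toward a F{\o}lner normalisation by $\card\Ga_\iota$, which is not what is happening here). The paper's argument is a clean duality: one first notes $\tr(y\mu_\iota)\to\tau(y)$ for $y\in\vn(\tr\otimes\tau)$, so that $\|x\|_{\Ell^p}^p=\tau(|x|^p)=\lim\tr(|x|^p\mu_\iota)$; then one factors $\tr(|x|^p\mu_\iota)=\tr\bigl(\mu_\iota^{1/p'}|x|^{p-1}u^*\cdot x\mu_\iota^{1/p}\bigr)$ and applies H\"older together with the already-established upper bound for the conjugate exponent~$p'$. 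This closes the $\limsup$ into an equality without any further asymptotic-isometry bookkeeping.
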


\begin{proof}
  Consider~$x=\sum_{\ga\in\Ga}x_\ga\tens\la_\ga$
  with only a finite number of the~$x_\ga\in\mat_n$ nonzero. As
  \begin{equation*}
    \sum_{\beta\in\Ga}{\bigl|{(\mu_\iota)}_{\ga^{-1}\beta}^{1/2}-{(\mu_\iota)}_{\beta}^{1/2}\bigr|}^2\le
    \sum_{\beta\in\Ga}{\bigl|{(\mu_\iota)}_{\ga^{-1}\beta}-{(\mu_\iota)}_{\beta}\bigr|},
  \end{equation*}
  Property~$P_1$ implies \emph{Property~$P_2$}:
  \begin{equation*}
    \|\la_\ga\mu_\iota^{1/2}-\mu_\iota^{1/2}\la_\ga\|_{\Sch^2}\to0, 
  \end{equation*}
  so that
  \begin{equation*}
    \|x\mu_\iota^{1/2}-\mu_\iota^{1/2}x\|_{\Sch^2(\Sch^2_n)}\to0.
  \end{equation*}
  As the $\mat_n$-valued matrix of~$x$ for the canonical basis of~$\ell^2_\Ga$ is~$(x_{rc^{-1}})_{(r,c)\in\Ga\times\Ga}$,
  \begin{align*}
    \|x\mu_\iota^{1/2}\|_{\Sch^2(\Sch^2_n)}^2
    &=\sum_{(r,c)\in\Ga\times\Ga}\|x_{rc^{-1}}\|_{\Sch^n_2}^2{(\mu_\iota)}_c\\
    &=\sum_{c\in\Ga}{(\mu_\iota)}_c\sum_{r\in\Ga}\|x_{rc^{-1}}\|_{\Sch^n_2}^2\\
    &=\sum_{c\in\Ga}{(\mu_\iota)}_c\|x\|_{\Ell^2(\tr\otimes\tau)}^2=\|x\|_{\Ell^2(\tr\otimes\tau)}^2.
  \end{align*}
By density and continuity, the result extends to all~$x\in \Ell^2(\tr\otimes\tau)$.

  Let us prove now that for~$x\in\Ell^\infty(\tr\otimes\tau)$,
  \begin{equation*}
    \limsup\|x\mu_\iota\|_{\Sch^1(\Sch^1_n)}\le\|x\|_{\Ell^1(\tr\otimes\tau)}.
  \end{equation*}
  The polar decomposition~$x=u|x|$ yields a factorisation~$x=ab$
  with~$a=u\abs{x}^{1/2}$ and~$b=\abs{x}^{1/2}$
  in~$\Ell^\infty(\tr\otimes\tau)$ such that
  \begin{gather*}
    \|a\|_{\Ell^2(\tr\otimes\tau)}=\|b\|_{\Ell^2(\tr\otimes\tau)}=\|x\|_{\Ell^1(\tr\otimes\tau)}^{1/2} \\
    \|a\|_{\Ell^\infty(\tr\otimes\tau)}=\|x\|_{\Ell^\infty(\tr\otimes\tau)}^{1/2}.
  \end{gather*}
  Then
  \begin{math}
    x\mu_\iota=a(b\mu_\iota^{1/2}-\mu_\iota^{1/2}b)\mu_\iota^{1/2}+a\mu_\iota^{1/2}b\mu_\iota^{1/2}
  \end{math},
  so that the Cauchy-Schwarz inequality yields
  \begin{align*}
    \|x\mu_\iota\|_{\Sch^1(\Sch^1_n)}
      &\le\|a\|_{\Ell^\infty(\tr\otimes\tau)}
      \|(b\mu_\iota^{1/2}-\mu_\iota^{1/2}b)\mu_\iota^{1/2}\|_{\Sch^1(\Sch^1_n)}
      +\|a\mu_\iota^{1/2}b\mu_\iota^{1/2}\|_{\Sch^1(\Sch^1_n)}\\
      &\le\|a\|_{\Ell^\infty(\tr\otimes\tau)}
      \|b\mu_\iota^{1/2}-\mu_\iota^{1/2}b\|_{\Sch^2(\Sch^2_n)}
      +\|a\|_{\Ell^2(\tr\otimes\tau)}\|b\|_{\Ell^2(\tr\otimes\tau)}
  \end{align*}
  and therefore our claim. Now complex interpolation yields
  \begin{equation*}
    \limsup\|x\mu_\iota^{1/p}\|_{\Sch^p_{\vphantom{n}}(\Sch^p_n)}\le\|x\|_{\Ell^p(\tr\otimes\tau)}
  \end{equation*}
  for~$x\in\Ell^\infty(\tr\otimes\tau)$ and~$p\in[1,\infty]$.  In
  fact, consider the function~$f(z)=u\abs{x}^{pz}\mu_\iota^z$ analytic
  in the strip~$0<\Im z<1$ and continuous on its closure; then $f(\iu
  t)$~is a product of unitaries for~$t\in\R$, so that
  \begin{math}
    \|f(\iu t)\|_{\vn(\tr\otimes\tau)}=1.
  \end{math}
  Also
  \begin{align*}
    \|f(1+\iu t)\|_{\Sch^1(\Sch^1_n)}
    &=\|\abs{x}^p\mu_\iota\|_{\Sch^1(\Sch^1_n)}.
  \end{align*}
  As $\Sch^p(\Sch^p_n)$~is the complex interpolation space~$(\Sch^\infty(\mat_n),\Sch^1(\Sch^1_n))_{1/p}$,
  \begin{align*}
    \|x\mu_\iota^{1/p}\|_{\Sch^p_{\vphantom{n}}(\Sch^p_n)}=
    \|f(1/p)\|_{\Sch^p_{\vphantom{n}}(\Sch^p_n)} \le
    \||x|^p\mu_\iota\|_{\Sch^1(\Sch^1_n)}^{1/p}.
  \end{align*}
Then, taking the upper limit and using the estimate on~$\Sch^1(\Sch^1_n)$,
 \begin{align*}
\limsup \|x\mu_\iota^{1/p}\|_{\Sch^p_{\vphantom{n}}(\Sch^p_n)}&\le 
\limsup \||x|^p\mu_\iota\|_{\Sch^1(\Sch^1_n)}^{1/p}\\ &\le
\||x|^p\|_{\Ell^1(\tr\otimes\tau)}^{1/p}=\|x\|_{\Ell^p(\tr\otimes\tau)}.
\end{align*}
  The reverse inequality is obtained by duality; first note that for~$y\in{\vn(\tr\otimes\tau)}$, 
\begin{equation*}\lim \tr\otimes\tr(y\mu_\iota)= \tr\otimes\tau(y).\end{equation*}
With the above notation and the inequality for~$p'$,
\begin{align*}
 \|x\|_{\Ell^p(\tr\otimes\tau)}^p&=\tau (|x|^p)=\lim \tr |x|^p\mu_\iota
 = \lim\tr \mu_\iota^{1-1/p}|x|^{p-1}u^*x\mu_\iota^{1/p}\\
&\le
\limsup \|\mu_\iota^{1-1/p}|x|^{p-1}\|_{\Sch^{p'}_{\vphantom{n}}(\Sch^{p'}_n)}
\|x\mu_\iota^{1/p}\|_{\Sch^p_{\vphantom{n}}(\Sch^p_n)}\\&=
\limsup \||x|^{p-1}\mu_\iota^{1-1/p}\|_{\Sch^{p'}_{\vphantom{n}}(\Sch^{p'}_n)}
\|x\mu_\iota^{1/p}\|_{\Sch^p_{\vphantom{n}}(\Sch^p_n)} \\&\le
\||x|^{p-1}\|_{\Ell^{p'}(\tr\otimes\tau)}\limsup\|x\mu_\iota^{1/p}\|_{\Sch^p_{\vphantom{n}}(\Sch^p_n)},
\end{align*}
 so that
  \begin{equation*}
    \limsup\|x\mu_\iota^{1/p}\|_{\Sch^p_{\vphantom{n}}(\Sch^p_n)}=\|x\|_{\Ell^p(\tr\otimes\tau)}^p.\qedhere
  \end{equation*}
\end{proof}
\begin{rem}Let $\mu$~be any positive diagonal operator with $\tr
  \mu=1$ and $p\ge 2$; then $\|x\mu^{1/p}\|_{\Sch^p(\Sch^p_n)}\le \|x\|_{\Ell^p}$
  for all $x\in \vn(\tr\otimes\tau)$. The Reiter condition is
  only necessary to go below exponent~2.
\end{rem}

We could also have used interpolation with a two-sided
approximation by Reiter means. We would have obtained
\begin{equation*}
  \limsup\|\mu_\iota^{1/2p}x\mu_\iota^{1/2p}\|_{\Sch^p_{\vphantom{n}}(\Sch^p_n)}
  =\|x\|_{\Ell^p(\tr\otimes\tau)}.
\end{equation*} 
This formula is in the spirit of the first approach of this section.

\section{Transfer of lacunary sets into lacunary matrix patterns}
\label{sec:lac}

As a first application of Theorem~\ref{trans2}, let us mention that it
provides a shortcut for some arguments in \cite{ha98}, as it permits us to
transfer lacunary subsets of a discrete group $\Ga$ into lacunary matrix
patterns in $\Ga\times\Ga$.  Let us first introduce the following
terminology.

\begin{dfn}\label{unclp:def}
  Let $\Ga$~be a discrete group and~$\La\subseteq\Ga$.  Let $X$~be the
  reduced $\Cont^*$-algebra $\Cont$ of~$\Ga$ or its noncommutative
  Lebesgue space~$\Ell^p$ for~$p\in\iod[1,\infty]$.
  \begin{enumerate}
  \item The set~$\La$ is \emph{unconditional} in~$X$ if the Fourier
    series of every~$x\in X_\La$ converges unconditionally; i.e., there is a
    constant~$D$ such that
    \begin{equation*}
      \biggnorm{\sum_{\ga\in\La'}x_\ga\eps_\ga\la_\ga}_{X}
      \le D\|x\|_{X}
    \end{equation*}
    for finite~$\La'\subseteq\La$
    and~$\eps_\ga\in\T$. The minimal constant~$D$ is the
    \emph{unconditional constant of~$\La$ in~$X$}.
  \item If~$X=\Cont$, let~$\tilde X=\Sch^\infty\otimes\Cont$; if~$X=\Ell^p$, let~$\tilde X=\Ell^p(\tr\otimes\tau)$. The set~$\La$ is \emph{completely unconditional} in~$X$ if the
    Fourier series of every~$x\in\tilde X_\La$ converges
    unconditionally; i.e., there is a constant~$D$ such
    that
  \begin{equation*}
    \biggnorm{\sum_{\ga\in\La'}x_\ga\otimes\eps_\ga\la_\ga}_{\tilde X}
    \le D\|x\|_{\tilde X}
  \end{equation*}
  for finite~$\La'\subseteq\La$
  and~$\eps_\ga\in\T$. The minimal constant~$D$ is the
  \emph{complete unconditional constant of~$\La$ in~$X$}.
  \end{enumerate}
\end{dfn}

Unconditional sets in~$\Ell^p$ have been introduced as ``$\Lambda(p)$
sets'' in \cite[Definition~1.1]{ha98} for $p>2$. If $\Ga$~is abelian,
they are Walter Rudin's $\Lambda(p)$ sets if $p>2$ and his
$\Lambda(2)$ sets if $p<2$ (see \cite{ru60,bo01}).  Asma Harcharras
(\cite[Definition~1.5, Comments~1.9]{ha98}) called completely
unconditional sets in~$\Ell^p$ ``$\Lambda(p)_{\cb}$ sets''
if~$p\in\io[2,\infty]$, and ``$\mathrm{K}(p)_{\cb}$ sets''
if~$p\in\iog[1,2]$; her definitions are equivalent to ours by the
noncommutative Khinchin inequality. 

Sets that are unconditional in $\Cont$ have been introduced as
``unconditional Sidon sets'' in \cite{bo81a}. If $\Ga$ is amenable,
Fourier multipliers are automatically c.b.\ on $\Cont_\La$, so that
such sets are automatically completely unconditional in~$\Cont$, and
there are at least three more equivalent definitions for the
counterpart of Sidon sets in an abelian group. If $\Ga$ is
nonamenable, these definitions are no longer all equivalent, and our
notion of completely unconditional sets in~$\Cont$ corresponds to Marek Bo\.zejko's ``c.b.\ Sidon sets.''

\begin{dfn}
  \label{uncsp:def}
  Let $1\le p\le\infty$ and $I$ be a subset of the product $\Row\times\Col$ of two index sets.
    \begin{enumerate}
  \item The set~$I$ is \emph{unconditional} in the Schatten-von-Neumann class~$\Sch^p$ associated with $\borne(\ell^2_\Col,\ell^2_R)$ if the matrix
    representation of every~$x\in\Sch^p_I$ converges
    unconditionally; i.e., there is a constant~$D$ such that
    \begin{equation*}
      \Bigl\|\sum_{q\in I'}x_q\eps_q\e_q\Bigr\|_p \le D
      \|x\|_p
    \end{equation*}
    for finite~$I'\subseteq I$
    and~$\eps_q\in\T$. The minimal constant~$D$ is the
    \emph{unconditional constant of $I$ in $\Sch^p$}.
  \item The set $I$ is \emph{completely} unconditional in $\Sch^p$ if
    the matrix representation of every~$x\in\Sch^p_I(\Sch^p_{\vphantom{I}})$
    converges unconditionally; i.e., there is a constant~$D$ such that
    \begin{equation*}\label{cu}
      \Bigl\|\sum_{q\in I'}x_q\tens\eps_q\e_q\Bigr\|_p \le D
      \|x\|_p
    \end{equation*}
    for finite~$I'\subseteq I$
    and~$\eps_q\in\T$. The minimal constant~$D$ is the
    \emph{complete unconditional constant of $I$ in $\Sch^p$}.
  \end{enumerate}
\end{dfn}

Harcharras called unconditional and
completely unconditional sets in~$\Sch^p$ ``$\sigma(p)$ sets'' and
``$\sigma(p)_{\cb}$ sets'', respectively (\cite[Definitions
  4.1~and~4.4, Remarks~4.6$\,(iv)$]{ha98}); she supposed $p<\infty$, so that  her definitions are
equivalent to ours by the noncommutative Khin\-chin inequality.

\begin{prp}\label{trans:set}
  Let\/ $\Ga$~be a discrete group. Let\/ $\La\subseteq\Ga$ and consider
  the associated Toeplitz set\/
  $\HLa=\{(r,c)\in\Ga\times\Ga:\row\col^{-1}\in \La\}$. Let\/ $p\in\iod[1,\infty]$.
  \begin{enumerate}
  \item If\/ $\Ga$ is amenable, then\/ $\La$ is unconditional in\/ $\Ell^p$
    if\/ $\HLa$ is unconditional in\/ $\Sch^p$.
  \item If\/ $\La$ is completely unconditional in\/ $\Ell^p$, then\/ $\HLa$
    is completely unconditional in\/ $\Sch^p$. The converse holds if\/
    $\Ga$ is amenable.
  \end{enumerate}
\end{prp}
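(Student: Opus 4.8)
The statement is Proposition~\ref{trans:set}, and the natural strategy is to reduce unconditionality to a supremum of norms of Fourier (resp.\ Schur) multipliers and then invoke Theorem~\ref{trans2}. Recall that $\La$ is (completely) unconditional in $\Ell^p$ with constant $D$ precisely when $\sup\|\eps\|_{\Fourier(\Ell^p_\La)}\le D$ (resp.\ $\sup\|\eps\|_{\Fourier_\cb(\Ell^p_\La)}\le D$), the supremum taken over all choices of signs $\eps=(\eps_\ga)_{\ga\in\La}$ with $\eps_\ga\in\T$; likewise $\HLa$ is (completely) unconditional in $\Sch^p$ with constant $D$ iff the analogous supremum of $\|\He\|_{\Schur(\Sch^p_{\mkern-3mu\HLa})}$ (resp.\ $\|\He\|_{\Schur_\cb(\Sch^p_{\mkern-3mu\HLa})}$) is at most $D$, where $\He_{r,c}=\eps_{rc^{-1}}$. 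The key point is that the map $\eps\mapsto\He$ is a bijection between unimodular sequences on $\La$ and unimodular \emph{Toeplitz} patterns on $\HLa$, so that the two suprema range over matching index sets.

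\emph{First} I would treat part~$(b)$, the completely unconditional case, which is the clean one. Fix signs $\eps_\ga\in\T$ for $\ga\in\La$ and form the Toeplitz matrix $\He$. By Theorem~\ref{trans2}$\,(a)$ (applied with $\psi(t)=t^p$, so that $\Ell^\psi_\La(\tr\otimes\tau)=\Ell^p_\La(\tr\otimes\tau)$ encodes the complete norm and $\Sch^\psi_{\mkern-3mu\HLa}(\Sch^\psi)=\Sch^p_{\mkern-3mu\HLa}(\Sch^p)$), we have exactly
\begin{equation*}
  \|\eps\|_{\Fourier_\cb(\Ell^p_\La)}=\|\He\|_{\Schur_\cb(\Sch^p_{\mkern-3mu\HLa})}
\end{equation*}
\emph{whenever $\Ga$ is amenable}. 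Taking the supremum over all sign choices $\eps$ gives the equality of the complete unconditional constants, hence the equivalence in the amenable case. For the forward implication without amenability, I would use Lemma~\ref{trans1} instead: it gives, for every $\psi$ and in full generality, $\|\He\|_{\Schur(\Sch^\psi_{\mkern-3mu\HLa}(\Sch^\psi))}\le\|\eps\|_{\Fourier(\Ell^\psi_\La(\tr\otimes\tau))}$; applied to $\psi(t)=t^p$ and to each sign sequence, taking suprema shows that the complete unconditional constant of $\HLa$ in $\Sch^p$ is at most that of $\La$ in $\Ell^p$, so complete unconditionality transfers from $\La$ to $\HLa$ always. This proves $(b)$.

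\emph{Then} part~$(a)$: here only the \emph{plain} norm is at stake, and there is no inequality bounding $\|\eps\|_{\Fourier(\Ell^p_\La)}$ by $\|\He\|_{\Schur(\Sch^p_{\mkern-3mu\HLa})}$ in general (the subtlety already flagged after Lemma~\ref{trprp}, see Remark~\ref{b-cb}). This is where amenability is genuinely needed and where the main obstacle lies. The fix is that when $\Ga$ is amenable, Fourier multipliers on $\Cont_\La$ and, one expects, on $\Ell^p_\La$ are automatically completely bounded with equal norm; more precisely, the transfer of part~$(b)$ already yields, for amenable $\Ga$,
\begin{equation*}
  \|\eps\|_{\Fourier(\Ell^p_\La)}\le\|\eps\|_{\Fourier_\cb(\Ell^p_\La)}
  =\|\He\|_{\Schur_\cb(\Sch^p_{\mkern-3mu\HLa})}\le\|\He\|_{\Schur(\Sch^p_{\mkern-3mu\HLa})}\cdot(\text{const?}),
\end{equation*}
and since for Toeplitz patterns $\|\He\|_{\Schur_\cb}=\|\He\|_{\Schur}$ is known (Bo\.zejko--Fendler, cf.\ the discussion before Theorem~\ref{trans2}, at least for $p=\infty$; for general $p$ one invokes the corresponding statement or passes through the $\Cont^*$-level), the middle complete norm equals the plain Schur norm of $\He$ on $\Sch^p_{\mkern-3mu\HLa}$. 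Hence $\|\eps\|_{\Fourier(\Ell^p_\La)}\le\|\He\|_{\Schur(\Sch^p_{\mkern-3mu\HLa})}$ for amenable $\Ga$; taking suprema over sign sequences shows that if $\HLa$ is unconditional in $\Sch^p$ then $\La$ is unconditional in $\Ell^p$, which is $(a)$. The one delicate checkpoint is the identification $\|\He\|_{\Schur_\cb(\Sch^p_{\mkern-3mu\HLa})}=\|\He\|_{\Schur(\Sch^p_{\mkern-3mu\HLa})}$ for a Toeplitz $\He$ and $1<p<\infty$; I would either cite it from the literature on Toeplitz Schur multipliers or deduce it from Theorem~\ref{trans2}$\,(a)$ together with the fact that, for amenable $\Ga$, the Fourier multiplier $\eps$ on $\Ell^p_\La$ has equal norm and complete norm — the standard averaging-over-the-group argument that underlies \cite[Corollary~1.8]{dh85} extends from $\Cont$ to $\Ell^p$. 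If instead one prefers to keep the argument self-contained, one observes that unconditionality in $\Sch^p$ is itself equivalent, via the noncommutative Khinchin inequality, to a two-sided square-function estimate whose constant is manifestly the same as its completely bounded version, so that $(a)$'s hypothesis already yields complete unconditionality of $\HLa$ and one is back in the setting of $(b)$.
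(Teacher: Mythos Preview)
Your argument for part~$(b)$ is correct and essentially the paper's: Lemma~\ref{trans1} gives the forward implication without amenability, and Theorem~\ref{trans2}$\,(b)$ (or equivalently~$(a)$ with $\psi(t)=t^p$) gives the converse for amenable~$\Ga$.

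Your argument for part~$(a)$, however, has a genuine gap. You try to pass through complete norms and then collapse them back to plain norms, but each of the three routes you propose for the ``delicate checkpoint'' fails or is unsubstantiated:
\begin{itemize}
\item The claim that Fourier multipliers on~$\Ell^p_\La$ are automatically completely bounded for amenable~$\Ga$ is \emph{false} for $p\in\io[1,2]\cup\io[2,\infty]$: Remark~\ref{b-cb} exhibits bounded Fourier multipliers on~$\Ell^p$ (over an infinite abelian group) that are not c.b. The averaging argument behind \cite[Corollary~1.8]{dh85} does \emph{not} extend from~$\Cont$ to~$\Ell^p$.
\item Whether Schur multipliers on~$\Sch^p$ are automatically c.b.\ for $p\notin\{1,2,\infty\}$ is explicitly flagged as open in Remark~\ref{b-cb}; you cannot cite it.
\item The Khinchin route, asserting that $\sigma(p)$ implies $\sigma(p)_\cb$, is not established in the paper and would in any case make~$(a)$ a corollary of~$(b)$; the paper keeps them separate precisely because no such implication is claimed.
\end{itemize}

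The fix is much simpler than your detour: the paper invokes Lemma~\ref{trprp}$\,(a)$ directly. For amenable~$\Ga$ that lemma gives, for each sign sequence~$\eps$, the \emph{plain-norm} inequality
\[
  \|\eps\|_{\Fourier(\Ell^p_\La)}\le\|\He\|_{\Schur(\Sch^p_{\mkern-3mu\HLa})},
\]
with no passage through complete norms at all. Taking the supremum over~$\eps$ immediately yields~$(a)$. You overlooked this lemma, which is exactly the tool designed for the non-c.b.\ direction.
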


\begin{proof}
  The first part of $(b)$ follows by the argument of the proof of
  \cite[Proposition~4.7]{ha98}; let us sketch it. Consider the
  isometric embedding of the space $\Sch^p_{\mkern-3mu\HLa}(\Sch^p)$ in
  $\Ell^p_\La(\tr\otimes\tr\otimes\tau)$ that is given in the proof of
  Lemma~\ref{trans1} and apply the equivalent Definition~1.5 in
  \cite{ha98} of the complete unconditionality of~$\La$: this gives
  the complete unconditionality of~$\HLa$ in the equivalent
  Definition~4.4 in \cite{ha98}.

  Unconditionality in $\Ell^p$ expresses the uniform boundedness of
  relative unimodular Fourier multipliers on $\Ell^p_\La$; complete
  unconditionality expresses their uniform complete boundedness.
  Unconditionality in $\Sch^p$ expresses the uniform boundedness of
  relative unimodular Schur multipliers on $\Sch^p_{\mkern-3mu\HLa}$; complete
  unconditionality expresses their uniform complete boundedness. The
  second part of $(b)$ follows therefore from
  Theorem~\ref{trans2}$\,(b)$ and $(a)$ follows from
  Theorem~\ref{trprp}.
\end{proof}

\begin{rem}
  This transfer does not pass to the limit $p=\infty$ in~$(b)$ and is
  void in~$(a)$. Nicholas Varopoulos proved that unconditional
  sets in $\Sch^\infty$ are finite unions of patterns whose rows or
  whose columns contain at most one element, and this excludes sets of
  the form $\HLa$ for any infinite $\La$ (\cite[Theorem~4.2]{va69}, see
  \cite[\S\,5]{ne06} for a reader's guide).
\end{rem}

\begin{rem}
  See \cite[Remark~11.3]{ne06} for an illustration of
  Proposition~4.3$\,(b)$ in a particular context.
\end{rem}

\begin{rem}
  Let $p$ be an even integer greater than or equal to 4. The existence of a
  $\sigma(p)_\cb$ set that is not a $\sigma(q)$ set for any $q>p$
  (\cite[Theorem~4.9]{ha98}) becomes a direct consequence of Walter
  Rudin's construction (\cite[Theorem~4.8]{ru60}) of a $\Lambda(p)$ set
  that is not a $\Lambda(q)$ set for any $q>p$, because this set has
  property $\mathrm B(p/2)$ (\cite[Definition~2.4]{ha98}) and is
  therefore $\Lambda(p)_\cb$ by \cite[Theorem~1.13]{ha98} (in fact, it
  is even ``1-unconditional'' in $\Ell^p$ because $\mathrm B(p/2)$ is
  ``$p/2$-independence'' (\cite[\S\,11]{ne06})).
\end{rem}

\begin{rem}
  In the same way, \cite[Theorem~5.2]{ha98} becomes a mere
  reformulation of \cite[Proposition 3.6]{ha98} if one remembers that
  the Toeplitz Schur multipliers are 1-complemented in the Schur
  multipliers for an amenable discrete group and for all classical
  norms.  Basically, results on $\Lambda(p)_{\cb}$ sets produce results
  on $\sigma(p)_{\cb}$ sets.
\end{rem}

Let us now estimate the complete unconditional constant of sumsets. In
the case~$\Ga=\Z$, Harcharras (\cite[Prop.~2.8]{ha98}) proved that
a completely unconditional set in~$\Ell^p$ cannot contain the sumset
of characters~$A+A$ for arbitrary large finite sets~$A$. In
particular, if $\La\supseteq A+A$ with $A$ infinite, then $\La$~is not
a completely unconditional set in $\Ell^p$. Thus, her proof provided examples
of~$\Lambda(p)$ sets that are not $\Lambda(p)_\cb$ sets.

We generalise Harcharras' result in two directions. Compare~\cite[\S\,1.4]{lr75}.

\begin{prp}\label{har:lp}
  Let\/ $\Ga$~be a discrete group and\/~$p\ne2$. A completely
  unconditional set in\/~$\Ell^p$ cannot contain the sumset of two
  arbitrarily large sets. More precisely, let\/ $\Row$~and\/~$\Col$ be
  subsets of\/~$\Ga$ with\/~$\card{\Row}\ge n$ and\/ $\card{\Col}\ge n^3$. Then, for
  any\/~$p\ge1$, the complete unconditional constant of the
  sumset\/~$\Row\Col$ in\/~$\Ell^p$ is at least\/~$n^{|1/2-1/p|}$.
\end{prp}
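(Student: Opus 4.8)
The plan is to extract from $\Row$ and $\Col$ two sets $\Row'$ and $\Col'$ of cardinality $n$ whose products are pairwise distinct, so that they may play the role of rows and columns; then to transfer unimodular Schur multipliers on the resulting $n\times n$ matrix block back into unimodular Fourier multipliers on $\Ell^p_{\Row\Col}$ by Remark~\ref{transg}; and finally to invoke the conditionality of the matrix units of $\Sch^p_n$, witnessed by the discrete Fourier matrix. The step that requires the most care is the combinatorial extraction.

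For the extraction, pick $\Row'=\{r_1,\dots,r_n\}\subseteq\Row$ arbitrarily and put $D=(\Row')^{-1}\Row'\setminus\{\id\}$. The set $(\Row')^{-1}\Row'$ is symmetric, hence so is $D$, and $\card D\le n(n-1)$. Consider the graph on $\Col$ that joins $c$ to $c'$ exactly when $c'c^{-1}\in D$: its maximal degree is at most $\card D\le n^2-n$, so the greedy bound $\alpha(G)\ge\card\Col/(\Delta(G)+1)$ together with $\card\Col\ge n^3\ge n(n^2-n+1)$ produces an independent set $\Col'=\{c_1,\dots,c_n\}\subseteq\Col$ of cardinality $n$. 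If $r_ic_j=r_kc_l$, then $r_k^{-1}r_i=c_lc_j^{-1}$ lies in $(\Row')^{-1}\Row'$, and independence of $\Col'$ forbids it to lie in $D$; so it equals $\id$, whence $c_j=c_l$ and then $r_i=r_k$. Thus the $n^2$ products $r_ic_j$ are pairwise distinct and all belong to $\Row\Col$. What makes this work is the symmetry of $(\Row')^{-1}\Row'$: it keeps the degree of the auxiliary graph below $n^2$, so that a single greedy pass suffices given the room $\card\Col\ge n^3$.

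For the transfer, apply Remark~\ref{transg} with $\La=\Row\Col$ and the finite sequences $(r_i)_{i\le n}$, $(c_j)_{j\le n}$: since $r_ic_j\in\La$ for all $i,j\le n$, the index set $\bLa$ is the full square $\{1,\dots,n\}^2$, so for every $\ph\in\C^\La$ the norm of the Schur multiplier $\bph=\bigl(\ph(r_ic_j)\bigr)_{1\le i,j\le n}$ on $\Sch^p_{\bLa}(\Sch^p)$ is at most the norm of $\Id_{\Sch^p}\otimes\Fourier_\ph$ on $\Ell^p_\La(\tr\otimes\tau)$. Because the $r_ic_j$ are pairwise distinct, every unimodular $n\times n$ matrix $(\eps_{ij})$ is of this form for a $\ph$ that is unimodular on $\La$, namely $\ph(r_ic_j)=\eps_{ij}$ and $\ph\equiv1$ elsewhere. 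Taking suprema over unimodular symbols and matching the defining expressions of Definitions~\ref{unclp:def} and~\ref{uncsp:def}, the complete unconditional constant of $\Row\Col$ in $\Ell^p$ is at least the complete unconditional constant of the square $\{1,\dots,n\}^2$ in $\Sch^p$, and a fortiori at least the plain unconditional constant of that square.

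It remains to bound this last constant from below. Let $\omega$ be a primitive $n$-th root of unity, let $F_n=(\omega^{ij})_{1\le i,j\le n}$, and let $J_n$ be the all-ones $n\times n$ matrix. Then $J_n$ has rank~$1$ with $\|J_n\|_{\Sch^p}=n$, while $F_n/\sqrt n$ is unitary, so $\|F_n\|_{\Sch^p}=n^{1/2+1/p}$. The unimodular Schur multiplier $(\omega^{ij})$ carries $J_n$ to $F_n$, and $(\omega^{-ij})$ carries $F_n$ to $J_n$; the first yields the lower bound $n^{1/p-1/2}$ (effective when $p\le2$) and the second the lower bound $n^{1/2-1/p}$ (effective when $p\ge2$) for the unconditional constant of the square, which is therefore at least $n^{\abs{1/2-1/p}}$. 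Combined with the transfer, this proves the quantitative claim. The qualitative statement follows: if $\La$ is completely unconditional in $\Ell^p$ with finite complete unconditional constant $D$, then so is every subset of $\La$, with constant at most $D$; so if $\La$ contained sumsets $\Row_m\Col_m$ with $\card\Row_m\ge m$ and $\card\Col_m\ge m$ for every $m$, then applying the quantitative bound with $m=n^3$ would give $n^{\abs{1/2-1/p}}\le D$ for all $n$, which is impossible when $p\ne2$.
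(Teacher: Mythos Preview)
Your proof is correct and follows essentially the same strategy as the paper: extract from $\Row$ and $\Col$ an $n\times n$ ``square'' with pairwise distinct products, transfer unimodular Schur multipliers on that square back to unimodular Fourier multipliers on $\Ell^p_{\Row\Col}$ via Remark~\ref{transg}, and invoke the conditionality of the matrix units in~$\Sch^p_n$. The only differences are in packaging: the paper does the extraction by a direct induction (at step~$m$, avoid the at most $(n(n-1)+1)(m-1)<n^3$ forbidden elements $r_i^{-1}r_kc_l$), whereas you rephrase this greedy argument as an independent-set bound in an auxiliary graph on~$\Col$; and the paper cites \cite[Lemma~8.1.5]{pi98} for the value $n^{|1/2-1/p|}$ of the unconditional constant of $\Sch^p_n$, whereas you exhibit the discrete Fourier matrix as an explicit witness. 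Both variations are harmless and your explicit witness makes the argument pleasantly self-contained.
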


\begin{proof} Let $r_1,\dots,r_{n}$ be pairwise
  distinct elements in $\Row$. We shall select inductively elements
  $c_1,\dots,c_{n}$ in $\Col$ such that the
  $r_ic_j$ are pairwise distinct.
Assume there are $c_1,\dots,c_{m-1}$ such that the induction hypothesis
\begin{equation*}
  \forall\, i,k\le n\ \forall\,j,l\le m-1\quad
  (i,j)\ne(k,l)\ \imp\ \row_i\col_j\ne\row_k\col_l.
\end{equation*}
holds. We are looking for an element $c_m\in\Col$ such that 
\begin{equation*}
    \forall\, i,k\le n\ 
    \forall\, l\le m-1\quad 
    \row_i\col_m\ne\row_k\col_l.
  \end{equation*}
  Such an element exists as long as $m\le n$, because the set $\{r_i^{-1}r_kc_l:i,k\le n,\,\allowbreak l\le m-1\}$ has at most $\bigl(n(n-1)+1\bigr)(m-1)<n^3$ elements.  

  The end of the proof is the same as Harcharras'.  The unconditional
  constant of the canonical basis of elementary matrices in~$\Sch^p_n$
  is~$n^{|1/2-1/p|}$; in particular, there is an unimodular Schur
  multiplier~$\bph$ on~$\Sch^p_n$ of norm~$n^{|1/2-1/p|}$ (which is
  also its complete norm, by the way; see \cite[Lemma~8.1.5]{pi98}).
  Let $\La$ be the sumset $\{r_ic_j:i,j\le n\}$;
  as the $r_ic_j$ are pairwise distinct, we may define a
  sequence~$\ph\in\C^\La$ by~$\ph_{\row_i\col_j}=\bph_{i,j}$. By
  Remark~\ref{transg}, the complete norm of the Fourier
  multiplier~$\ph$ on~$\Ell^p_\La$ is bounded below by the complete
  norm of the Schur multiplier~$\bph$ on~$\Sch^p_I$.
\end{proof}
\begin{exa}
  $\La=\{2^i-2^j:i>j\}$~does not form a complete~$\Lambda(p)$ set for
  any~$p\ne2$.  Indeed, \ $\{2^i-2^j\}=\La\cup-\La$ does not, and if
  $\La$ did, then so would $-\La$ and~$\La\cup-\La$.
\end{exa}
  
\section{\texorpdfstring{Toeplitz Schur multipliers on~$\Sch^p$ for~$p<1$}{Toeplitz Schur multipliers on Sp for p<1}}\label{sec:tsp1}

When~$0<p<1$, a complete characterisation of bounded Schur multipliers of 
Toeplitz type has been obtained by Alexey Alexandrov and Vladimir Peller in \cite[Theorem~5.1]{AlPe02}.
This result was an easy consequence of their deep results on Hankel Schur 
multipliers. The transfer approach provides a direct proof.

\begin{cor}\label{AP}
  Let\/~$0<p<1$. Let\/ $\Ga$~be a discrete abelian group with dual
  group\/~$G$. Let\/ $\ph$~be a sequence indexed by\/~$\Ga$ and define the
  associated Toeplitz matrix\/~$\Hph\in\C^\HLa$
  by\/~$\Hph(r,c)=\ph(\row\col^{-1})$ for\/~$(\row,\col)\in\Ga\times\Ga$.
  Then the following are equivalent:
\begin{enumerate}
\item the sequence\/~$\ph$ is the Fourier transform of an atomic measure\/~$\mu=\sum a_g \delta_{g}$ on\/~$G$ with\/~$\sum\abs{a_g}^p\le 1$;
\item the Fourier multiplier\/~$\ph$ is contractive on\/~$\Ell^p$;
\item the Fourier multiplier\/~$\ph$ is contractive on\/~$\Ell^p(\Sch^p)$;
\item the Schur multiplier\/~$\Hph$ is contractive on\/~$\Sch^p$;
\item the Schur multiplier\/~$\Hph$ is contractive on\/~$\Sch^p(\Sch^p)$.
\end{enumerate}
 \end{cor}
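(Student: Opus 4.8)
The plan is to establish the cycle of implications $(1)\Rightarrow(3)\Rightarrow(2)\Rightarrow(1)$ and $(3)\Rightarrow(5)\Rightarrow(4)$, together with $(4)\Rightarrow(2)$ and $(5)\Rightarrow(3)$ — in fact it is cleanest to prove $(1)\Rightarrow(5)$, then $(5)\Rightarrow(3)$, $(3)\Rightarrow(2)$, $(2)\Rightarrow(1)$, and finally the elementary equivalences $(4)\Leftrightarrow(5)$ and $(3)\Leftrightarrow(2)$ that come for free from the transfer machinery. The point is that all of $(2)$--$(5)$ are, by Lemma~\ref{trprp} and Lemma~\ref{trans1} (whose proofs make no use of $p\ge1$ — one only needs $\psi(t)=t^p$ to be continuous, nondecreasing and vanishing only at $0$, which holds for $p<1$), sandwiched between the norm of $\ph$ on $\Ell^p$ and the norm of $\Hph$ on $\Sch^p(\Sch^p)$; so once one shows the extreme implications $(1)\Rightarrow(5)$ and $(2)\Rightarrow(1)$, the rest collapses.

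First I would check $(1)\Rightarrow(5)$: if $\mu=\sum a_g\delta_g$ with $\sum|a_g|^p\le1$, then $\Fourier_\ph$ is convolution by $\mu$, i.e. on $\Ell^p(\Sch^p)$-valued trigonometric series it acts as $x\mapsto\sum_g a_g (x * \delta_g)$, and translation by $g$ is isometric. Since $p<1$, the quasi-norm is $p$-subadditive (indeed $\tr\otimes\tr\,\psi$ is additive on orthogonal pieces and $\|\cdot\|^p$ is subadditive), so $\|\Fourier_\ph x\|^p_p\le\sum_g|a_g|^p\|x\|^p_p\le\|x\|^p_p$; transferring via Lemma~\ref{trans1} gives that $\Hph$ is contractive on $\Sch^p(\Sch^p)$. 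Strictly, for an infinite atomic measure one first treats finite sums and passes to the limit, which is routine because the Fourier coefficients depend continuously on $\mu$ in the relevant topology.

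The implications $(5)\Rightarrow(3)$ and $(3)\Rightarrow(2)$ are exactly Lemma~\ref{trprp}$\,(b)$ and its obvious ``diagonal'' specialisation (taking the $\Sch^p$-valued version down to the scalar version by compressing to a one-dimensional corner), while $(5)\Rightarrow(4)$ and $(3)\Rightarrow(2)$ restricting to scalar matrices are trivial; the only substantive direction left is $(2)\Rightarrow(1)$. For this I would argue as follows: if $\Fourier_\ph$ is contractive on $\Ell^p$, apply it to the Dirichlet-type kernels or, better, test against the constant function $\la_\id=1$, which has $\|1\|_{\Ell^p}=1$, to see that $\ph$ must itself arise from a measure; more precisely, contractivity of a Fourier multiplier on $\Ell^p$ with $p<1$ forces, by a duality/extreme-point argument (the unit ball of $\Ell^p$ for $p<1$ has the $\la_g$ — equivalently point masses on $G$ after dualising — among its extreme rays, and a contraction must map the generating element $1$ into the closed absolutely $p$-convex hull of the $\la_g$), the sequence $\ph$ to be the Fourier transform of an atomic measure with $\sum|a_g|^p\le1$. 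This is the step I expect to be the main obstacle: making precise the claim that contractivity on the non-locally-convex space $\Ell^p$ pins down the multiplier to be a $p$-summable combination of translations. I would handle it by exploiting that on $\Ell^p$ with $p<1$ one has, for $x=\sum_{g\in F}a_g\la_g$ a finite combination with $g$ ranging over a finite set $F\subseteq G$ of ``sufficiently spread'' characters, a lower estimate $\|x\|_{\Ell^p}^p\ge c\sum|a_g|^p$ is false in general but the reverse ``upper'' estimate $\|x\|_{\Ell^p}^p\le\sum|a_g|^p$ always holds, and then run the argument of \cite[Theorem~5.1]{AlPe02} on the Fourier side: contractivity against a well-chosen family of test functions (Riesz products / Rudin–Shapiro polynomials adapted to $G$) forces the $\ell^p$ bound on $(\ph_g)$ and simultaneously shows each $\ph$ comes from a genuine measure. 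Finally $(4)\Rightarrow(5)$ and $(2)\Rightarrow(3)$ follow because $(4)$ or $(2)$ already implies $(1)$ (the Schur case $(4)$ by first transferring to a Fourier statement via the Toeplitz structure as in Section~\ref{sec:tr}, or directly by the same extreme-point argument on $\Sch^p$), and $(1)$ implies everything; this closes the cycle.
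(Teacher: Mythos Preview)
Your overall architecture matches the paper's proof: use the $p$-triangular inequality for $(a)\Rightarrow(c)$, use the transfer Lemmas~\ref{trprp} and~\ref{trans1} (equivalently Theorem~\ref{trans2}$\,(a)$) to move between the Fourier and Schur formulations, and observe that $(e)\Rightarrow(d)$ is trivial. All of that is fine and is exactly what the paper does.

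The genuine gap is the implication $(b)\Rightarrow(a)$. The paper does not attempt this at all: it simply quotes Daniel Oberlin's theorem \cite{Ober76}, which says that every bounded translation-invariant operator on $\Ell^p(G)$ for $0<p<1$ is convolution by a discrete measure with $p$-summable masses. Your proposed argument for this step is not a proof. The ``extreme-point'' heuristic is misleading: $\Ell^p$ for $p<1$ is not locally convex, its unit ball has no extreme points in the usual sense, and there is no workable duality pairing to exploit. Your test-function idea (Riesz products, Rudin--Shapiro polynomials) does not lead anywhere concrete --- you yourself note that the lower estimate you would need ``is false in general'', and the upper estimate $\|x\|_p^p\le\sum|a_g|^p$ goes the wrong way. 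Oberlin's actual proof uses the Nikishin--Maurey factorisation / the triviality of $(\Ell^p)^*$ on non-atomic spaces for $p<1$ to force the range of $\Fourier_\ph$ into a purely atomic part; this is a real piece of work and is why the paper treats it as a black box. You should do the same: replace your paragraph on $(2)\Rightarrow(1)$ by a citation of \cite{Ober76}, and the proof is then complete and identical to the paper's.
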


\begin{proof}
  The implication~$(d)\Rightarrow(b)$ follows from
  Theorem~\ref{trprp}. The equivalence~$(c)\Leftrightarrow(e)$
  follows from Theorem~\ref{trans2}$\,(a)$. The
  characterisation~$(a)\Leftrightarrow(b)$ is an old result of Daniel Oberlin
  (\cite{Ober76}). It is plain that $(e)\Rightarrow(d)$.  At last, \
  $(a)\Rightarrow(c)$~is obvious by the~$p\mkern1mu$-triangular
  inequality.
\end{proof}

\begin{rem}\label{b-cb}
  As a consequence, we get that the norm of a Toeplitz Schur
  multiplier on~$\Sch^p(\Sch^p)$ coincides with its norm on~$\Sch^p$
  when~$p<1$. If $p\in\{1,2,\infty\}$, this holds for every Schur
  multiplier. Let $p\in\io[1,2]\cup\io[2,\infty]$. Then we still do
  not know whether Schur multipliers are automatically c.b.\
  on~$\Sch^p$. But from \cite[Proposition~8.1.3]{pi98}, we know that
  $(b)$~and~$(c)$ are not equivalent: if~$\Ga$ is an infinite abelian
  group, there is a bounded Fourier multiplier on~$\Ell^p$ that is not
  c.b.  This counterexample is easy to describe: if an infinite set
  $A\subseteq\Ga$ is lacunary enough, the sumset $A+A$ is
  unconditional in~$\Ell^p$ (see \cite[Theorem~5.13]{lr75}). By
  Proposition~\ref{har:lp}, it cannot be completely unconditional.  In
  particular, this shows that in Remark~\ref{trrem} we cannot
  remove the restriction to truncated Toeplitz matrices in the
  computation of the Schur multiplier norm; that is,
  $(b)\Rightarrow(d)$ does not hold.
\end{rem}

\begin{rem}
  Our questions may also be addressed in the case of a compact group like~$\T$.
  A measurable function~$\ph$ on~$\T$ defines
  \begin{itemize}
  \item the Fourier multiplier on measurable functions on~$\T$ by
    $x\mapsto\ph x$;
  \item the Schur multiplier on integral operators on $\Ell^2(\T)$ with kernel a
    measurable function~$x$ on $\T\times\T$ by $x\mapsto\Hph x$, where
    $\Hph(z,w)=\ph(zw^{-1})$.
  \end{itemize}
  Victor Olevskii (\cite{ol96}) constructed a continuous function~$\ph$
  that defines a boun\-ded Fourier multiplier on the space of functions
  with $p$-summable Fourier series endowed with the norm given by
  $\norm{x}=\bigl(\sum\abs{\hat x(n)}^p\bigr)^{1/p}$ for every
  $p\in\io[1,\infty]$, while the corresponding Schur
  multiplier is not bounded on the Schatten-von-Neumann class $\Sch^p$
  of operators on $\Ell^2(\T)$ for any $p\in\io[1,2]\cup\io[2,\infty]$.

\end{rem}

\section{{The Riesz projection and the Hilbert
    transform}}\label{sec:rpht}

In this section, we concentrate on~$\Ga=\Z$, the dual group of~$\T$.

\begin{prp}
  \label{rhcb}
  Let\/ $\rh$~be a linear combination of the identity and the upper
  triangular projection of\/~$\N\times\N$; i.e., there are\/~$z,w\in\C$ so that\/
  $\rh_{i,j}=z$ if\/~$i\le j$ and\/~$\rh_{i,j}=w$ if\/~$i>j$.  Then the norm
  of the Schur multiplier\/~$\rh$ on\/~$\Sch^\psi$ coincides with the norm
  of the Schur multiplier\/~$\rh$ on\/ $\Sch^\psi(\Sch^\psi)$.
\end{prp}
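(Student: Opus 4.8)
The statement concerns a very special Schur multiplier $\rh$ on $\N\times\N$ that is constant on the upper triangle (including the diagonal) and constant on the strictly lower triangle. Writing $\rh = w\,\mathbf 1 + (z-w)\,R$, where $\mathbf 1$ is the all-ones pattern and $R$ is the upper-triangular projection $R_{i,j}=\un_{\{i\le j\}}$, we see that $\Schur_\rh = w\,\Id + (z-w)\,\Schur_R$. The multiplier associated to $\mathbf 1$ is the identity operator, which acts isometrically on both $\Sch^\psi$ and $\Sch^\psi(\Sch^\psi)$; so the whole question reduces to showing that the \emph{matrix Riesz projection} $\Schur_R$ has the same norm on $\Sch^\psi$ as on $\Sch^\psi(\Sch^\psi)$ — but we need this for the full affine combination $w\,\Id+(z-w)\,\Schur_R$, not just for $\Schur_R$ alone, so we cannot literally quote a norm equality for $\Schur_R$ and must keep the two scalars in play.

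The route I would take is \emph{transfer}: realise $\rh$, or rather the relevant truncations of it, as a Toeplitz pattern coming from a Fourier multiplier on $\Z$, and invoke Theorem~\ref{trans2}$\,(a)$ (with $\Ga=\Z$, which is amenable). First I would reduce, as always, to finite truncations: the norm of $\Schur_\rh$ on $\Sch^\psi$ (resp.\ $\Sch^\psi(\Sch^\psi)$) is the supremum over $N$ of its norm restricted to $N\times N$ matrices, so it suffices to compare the two norms on $\Sch^\psi_N$ and $\Sch^\psi_N(\Sch^\psi)$ for each fixed $N$. Now on an $N\times N$ block, the pattern $\{(i,j):1\le i,j\le N\}$ with $i\le j$ on one part and $i>j$ on the other is \emph{not} Toeplitz, but it is the restriction of a Toeplitz pattern on $\Z\times\Z$: take $\La=\Z$ and the sequence $\ph$ on $\Z$ defined by $\ph_k = z$ for $k\le 0$ and $\ph_k = w$ for $k>0$; then $\Hph_{r,c}=\ph_{r-c}$ equals $z$ when $r\le c$ and $w$ when $r>c$, so the restriction of $\Schur_{\Hph}$ to indices $(r,c)$ with, say, $1\le r,c\le N$ (ordered the right way) is exactly $\Schur_\rh$ on $\Sch^\psi_N$. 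Since the norm of a Toeplitz Schur multiplier equals the supremum of the norms of its restrictions to finite square blocks $\Row'\times\Col'$, and each such block (after reindexing the rows and columns by consecutive integers) gives back a copy of our $\rh$-pattern, we get $\|\rh\|_{\Schur(\Sch^\psi)} = \|\Hph\|_{\Schur(\Sch^\psi)}$ — and likewise for the $\Sch^\psi$-valued norms. Then Theorem~\ref{trans2}$\,(a)$ says $\|\Hph\|_{\Schur_{\HLa}(\Sch^\psi)(\Sch^\psi)} = \|\ph\|_{\Fourier(\Ell^\psi_\Z(\tr\otimes\tau))}$, while Lemma~\ref{trprp}$\,(a)$ (the scalar-valued one-sided direction) together with the trivial inequality $\|\Schur_\rh\|_{\Sch^\psi}\le\|\Schur_\rh\|_{\Sch^\psi(\Sch^\psi)}$ pins the scalar Schur norm between the scalar Fourier norm and the vector-valued Schur norm; combining, all four quantities coincide.

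The one point that needs care — and which I expect to be the main obstacle — is the passage between "the pattern on $\Sch^\psi_N$" and "a genuine sub-block of the $\Z\times\Z$ Toeplitz pattern". On $\Z\times\Z$ with the standard order, $\{(r,c):r\le c\}$ and $\{(r,c):r>c\}$ behave correctly on the block $\{1,\dots,N\}\times\{1,\dots,N\}$, but one should double-check the orientation: the upper-triangular projection of $\N\times\N$ as used in Proposition~\ref{rhcb} has $\rh_{i,j}=z$ for $i\le j$, and $r-c\le 0 \Leftrightarrow r\le c$, so with $\ph_k=z$ for $k\le 0$ the match is exact with no transpose needed. I would spell this out carefully, noting that a transpose is an isometry on all these spaces anyway, so even an orientation mismatch would be harmless. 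A second small point: Theorem~\ref{trans2}$\,(a)$ is stated for the Toeplitz set $\HLa$ with $\La=\Ga=\Z$, i.e.\ $\HLa=\Z\times\Z$, so it applies verbatim; and amenability of $\Z$ is what licenses its use. Once these identifications are in place the proof is a two-line chain of equalities, so the substance of the argument is entirely in setting up the transfer correctly.
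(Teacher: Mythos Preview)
Your transfer argument has a genuine gap: the chain of inequalities you assemble does not close. From Lemma~\ref{trprp}$\,(a)$ and the trivial inclusion you get
\[
\|\ph\|_{\Fourier(\Ell^\psi)}\;\le\;\|\Hph\|_{\Schur(\Sch^\psi)}\;\le\;\|\Hph\|_{\Schur(\Sch^\psi(\Sch^\psi))}\;=\;\|\ph\|_{\Fourier(\Ell^\psi(\tr\otimes\tau))},
\]
the last equality being Theorem~\ref{trans2}$\,(a)$. But nothing in the transfer machinery gives you the reverse inequality $\|\ph\|_{\Fourier(\Ell^\psi(\tr\otimes\tau))}\le\|\ph\|_{\Fourier(\Ell^\psi)}$ (or equivalently $\|\Hph\|_{\Schur(\Sch^\psi(\Sch^\psi))}\le\|\Hph\|_{\Schur(\Sch^\psi)}$), and indeed Remark~\ref{b-cb} records that for general Toeplitz multipliers this inequality \emph{fails}. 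So ``all four quantities coincide'' is asserted but not proved; the very content of Proposition~\ref{rhcb} is that for this particular $\rh$ the middle inequality is an equality, and transfer alone cannot deliver that. Note also that the paper \emph{uses} Proposition~\ref{rhcb} as an independent input in the proof of Theorem~\ref{normh}, precisely because the equality $\|\sgn\|_{\Fourier(\Ell^p)}=\|\sgn\|_{\Fourier_\cb(\Ell^p)}$ is not known for general~$p$.

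The paper's proof is instead a direct, elementary block-matrix trick that exploits the special two-value structure of~$\rh$: given $a=(a_{ij})\in\Sch^\psi_m(\Sch^\psi_n)$, one rearranges the $n\times n$ blocks $a_{ij}$ inside a single larger scalar matrix~$\tilde a$ (with zero-padding) so that each block $a_{ij}$ with $i\le j$ lands entirely in the upper triangle of~$\tilde a$ and each block with $i>j$ lands entirely in the strict lower triangle. Then $\Schur_\rh(\tilde a)$ computes $(\Id_{\Sch^\psi_n}\otimes\Schur_\rh)(a)$, and since $a\mapsto\tilde a$ is an isometry, the vector-valued norm is bounded by the scalar one. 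This argument uses nothing about Fourier multipliers; it is the structure of the upper-triangular pattern that makes the embedding possible.
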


\begin{proof}
  Let $a\in\Sch^\psi_m(\Sch^\psi_n)$; \ $a$~may be considered as an~$m\times
  m$ matrix~$(a_{ij})$ whose entries~$a_{ij}$ are $n\times n$
  matrices, and may be identified with the block matrix
  \begin{equation*}
    \tilde{a}=
    \begin{pmatrix}
      0&a_{11}&0&a_{12}&\cdots\\
      0&0&0&0&\cdots\\
      0&a_{21}&0&a_{22}&\cdots\\
      0&0&0&0&\cdots\\
      \vdots&\vdots&\vdots&\vdots&\ddots
    \end{pmatrix}.
  \end{equation*}
In this identification, \ $\Id_{\Sch^\psi_n}\otimes\Schur_\rh(a)$ is~$\Schur_\rh(\tilde{a})$.
\end{proof}

The Hilbert transform~$\Ht$ is the Schur multiplier obtained by
choosing $z=-1$ and $w=1$. The upper triangular operators in~$\Sch^p$ can be
seen as a noncommutative $\Hardy^p$ space, and $\Ht$ corresponds
exactly to the Hilbert transform in this setting (see
\cite{Ran98,MaWe98}).  Using classical results on $\Hardy^p$ spaces,
all Hilbert transforms are c.b.\ for~$1<p<\infty$
(see \cite{Zs80,Ran98,MaWe98}).

On the circle~$\T$, the classical Hilbert transform~$H$ corresponds to
the Fourier multiplier given by the sign function (with the
convention~$\sgn(0)=1$), and its norm on~$\Ell^p$ is
\begin{math}
  \cot \left(\pi/2\max(p,p')\right)=\csc(\pi/p)+\cot(\pi/p)
\end{math}
for~$1<p<\infty$. The story of the computation of this norm starts
with a paper by Israel Gohberg and Naum Krupnik (\cite{Gk68}) for $p$
a power of~2.  The remaining cases were handled by Stylianos
Pichorides (\cite{Pi72}) and Brian Cole (see \cite{Gam78})
independently.  The best results in this subject are those of Brian
Hollenbeck, Nigel Kalton, and Igor Verbitsky (\cite{HoKaVe03}), but
they rely on complex variable methods that are not available in the
operator-valued case.  When $p$~is a power of~2 (or its conjugate), a
combination of arguments of Gohberg and Krupnik (\cite{GoKr92}) with
some of L\'aszl\'o Zsid\'o (\cite{Zs80}) yields the following result.
\begin{thm}\label{normh}
  Let\/ $p\in\io[1,\infty]$. The norm and the complete norm of the
  Hilbert transform\/~$\Ht$ on\/~$\Sch^p$ coincide with the complete norm
  of the Hilbert transform\/~$H$ on\/~$\Ell^p$: if\/ $\Hsgn(i,j)=\sgn(i-j)$
  for\/~$i,j\ge1$,
  \begin{equation*}
    \|\Hsgn\|_{\Schur(\Sch^p)}=\|\Hsgn\|_{\Schur_\cb(\Sch^p)}=\|\sgn\|_{\Fourier_\cb(\Ell^p)}.
  \end{equation*}
  If\/ $p$~is a power of~2, then these norms coincide with the norm
  of\/~$H$ on\/~$\Ell^p$:
  \begin{equation*}
    \|\Hsgn\|_{\Schur(\Sch^p)}=\|\Hsgn\|_{\Schur_\cb(\Sch^p)}=\|\sgn\|_{\Fourier_\cb(\Ell^p)}=\|\sgn\|_{\Fourier(\Ell^p)}=\cot({\pi}/{2p}).
  \end{equation*}
\end{thm}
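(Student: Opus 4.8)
The plan is to split the claimed equalities into three links and to treat the power-of-$2$ case by an induction. The middle link, $\|\Hsgn\|_{\Schur_\cb(\Sch^p)}=\|\sgn\|_{\Fourier_\cb(\Ell^p)}$, is Theorem~\ref{trans2}$\,(b)$ applied to the amenable group $\Ga=\Z$ with $\La=\Z$ and $\ph=\sgn$: the associated Toeplitz set is $\Z\times\Z$ and the Toeplitz symbol is $(r,c)\mapsto\sgn(r-c)$. Since any finite rectangle in $\Z\times\Z$ can be translated diagonally, hence without altering $r-c$, into $\N\times\N$, and since $\N\times\N$ is a product set in $\Z\times\Z$, the Schur norm and the complete Schur norm of $\sgn(r-c)$ over $\Z\times\Z$ agree with those over $\N\times\N$, so the transfer indeed concerns $\Hsgn$ as defined in the statement. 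The first link, $\|\Hsgn\|_{\Schur(\Sch^p)}=\|\Hsgn\|_{\Schur_\cb(\Sch^p)}$, is Proposition~\ref{rhcb}: one has $\Hsgn=\Id-2S$, where $S$ is the strictly-upper-triangular projection of $\N\times\N$, so $\Hsgn$ is a linear combination of the identity and a triangular projection. This settles the first displayed formula for every $p\in\io[1,\infty]$.

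For $p=2^k$ ($k\ge1$) it then remains to evaluate $c_p:=\|\Hsgn\|_{\Schur(\Sch^p)}$. The lower bound $c_p\ge\|\sgn\|_{\Fourier(\Ell^p)}=\cot(\pi/2p)$ is the classical computation of Gohberg and Krupnik \cite{Gk68} (also Pichorides), available because $p\ge2$. For the matching upper bound I would run the Gohberg--Krupnik ``doubling'' induction, in the form of \cite{GoKr92}, inside the noncommutative $\Hardy^p$ framework of Zsid\'o \cite{Zs80}, where the boundary values of analytic functions are replaced by upper triangular operators. The base case is $c_2=1$, since on the Hilbert spaces $\Sch^2$ and $\Sch^2(\Sch^2)$ the multiplier $\Hsgn$ has unimodular symbol, hence is an isometry. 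The inductive step is the estimate
\begin{equation*}
  c_{2p}\le c_p+\sqrt{c_p^2+1}\,,
\end{equation*}
and, starting from $c_2=1=\cot(\pi/4)$ and using $\cot(\theta/2)=\cot\theta+\csc\theta$, it gives $c_{2^k}=\cot(\pi/2^{k+1})=\cot(\pi/2p)$; this forces equality in the lower bound, and simultaneously pins $\|\sgn\|_{\Fourier(\Ell^p)}$, squeezed between $\cot(\pi/2p)$ and $c_p$, to the same value.

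To prove the recursion I would follow the classical conjugate-function argument. After reducing, by rotation and averaging arguments as in the scalar theory and by disposing of the diagonal part on which $\Hsgn$ acts as the identity, to the action on a selfadjoint element $a\in\Sch^{2p}$ with zero diagonal, write $a=a_++a_-$ with $a_\pm$ strictly triangular and $a_-=a_+^*$, and set $b:=\iu\,\Hsgn(a)$. Then $b$ is selfadjoint and $a+\iu b=2a_+$ is strictly upper triangular, so $(a+\iu b)^2$ is strictly upper triangular; separating upper and lower triangular parts gives the noncommutative Cotlar-type identity
\begin{equation*}
  b^2=a^2+\iu\,\Hsgn(ab+ba).
\end{equation*}
Taking $\Sch^{2p}$-norms and using $\|b^2\|_p=\|b\|_{2p}^2$, the bound $\|\Hsgn(ab+ba)\|_p\le c_p\|ab+ba\|_p$ and H\"older's inequality $\|ab+ba\|_p\le2\|a\|_{2p}\|b\|_{2p}$, one gets
\begin{equation*}
  \|\Hsgn(a)\|_{2p}^2=\|b\|_{2p}^2\le\|a\|_{2p}^2+2c_p\,\|a\|_{2p}\,\|\Hsgn(a)\|_{2p},
\end{equation*}
and solving this quadratic inequality in $\|\Hsgn(a)\|_{2p}$ yields the recursion; the identical computation with operator-valued entries gives the complete-norm version. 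Zsid\'o's analysis of $\Hardy^p$ over a von Neumann algebra is what legitimises the manipulation of ``analytic'' (triangular) elements in $\Sch^p$ together with the reductions above.

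The hard part is exactly this last point: one must carry out the passage to selfadjoint, diagonal-free elements and the analyticity identity \emph{without losing any constant}, since the doubling scheme is sharp and any slack destroys the value $\cot(\pi/2p)$. The transfer theorem and Proposition~\ref{rhcb} have already done the conceptual work of moving the entire question onto the Schatten side, leaving only this classical-looking but delicate induction.
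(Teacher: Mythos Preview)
Your approach is essentially the paper's: transfer plus Proposition~\ref{rhcb} for the first line, then Cotlar's identity and the doubling recursion $c_{2p}\le c_p+\sqrt{c_p^2+1}$ for the second. The identity you write, $b^2=a^2+i\,\Hsgn(ab+ba)$, is exactly the paper's formula $(\tilde Hf)^2+f^2=\tilde H\bigl((\tilde Hf)f+f(\tilde Hf)\bigr)$ rewritten on the Schatten side with $b=i\,\Hsgn(a)$.

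The one point where you diverge, and where your write-up is genuinely incomplete, is the reduction to selfadjoint diagonal-free elements, which you describe as ``rotation and averaging arguments as in the scalar theory.'' The scalar devices do not port over without loss of constant, and you yourself flag this as the hard part. The paper does not work on the Schatten side here; it computes the \emph{complete} norm of~$H$ on~$\Ell^p$ (equal to $c_p$ by the first displayed line) and performs two explicit isometric moves that commute with~$\tilde H$: first the dilation $Df(z)=zf(z^2)$, which embeds $\Ell^p$ into $\Ell^p_\La$ with $\La=2\Z+1$, so that $\sgn$ is genuinely odd on the spectrum; then Paulsen's off-diagonal trick $f\mapsto\bigl(\begin{smallmatrix}0&f\\f^*&0\end{smallmatrix}\bigr)$, which reduces to selfadjoint-valued $f$ at the cost of the \emph{same} factor $2^{1/p}$ in both $\|f\|$ and $\|\tilde Hf\|$ (oddness from step one is what guarantees $\tilde H(f^*)=-(\tilde Hf)^*$). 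Since you already have the first displayed equality, you can simply switch to the Fourier side and invoke these two tricks verbatim; on the Schatten side the analogous reductions can be built, but they are less transparent than your phrase ``rotation and averaging'' suggests.
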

\begin{proof}
  Let $p\ge 2$. The norm of~$H$ on~$\Ell^p$ is~$\cot({\pi}/{2p})$ and
  the three other norms are equal by the transfer theorem~\ref{trans2}
  and the above proposition. We only need to compute the complete norm
  of~$H$. Let $\tilde H=\Id_{\Sch^p}\otimes H$~be the Hilbert
  transform on~$\Ell^p(\tr\otimes\tau)$. We shall use Mischa Cotlar's
  trick to go from~$\Ell^p$ to~$\Ell^{2p}$: the equality
  \begin{math}
    (\sgn i\sgn j)+1=\sgn(i+j)\*(\sgn i+\sgn j)
  \end{math}
  shows that
\begin{equation}\label{cotlar}
  (\tilde Hf)(\tilde Hg)+fg=\tilde H\bigl((\tilde Hf)g+f(\tilde Hg)\bigr).
\end{equation}
\noindent \emph{Step 1.} The function~$\sgn$ is not odd, because of
its value in~0; this can be fixed in the following way. Let~$\La=2\Z
+1$. The norm of~$\tilde H$ on~$\Ell^p(\tr\otimes\tau)$ is equal to
its norm on~$\Ell^p_\La(\tr\otimes\tau)$. In fact, let
$D$~be defined by~$Df(z)=zf(z^2)$; \ $D$~is a complete isometry
on~$\Ell^p$ with range~$\Ell^p_\La$ that commutes with~$H$.

\noindent \emph{Step 2.} Let $S$~be the real subspace
of~$\Ell_\La^p(\tr\otimes\tau)$ consisting of functions with values
in~$\Sch^p$ so that $f(z)$~is selfadjoint for almost all~$z\in \T$.
Let us apply Vern Paulsen's off-diagonal trick (\cite[Lemma~8.1]{pa02})
to show that the norm of~$\tilde H$ on~$\Ell^p$ is equal to its norm
on~$S$.  Let~$f\in\Ell^p_\La(\tr\otimes\tau)$. Identifying
$\Sch^p_2(\Sch^p_{\vphantom{2}})$ with~$\Sch^p$,
\begin{equation*}
  g(z)= 
  \begin{pmatrix}
    0&f(z)\\
    f(z)^* &0
  \end{pmatrix}
\end{equation*}
defines an element of~$S$.  As the adjoint operation is isometric on~$\Sch^p$,
\begin{equation*}
\|g\|_{S}= 2^{1/p}\|f\|_{\Ell^p(\tr\otimes\tau)}.
\end{equation*}
Let us now consider
\begin{equation*}
  \tilde Hg
  =
  \begin{pmatrix}
    0&\tilde Hf\\
    \tilde H(f^*)&0
  \end{pmatrix}.
\end{equation*}
As $0\notin\La$ by Step~1, the equality~$\sgn(-i)=-\sgn i$ holds
for~$i\in\La$: this yields that $\tilde H(f^*)=-(\tilde
Hf)^*$. Therefore
\begin{equation*}
  \|\tilde Hg\|_S=2^{1/p}\|\tilde Hf\|_{\Ell^p(\tr\otimes\tau)}.
\end{equation*}
 
\noindent \emph{Step 3.} Let $u_p$~be the norm of~$\tilde H$
on~$\Ell^p(\tr\otimes\tau)$; then $u_{2p}\le u_p+\sqrt{1+u_p}$. It
suffices to prove this estimate for~$f\in S$, and by approximation we
may suppose that~$f$ is a finite linear combination of terms~$a_i\tens
z^i+a_i^*\tens z^{-i}$ with~$a_i$ finite matrices. Note that $\tilde
Hf=-(\tilde Hf)^*$. Formula~\eqref{cotlar} with~$f=g$ combined with
H\"older's inequality yields
\begin{equation*}
  \|(\tilde Hf)^2\|_{\Ell^p(\tr\otimes\tau)}
  \le\|f^2\|_{\Ell^p(\tr\otimes\tau)}
  +2u_p\|f\|_{\Ell^{2p}(\tr\otimes\tau)}\|\tilde Hf\|_{\Ell^{2p}(\tr\otimes\tau)}.
\end{equation*}

Since $f$~and~$\tilde Hf$ take normal values,
\begin{gather*}
  \|f^2\|_{\Ell^p(\tr\otimes\tau)}=\|f\|_{\Ell^{2p}(\tr\otimes\tau)}^2\\
  \|(\tilde Hf)^2\|_{\Ell^p(\tr\otimes\tau)}=\|\tilde
  Hf\|_{\Ell^{2p}(\tr\otimes\tau)}^2.
\end{gather*}
Therefore, if
$\|f\|_{\Ell^{2p}(\tr\otimes\tau)}=1$, \ $\|\tilde
Hf\|_{\Ell^{2p}(\tr\otimes\tau)}$ must be smaller than the bigger root of~$t^2-2u_pt-1$; that is,
\begin{equation*}
  \|\tilde Hf\|_{\Ell^{2p}(\tr\otimes\tau)}^2\le u_p+\sqrt{u_p^2+1}\text{ and }u_{2p}\le u_p+\sqrt{u_p^2+1}.
\end{equation*}

\noindent \emph{Step 4.} The multiplier~$H$ is an isometry on~$\Ell^2(\tr\otimes\tau)$, so that $u_2=1=\cot(\pi/4)$. As
$\cot(\th/2)=\cot\th+\sqrt{\cot^2\th+1}$ for~$\th\in\io[0,\pi]$, we
conclude by induction.
\end{proof}

Unfortunately, we cannot deal with other values of~$p>2$ by this method.

The Riesz projection~$\Rt$ is the Schur multiplier obtained by
choosing $z=0$ and $w=1$ in Proposition~\ref{rhcb}. It is the
projection on the upper triangular part. On the circle, the classical
Riesz projection~$T$, that is the projection onto the analytic part,
corresponds to the Fourier multiplier given by the indicator
function~$\un_{\Z^+}$ of nonnegative integers; its norm
on~$\Ell^p$, as computed by Hollenbeck and Verbitsky
(\cite{HoVe00}), is
\begin{math} \csc(\pi/p)\end{math}. As for the Hilbert transform, we
know that the norm and the complete norm of~$\Rt$ on~$\Sch^p$ are
equal and coincide with the complete norm of~$T$ on~$\Ell^p$, but,
to the best of our knowledge, there is no simple formula
like~\eqref{cotlar} to go from exponent~$p$ to~$2p$. We only obtained
the following computation.

\begin{prp}\label{normt}
  Let\/ $p\in\io[1,\infty]$. The norm and the complete norm of the
  Riesz projection\/~$\Rt$ on\/~$\Sch^p$ coincide with the complete
  norm of the Riesz projection\/~$T$ on\/~$\Ell^p$: if\/
  $\Hun(i,j)=\un_{\Z^+}(i-j)$ for\/~$i,j\ge1$,
  \begin{equation*}
    \|\Hun\|_{\Schur(\Sch^p)}=\|\Hun\|_{\Schur_\cb(\Sch^p)}=\|\un_{\Z^+}\|_{\Fourier_\cb(\Ell^p)}.
  \end{equation*}
  If\/~$p=4$, then these norms coincide with the norm of\/~$T$ on\/~$\Ell^p$:
  \begin{equation*}
    \|\Hun\|_{\Schur(\Sch^4)}=\|\Hun\|_{\Schur_\cb(\Sch^4)}=\|\un_{\Z^+}\|_{\Fourier_\cb(\Ell^4)}=\|\un_{\Z^+}\|_{\Fourier(\Ell^4)}=\sqrt2.
  \end{equation*}
\end{prp}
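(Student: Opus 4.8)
The plan is to treat first the chain of equalities valid for every $p$, then the value at $p=4$; only the latter requires real work. For the chain: after interchanging rows and columns --- transposition is isometric on $\Sch^p$ and on $\Sch^p(\Sch^p)$, hence preserves both the norm and the complete norm of a Schur multiplier --- the Toeplitz matrix $\Hun$ becomes the projection onto the upper triangular (including diagonal) part, that is, the multiplier of Proposition~\ref{rhcb} with $z=1$ and $w=0$; that proposition gives $\|\Hun\|_{\Schur(\Sch^p)}=\|\Hun\|_{\Schur_\cb(\Sch^p)}$. Next, Theorem~\ref{trans2}$\,(b)$ applied with the amenable group $\Ga=\Z$ and $\La=\Z$ --- so that $\HLa=\Z\times\Z$ and the associated Toeplitz matrix on $\Sch^p_{\Z\times\Z}=\Sch^p$ is exactly $\Hun$ --- identifies this common value with $\|\un_{\Z^+}\|_{\Fourier_\cb(\Ell^p)}$. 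This proves the first displayed identity for every $p\in\io[1,\infty]$.

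For the value at $p=4$: since the complete norm dominates the norm, $\|\un_{\Z^+}\|_{\Fourier_\cb(\Ell^4)}\ge\|\un_{\Z^+}\|_{\Fourier(\Ell^4)}=\csc(\pi/4)=\sqrt2$, the last equality being the Hollenbeck--Verbitsky computation \cite{HoVe00}. It remains to prove the reverse inequality, namely that the complete norm of $T$ on $\Ell^4$ --- equivalently, by the first step, the norm of the triangular truncation on $\Sch^4$ --- is at most $\sqrt2$. Here I would imitate the proof of Theorem~\ref{normh}. The complete isometry $Df(z)=zf(z^2)$ commutes with $T$ and maps onto $\Ell^4_{2\Z+1}$, on which $\un_{\Z^+}$ restricts to an odd sequence, so the estimate is reduced to mean-zero $f\in\Ell^4(\tr\otimes\tau)$; for such $f$, writing $H:=2T-\Id$ for the Hilbert transform of Theorem~\ref{normh}, we have $Tf=\tfrac12(f+Hf)$. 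Assuming in addition that $Hf$ takes anti-selfadjoint values, so that $(Tf)^*=\tfrac12(f-Hf)$, one gets $4(Tf)^*(Tf)=(f-Hf)(f+Hf)=(f^2+|Hf|^2)+[f,Hf]$, where $[f,Hf]=fHf-(Hf)f$ is selfadjoint; cyclicity of $\tr\otimes\tau$ makes $(\tr\otimes\tau)\bigl((f^2+|Hf|^2)\,[f,Hf]\bigr)$ vanish, whence
\begin{equation*}
  16\,\|Tf\|_4^4=\|f^2+|Hf|^2\|_2^2+\|[f,Hf]\|_2^2
\end{equation*}
with the norms on the right taken in $\Ell^2(\tr\otimes\tau)$.

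The proof then reduces to the sharp estimate $\|f^2+|Hf|^2\|_2^2+\|[f,Hf]\|_2^2\le 64\,\|f^2\|_2^2$ (note $\|f^2\|_2=\|f\|_4^2$ when $f$ is selfadjoint), and this last inequality is where the real difficulty lies: estimating the two summands separately by Hölder's inequality together with $\|H\|_{\Ell^4}=\cot(\pi/8)=1+\sqrt2$ (Theorem~\ref{normh}) only gives the constant $36+24\sqrt2$, which is larger than $64$, so one must exploit the cross-term cancellation produced by Cotlar's identity \eqref{cotlar} linking $f$, $Hf$ and $Tf$ rather than treating $f$ and $Hf$ as independent --- the final value $\sqrt2$ being then dictated by $\csc(\pi/4)=\sqrt2$. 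A subsidiary point to be settled first is the reduction to selfadjoint-valued $f$: unlike $H$, which anticommutes with the adjoint on mean-zero elements (so that Vern Paulsen's off-diagonal trick \cite[Lemma~8.1]{pa02} applies, exactly as in Theorem~\ref{normh}), the Riesz projection only satisfies $T(f^*)=\bigl((\Id-T)f\bigr)^*$; one therefore either carries the computation through for general $f$, decomposing it into its selfadjoint and skew parts and tracking the extra terms, or finds another device. This is also why there is no exponent-doubling step here, so the method does not reach $p=2^k$ the way it does for the Hilbert transform.
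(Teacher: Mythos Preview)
Your treatment of the chain of equalities for general $p$ is correct and is exactly what the paper does (implicitly): Proposition~\ref{rhcb} gives $\|\Hun\|_{\Schur(\Sch^p)}=\|\Hun\|_{\Schur_\cb(\Sch^p)}$, and Theorem~\ref{trans2}$\,(b)$ identifies this with $\|\un_{\Z^+}\|_{\Fourier_\cb(\Ell^p)}$. The lower bound $\sqrt2$ at $p=4$ via Hollenbeck--Verbitsky is also fine.

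The upper bound at $p=4$, however, is a genuine gap --- and you say so yourself. Your identity $16\|Tf\|_4^4=\|f^2+|Hf|^2\|_2^2+\|[f,Hf]\|_2^2$ is correct (cyclicity does kill the cross term), but you never establish the estimate $\le64\|f\|_4^4$; you only observe that H\"older plus $\|H\|_{\Ell^4}=1+\sqrt2$ is too crude, and you leave the Cotlar-type cancellation as a hope rather than an argument. You also correctly flag that Paulsen's off-diagonal trick does not reduce to selfadjoint $f$ for $T$ the way it does for $H$. So two essential steps are missing.

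The paper bypasses both difficulties by a short, purely matricial argument that never invokes $H$ or Cotlar at all. One works directly in $\Sch^4$: write an arbitrary finite matrix as $x+y$ with $x$ upper triangular and $y$ strictly lower triangular, so that $\Rt(x+y)=x$. The key algebraic observation is that $yx^*$ is strictly lower triangular (strictly lower $\times$ lower) and $xy^*$ is strictly upper triangular (upper $\times$ strictly upper), whence
\[
\Rt(xx^*)=\Rt\bigl((x+y)x^*\bigr),\qquad (\Id-\Rt)(xx^*)=(\Id-\Rt)\bigl(x(x+y)^*\bigr).
\]
Since $\Rt$ is a contraction on $\Sch^2$, H\"older gives both $\|\Rt(xx^*)\|_{\Sch^2}$ and $\|(\Id-\Rt)(xx^*)\|_{\Sch^2}$ bounded by $\|x+y\|_{\Sch^4}\|x\|_{\Sch^4}$; orthogonality of the ranges of $\Rt$ and $\Id-\Rt$ in $\Sch^2$ then yields
\[
\|x\|_{\Sch^4}^4=\|xx^*\|_{\Sch^2}^2=\|\Rt(xx^*)\|_{\Sch^2}^2+\|(\Id-\Rt)(xx^*)\|_{\Sch^2}^2\le2\|x\|_{\Sch^4}^2\|x+y\|_{\Sch^4}^2,
\]
i.e.\ $\|x\|_{\Sch^4}\le\sqrt2\,\|x+y\|_{\Sch^4}$. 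This single paragraph replaces your entire second half: no reduction to selfadjoint values, no Hilbert transform, no Cotlar identity.
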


\begin{proof}
  We shall compute the norm of~$\Rt$ on~$\Sch^4$. Let $x$~be a finite
  upper triangular matrix and let $y$~be a finite strictly lower
  triangular matrix. We have to prove that
\begin{equation*}
  \sqrt 2 \|x+y\|_{\Sch^4}\ge  \|x\|_{\Sch^4}.
\end{equation*}
Let us make the obvious estimates on~$\Sch^2$ and use the fact that
the adjoint operation is isometric:
\begin{equation*}
  \|\Rt (xx^*)\|_{\Sch^2}
  =\|\Rt ((x+y)x^*)\|_{\Sch^2} 
  \le\|x+y\|_{\Sch^4}\|x\|_{\Sch^4},
\end{equation*}
and similarly, 
\begin{equation*}
  \| (\Id-\Rt) (xx^*)\|_{\Sch^2}=\| (\Id -\Rt)
  (x(x+y)^*)\|_{\Sch^2} \le \|x\|_{\Sch^4} \|x+y\|_{\Sch^4}.
\end{equation*}
As $\Rt$~and~$\Id-\Rt$ have orthogonal ranges,
\begin{equation*}
\|x\|_{\Sch^4}^4=\|xx^*\|_{\Sch^2}^2=\| (\Id-\Rt) (xx^*)\|_{\Sch^2}^2+
\| \Rt (xx^*)\|_{\Sch^2}^2\le 2 \|x\|_{\Sch^4}^2 \|x+y\|_{\Sch^4}^2.\qedhere
\end{equation*}
\end{proof}

\section{Unconditional approximating sequences}\label{sec:uap}

The following definition makes sense for general operator spaces, but
we choose to state it only in our specific context.

\begin{dfn}\label{uap:def}
  Let $\Ga$~be a discrete group and~$\La\subseteq\Ga$. Let $X$~be the
  reduced $\Cont^*$-algebra of~$\Ga$ or its noncommutative Lebesgue
  space~$\Ell^p$ for~$p\in\iod[1,\infty]$.
  \begin{enumerate}
  \item A sequence~$(T_k)$ of operators on~$X_\La$ is an
    \emph{approximating sequence} if each~$T_k$ has finite rank and~$T_kx\to x$ for every~$x\in X_\La$. It is a \emph{complete}
    approximating sequence if the~$T_k$ are uniformly c.b. If $X_\La$
    admits a complete approximating sequence, then $X_\La$ enjoys
    the \emph{c.b.\ approximation property}.
  \item The \emph{difference sequence}~$(\Delta T_k)$ of a sequence~$(T_k)$ is given by~$\Delta T_1=T_1$ and $\Delta
    T_k=T_k-T_{k-1}$ for~$k\ge2$. An approximating sequence~$(T_k)$ is \emph{unconditional} if the operators
    \begin{equation}
      \label{cuap:eq}
      \sum_{k=1}^n\eps_k\Delta T_k\quad\text{with~$n\ge1$ and~$\eps_k\in\{-1,1\}$}
    \end{equation}
    are uniformly bounded on~$X_\La$; then~$X_\La$ enjoys the
    \emph{unconditional approximation property}.
  \item An approximating sequence~$(T_k)$ is \emph{completely}
    unconditional if the operators in~\eqref{cuap:eq} are uniformly
    c.b.\ on~$X_\La$; then $X_\La$ enjoys the
    \emph{complete} unconditional approximation property. The minimal
    uniform bound of these operators is the \emph{complete
      unconditional constant} of~$X_\La$.
  \end{enumerate}
\end{dfn}

We may always suppose that a complete approximating sequence
on~$\Cont_\La$ is a Fourier multiplier sequence (see
\cite[Theorem~2.1]{hk94}). We may also do so on~$\Ell^p_\La$ if
$\Ell^\infty$ has the so-called QWEP (see \cite[Theorem~4.4]{jr03}).
More precisely, the following proposition holds.

\begin{prp}
  \label{fm}
  Let\/ $\Ga$ be a discrete group and\/ $\La\subseteq\Ga$. Let\/ $X$~either
  be its reduced\/ $\Cont^*$-algebra or its noncommutative Lebesgue
  space\/~$\Ell^p$, where\/ $p\in\iod[1,\infty]$ and\/ $\Ell^\infty$ has the
  QWEP\@. If\/ $X_\La$ enjoys the completely unconditional approximation
  property with constant\/~$D$, then for every\/~$D'>D$ there is a
  complete approximating sequence of Fourier multipliers\/~$(\ph_k)$
  that realises the completely unconditional approximation property
  with constant\/~$D'$: the Fourier
  multipliers\/~$\sum_{k=1}^n\eps_k\Delta\ph_k$ are uniformly completely
  bounded by\/~$D'$ on\/~$X_\La$.
\end{prp}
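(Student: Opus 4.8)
The plan is to rerun the averaging argument that underlies \cite[Theorem~2.1]{hk94} (for $X=\Cont$, valid for an arbitrary discrete group) and \cite[Theorem~4.4]{jr03} (for $X=\Ell^p$, valid under the stated QWEP hypothesis on $\vn$), while keeping track of the difference operators $\Delta T_k$. So I would fix a completely unconditional approximating sequence $(T_k)$ for $X_\La$ realising the constant~$D$, that is, $\bignorm{\sum_{k\le n}\eps_k\Delta T_k}_{\cb}\le D$ for the complete norm of operators on $X_\La$, for every~$n$ and all signs~$\eps_k$.

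First I would turn each~$T_k$ into a Fourier multiplier by the canonical \emph{diagonal map} $\Psi$: it sends a bounded operator~$T$ on~$X_\La$ to the Fourier multiplier $\Psi(T)=\Fourier_{\ph^T}$ whose symbol is the diagonal of~$T$ in the Fourier basis, $\ph^T_\ga=\tau\bigl(\la_\ga^*T(\la_\ga)\bigr)=\bigl(T(\la_\ga)\bigr)_\ga$. The substantial input, supplied by the cited results, is that $\Psi$ does not increase complete norms, $\bignorm{\Psi(T)}_{\cb}\le\bignorm{T}_{\cb}$; in the $\Ell^p$ case this is exactly what QWEP buys. Since $\Psi$ is visibly linear, it commutes with~$\Delta$ and with sign-sums, so the Fourier multipliers $\Fourier_{\ph'_k}$ with $\ph'_k=\ph^{T_k}$ satisfy
\begin{equation*}
  \Bignorm{\sum_{k\le n}\eps_k\Delta\Fourier_{\ph'_k}}_{\cb}
  =\Bignorm{\Psi\Bigl(\sum_{k\le n}\eps_k\Delta T_k\Bigr)}_{\cb}\le D .
\end{equation*}
Taking all $\eps_k=1$ gives in particular $\bignorm{\Fourier_{\ph'_k}}_{\cb}\le D$; moreover $\Psi(\Id)=\Id$, and $\ph'_k(\ga)=\bigl(T_k(\la_\ga)\bigr)_\ga\to1$ for each~$\ga$ because $T_k(\la_\ga)\to\la_\ga$ and the $\ga$th Fourier coefficient is a contractive functional. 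As the $\la_\ga$ with $\ga\in\La$ span a dense subspace of $X_\La$, this forces $\Fourier_{\ph'_k}(x)\to x$ for every $x\in X_\La$.

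The only remaining point — and the only place the slack $D'>D$ is spent — is that $\Fourier_{\ph'_k}$ need not have finite rank. Here one uses the finite rank of~$T_k$: the diagonal of a finite-rank completely bounded operator lies in the Fourier algebra $\mathrm A(\Ga)$ (respectively in the predual relevant to the $\Ell^p$ case), and truncating it to a large enough finite set $F_k\subseteq\La$ produces a finitely supported symbol $\ph_k$ with $\bignorm{\Fourier_{\ph'_k}-\Fourier_{\ph_k}}_{\cb}<\delta_k$, where the $\delta_k>0$ are chosen so that $2\sum_k\delta_k<D'-D$ (set $\delta_0=0$). Then each $\Fourier_{\ph_k}$ has finite rank, $\ph_k(\ga)\to1$ for each~$\ga$ so that $\Fourier_{\ph_k}(x)\to x$ for every~$x$, and the telescoping estimate $\bignorm{\Delta\Fourier_{\ph_k}-\Delta\Fourier_{\ph'_k}}_{\cb}\le\delta_k+\delta_{k-1}$ yields
\begin{equation*}
  \Bignorm{\sum_{k\le n}\eps_k\Delta\Fourier_{\ph_k}}_{\cb}\le D+2\sum_k\delta_k<D' .
\end{equation*}
Hence $(\ph_k)$ is the desired complete approximating sequence of Fourier multipliers realising the completely unconditional approximation property of $X_\La$ with constant~$D'$. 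The main obstacle is not this bookkeeping but the construction of~$\Psi$ with the two properties used above — complete contractivity, and finite rank mapped into $\mathrm A(\Ga)$ — which in the $\Ell^p$ setting genuinely needs the QWEP hypothesis; that is why it is imposed.
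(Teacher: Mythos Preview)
The paper gives no proof of this proposition: it merely records, in the sentence preceding the statement, that the result is obtained by the averaging arguments of \cite[Theorem~2.1]{hk94} (for~$\Cont$) and \cite[Theorem~4.4]{jr03} (for~$\Ell^p$ under QWEP), and then states the proposition as the ``more precise'' outcome. Your proposal is exactly that intended argument, spelled out: apply the diagonal map~$\Psi$ from those references, use its linearity to transport the uniform c.b.\ bound on the sign-sums $\sum\eps_k\Delta T_k$ to the sign-sums $\sum\eps_k\Delta\Psi(T_k)$, and then truncate to finitely supported symbols at the cost of~$D'-D$. So your approach and the paper's coincide.

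One point worth tightening: the assertion that the diagonal symbol~$\ph^{T_k}$ of a finite-rank c.b.\ map lies in~$\mathrm A(\Ga)$ (hence can be approximated in c.b.\ multiplier norm by finitely supported symbols) is precisely the nontrivial lemma inside \cite{hk94}, and its $\Ell^p$ analogue is what \cite{jr03} supplies under QWEP; you are right to treat this as borrowed input rather than reprove it, but the phrase ``the predual relevant to the $\Ell^p$ case'' is vague and should simply point to \cite[Theorem~4.4]{jr03}. With that understood, the bookkeeping (pointwise convergence $\ph'_k(\ga)\to1$, the telescoping estimate, the choice of~$\delta_k$) is correct.
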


Let us now describe how to skip blocks in an approximating sequence in
order to construct an operator that acts like the Riesz projection on
the sumset of two infinite sets. The following trick will be used in
the induction below (compare with the proof of \cite[Theorem~4.2]{nptv}):
\begin{equation*}
  \begin{pmatrix}
    1&1\hskip\arraycolsep\vrule\hskip\arraycolsep0\\0&1\hskip\arraycolsep\vrule\hskip\arraycolsep0\\\noalign{\hrule}0&0\hskip\arraycolsep\vrule\hskip\arraycolsep0
  \end{pmatrix}
  -
  \begin{pmatrix}
    1&1\hskip\arraycolsep\vrule\hskip\arraycolsep0\\1&1\hskip\arraycolsep\vrule\hskip\arraycolsep0\\1&1\hskip\arraycolsep\vrule\hskip\arraycolsep0
  \end{pmatrix}
  +
  \begin{pmatrix}
    1&1&1\\1&1&1\\1&1&1
  \end{pmatrix}
  =
  \begin{pmatrix}
    1&1&1\\0&1&1\\0&0&1
  \end{pmatrix}.
\end{equation*}
\begin{lem}
  \label{diag}
  Let\/ $\Ga$~be a discrete group and\/~$\La\subseteq\Ga$.  Suppose that\/
  $\La$ contains the sumset\/~$\Row\Col$ of two infinite sets\/
  $\Row$~and\/~$\Col$. Let\/ $(T_k)$~be either an approximating sequence
  on\/~$\Ell^p_\La$ with\/ $p\in\iod[1,\infty]$, or an approximating
  sequence of Fourier multipliers on\/~$\Cont_\La$. Let\/~$\eps>0$. There
  is a sequence\/~$(\row_i)$ in\/~$\Row$, a sequence\/~$(\col_i)$ in\/~$\Col$,
  and there are indices\/~$l_1<k_2<l_2<k_3<\dots$ such that, for
  every\/~$n$, the \emph{skipped block sum}
  \begin{equation}
    \label{sbs}
    U_n=T_{l_1}+(T_{l_2}-T_{k_2})+\dots+(T_{l_n}-T_{k_n})
  \end{equation}
  acts, up to\/~$\eps$, as the Riesz projection on the sumset\/~$\{\row_i\col_j\}_{i,j\le n}$:
  \begin{equation*}
    \begin{cases}
      \norm{U_n(\la_{\row_i\col_j})-\la_{\row_i\col_j}}<\eps&\text{if\/ $i\le j\le n$},\\
      \norm{U_n(\la_{\row_i\col_j})}<\eps&\text{if\/ $j<i\le n$}.
    \end{cases}
  \end{equation*}
\end{lem}
\begin{proof}
  Let us construct the sequences and indices by induction. If~$n=1$,
  let $\row_1$~and~$\col_1$ be arbitrary; there is~$l_1$ such that
  $\norm{T_{l_1}(\la_{\row_1\col_1})-\la_{\row_1\col_1}}<\eps$.
  Suppose that $\row_1,\dots,\row_n$, $\col_1,\dots,\col_n$,
  $l_1,\dots,l_n$, and $k_2,\dots,k_n$ have been
  constructed. Let~$\delta>0$ be chosen later.
  \begin{itemize}
  \item The operator~$U_n$ defined by Equation~\eqref{sbs} has finite
    rank. If it is a Fourier multiplier, one can choose an
    element~$\row_{n+1}\in \Row$ such that
    $U_n(\la_{\row_{n+1}\col_j})=0$ for~$j\le n$. If it acts
    on~$\Ell^p_\La$ with $p\in\iod[1,\infty]$, one can choose an
    element~$\row_{n+1}\in \Row$ such that
    $\norm{U_n(\la_{\row_{n+1}\col_j})}<\delta$ for~$j\le n$ because
    $(\la_\ga)_{\ga\in\Ga}$ is weakly null in $\Ell^p$.
  \item There is~$k_{n+1}>l_n$ such that
    $\norm{T_{k_{n+1}}(\la_\ga)-\la_\ga}<\delta$ for~$\ga\in\{\row_i\col_j:1\le i\le n+1,1\le j\le n\}$.
  \item Again, choose~$\col_{n+1}\in \Col$ such that $\norm{(U_n-T_{k_{n+1}})(\la_{\row_i\col_{n+1}})}<\delta$ for~$i\le n+1$.
  \item Again, choose~$l_{n+1}>k_{n+1}$ such that $\norm{T_{l_{n+1}}(\la_\ga)-\la_\ga}<\delta$ for~$\ga\in\{\row_i\col_j:1\le i,j\le n+1\}$.
  \end{itemize}
  Let~$U_{n+1}=U_n+(T_{l_{n+1}}-T_{k_{n+1}})$. If $i\le n+1$ and~$j\le n$, then
  \begin{equation*}
    \norm{\Delta U_{n+1}(\la_{r_ic_j})}\le\norm{T_{l_{n+1}}(\la_{r_ic_j})-\la_{r_ic_j}}+\norm{\la_{r_ic_j}-T_{k_{n+1}}(\la_{r_ic_j})}<2\delta,
  \end{equation*}
  so that
  \begin{align*}
    \norm{U_{n+1}(\la_{r_ic_j})-\la_{r_ic_j}}<\eps+2\delta&\quad\text{if $i\le j\le n$}\\
    \norm{U_{n+1}(\la_{r_ic_j})}<\eps+2\delta&\quad\text{if $j<i\le n$}\\
    \norm{U_{n+1}(\la_{r_{n+1}c_j})}<3\delta&\quad\text{if $j\le n$.}
  \end{align*}
If $i\le n+1$, then
\begin{multline*}
  \norm{U_{n+1}(\la_{r_ic_{n+1}})-\la_{r_ic_{n+1}}}
  \\\le\norm{(U_n-T_{k_{n+1}})(\la_{r_ic_{n+1}})}+\norm{T_{l_{n+1}}(\la_{r_ic_{n+1}})-\la_{r_ic_{n+1}}}
  <2\delta.
\end{multline*}
This shows that our choice of~$\row_{n+1},\col_{n+1},k_{n+1}$ and~$l_{n+1}$ is adequate if
$\delta$~is small enough.
\end{proof}

This construction will provide an obstacle to the unconditionality of sumsets.

\begin{thm}
  \label{T}
  Let\/ $\Ga$~be a discrete group and\/~$\La\subseteq\Ga$.  Suppose that\/ $\La$
  contains the sumset\/~$\Row\Col$ of two infinite sets\/ $\Row$~and\/~$\Col$. 
  \begin{enumerate}
  \item Let\/ $1<p<\infty$. The complete unconditional constant of any approximating
    sequence for\/~$\Ell^p$ is bounded below by the norm of the
    Riesz projection on\/~$\Sch^p$, and thus by\/~$\csc\pi/p$.
  \item The spaces\/ $\Ell^1_\La$~and\/~$\Cont_\La$ do not enjoy the complete
    unconditional approximation property. 
  \item If\/ $\Ga$~is amenable, then the space\/~$\Cont_\La$ does not enjoy the unconditional
    approximation property.
  \end{enumerate}
\end{thm}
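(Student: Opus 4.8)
The plan is to extract from any approximating sequence a family of operators that, on a large grid inside~$\La$, behaves like the Riesz projection (upper‑triangular truncation), while still being controlled by the (complete) unconditional constant. The machinery is already in place: Lemma~\ref{diag} manufactures skipped block sums $U_n=T_{l_1}+\sum_{m=2}^n(T_{l_m}-T_{k_m})$ that act, up to a prescribed~$\eps$, as the Riesz projection on a grid $\{\row_i\col_j\}_{i,j\le n}$, and the unitary conjugation of Lemma~\ref{trans1} converts this action into the finite Riesz projection $\Rt_n$ on~$\Sch^p_n$. Since $\|\Rt_n\|$ increases to the norm of the triangular truncation $\Rt$ on~$\Sch^p$ — at least $\csc(\pi/p)$ by Proposition~\ref{normt} together with the Hollenbeck--Verbitsky value $\|\un_{\Z^+}\|_{\Fourier(\Ell^p)}=\csc(\pi/p)$ when $1<p<\infty$, and $+\infty$ when $p\in\{1,\infty\}$ — this yields $(a)$, $(b)$ and $(c)$ in one sweep.

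Here is the order I would follow. First, starting from the infinite sets $\Row$ and $\Col$, I would pass by the zig‑zag selection of Proposition~\ref{har:lp} to infinite subsets whose products $\row\col$ are pairwise distinct, and run the construction of Lemma~\ref{diag} inside them, so that the resulting sequences $(\row_i)$, $(\col_j)$ still have pairwise distinct products. Second, I would observe that $U_n=\sum_{k\le l_n}\eps_k\Delta T_k$ with $\eps_k\in\{0,1\}$ (coefficient~$1$ on kept blocks, $0$ on skipped ones), and split it as $U_n=\tfrac12(T_{l_n}+W_n)$, where $T_{l_n}=\sum_{k\le l_n}\Delta T_k$ and $W_n$ (coefficients $+1$ on kept blocks, $-1$ on skipped ones) are both of the admissible form $\sum\eps_k\Delta T_k$ with $\eps_k=\pm1$; hence $\|U_n\|_\cb\le D$, the complete unconditional constant.

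The transfer is the core of the argument. For $\xi=(\xi_{ij})\in\Sch^p_n$ I would set $w_\xi=\sum_{i,j\le n}\xi_{ij}\,\e_{ij}\otimes\la_{\row_i\col_j}$ in the $\Ell^p_\La$‑valued matrix space $\Sch^p_n[\Ell^p_\La]$, and conjugate by the block‑diagonal unitaries $u=\sum_i\e_{ii}\otimes\la_{\row_i}$ and $v=\sum_j\e_{jj}\otimes\la_{\col_j}$ exactly as in the proof of Lemma~\ref{trans1}, obtaining $w_\xi=u(\xi\otimes 1)v$. Because the $\row_i\col_j$ are pairwise distinct this gives $\|w_\xi\|=\|\xi\|_{\Sch^p_n}$, and likewise $\|w_{\Rt_n\xi}\|=\|\Rt_n\xi\|_{\Sch^p_n}$, where $\Rt_n\xi$ is the upper‑triangular part of~$\xi$. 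Applying $\Id_{\Sch^p_n}\otimes U_n$ and using~\eqref{diag1} gives $(\Id\otimes U_n)(w_\xi)=w_{\Rt_n\xi}+R$ with $\|R\|\le\eps\sum_{i,j}|\xi_{ij}|\le\eps n^2\|\xi\|_{\Sch^p_n}$, so
\begin{equation*}
  \|\Rt_n\xi\|_{\Sch^p_n}\le\|\Id_{\Sch^p_n}\otimes U_n\|\,\|w_\xi\|+\|R\|\le(D+\eps n^2)\|\xi\|_{\Sch^p_n}.
\end{equation*}
Running Lemma~\ref{diag} afresh for each~$n$ with $\eps=\eps_n$ chosen so that $n^2\eps_n\to0$ then gives $\|\Rt_n\|\le D+o(1)$, and letting $n\to\infty$ yields $(a)$. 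The identical computation carried out in $\Sch^\infty_n$ (uniform norm) for $\Cont_\La$ and in $\Sch^1_n$ for $\Ell^1_\La$, combined with the classical unboundedness of the triangular truncation on $\Sch^\infty$ and on $\Sch^1$, yields $(b)$ — with the proviso that for $\Cont_\La$ one first replaces the approximating sequence by one of Fourier multipliers via Proposition~\ref{fm}, so that Lemma~\ref{diag} applies. Finally $(c)$ reduces to $(b)$: were $\Cont_\La$ to enjoy the unconditional approximation property, amenability of~$\Ga$ would let me average the approximating sequence over a F{\o}lner net into an unconditional approximating sequence of Fourier multipliers with no larger constant (cf.~\cite{hk94}), and then \cite[Corollary~1.8]{dh85}, which identifies the norm and the complete norm of a Fourier multiplier on $\Cont_\La$ for amenable~$\Ga$, would promote it to a \emph{completely} unconditional one, contradicting~$(b)$.

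The delicate points I anticipate are: arranging the whole induction of Lemma~\ref{diag} so that the products $\row_i\col_j$ remain pairwise distinct throughout (this underlies the isometry $\xi\mapsto w_\xi$); keeping the remainder $R$ uniformly small, handled by letting $\eps$ shrink with~$n$ rather than fixing it once and for all; and, for part~$(c)$, checking that the amenable averaging is genuinely norm‑nonincreasing while still landing in Fourier multipliers.
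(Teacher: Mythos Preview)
Your proposal is correct and follows essentially the same route as the paper's proof: Lemma~\ref{diag} produces a skipped block sum $U_n$ acting as the Riesz projection on a grid, $U_n$ is the mean of two admissible signed sums so that $\|U_n\|_\cb\le D$, and the unitary conjugation of Lemma~\ref{trans1} transfers this to the bound $\|\Rt_n\|_{\Sch^p_n}\le D+n^2\eps$; parts $(b)$ and $(c)$ are then handled just as you outline, via Proposition~\ref{fm} and the automatic complete boundedness of Fourier multipliers on $\Cont_\La$ in the amenable case. One inessential extra step in your plan is the preliminary extraction of pairwise distinct products $\row_i\col_j$: the isometry $\xi\mapsto w_\xi=u(\xi\otimes\la_\id)v$ holds by unitary conjugation regardless, so the paper simply omits it.
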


\begin{proof}
  Let $(T_k)$~be an approximating sequence on~$\Ell^p_\La$.
  By Lemma~\ref{diag}, for every~$\eps>0$ and every~$n$, there are
  elements~$r_1,\dots,r_n\in\Row$, $c_1,\dots,c_n\in\Col$ such that
  the Fourier multiplier~$\ph$ given by the indicator function
  of ${\{r_ic_j\}_{i\le j}}$ is near to a skipped block sum~$U_n$
  of~$(T_k)$ in the sense that
  $\|U_n(\la_{r_ic_j})-\ph_{r_ic_j}\la_{r_ic_j}\|<\eps$. But $U_n$~is
  the mean of two operators of the form~\eqref{cuap:eq}: its complete
  norm will provide a lower bound for the complete unconditional
  constant of~$X_\La$. Let us repeat the argument of
  Lemma~\ref{trans1} with~$x\in\Sch^p_n$. As
  \begin{multline*}
    \Bignorm{\sum_{i,j=1}^nx_{i,j}\e_{i,j}}_{\Sch^p_n}\\
    \begin{aligned}
      &=\Bignorm{\Bigl(\sum_{i=1}^n\e_{i,i}\otimes\la_{r_i}\Bigr)\Bigl(\sum_{i,j=1}^nx_{i,j}\e_{i,j}\otimes\la_\id\Bigr)\Bigl(\sum_{j=1}^n\e_{j,j}\otimes\la_{c_j}\Bigr)}_{\Ell^p(\tr\otimes\tau)}\\
      &=\Bignorm{\sum_{i=1}^nx_{i,j}\e_{i,j}\otimes\la_{r_ic_j}}_{\Ell^p(\tr\otimes\tau)}
    \end{aligned}
  \end{multline*}
  and
  \begin{equation*}
    \Bignorm{\sum_{i=1}^nx_{i,j}\e_{i,j}\otimes(U_n(\la_{r_ic_j})-\ph_{r_ic_j}\la_{r_ic_j})}_{\Ell^p(\tr\otimes\tau)}<n^2\eps\norm{x}_{\Sch^p_n},
  \end{equation*}
  the complete norm of~$U_n$ is nearly bounded below by the norm of
  the Riesz projection on~$\Sch^p_n$:
  \begin{align*}
    \Bignorm{\sum_{i=1}^nx_{i,j}\e_{i,j}\otimes U_n(\la_{r_ic_j})}_{\Ell^p(\tr\otimes\tau)}
    &>\Bignorm{\sum_{i\le j}x_{i,j}\e_{i,j}\otimes\la_{r_ic_j}}_{\Ell^p(\tr\otimes\tau)}-n^2\eps\norm{x}_{\Sch^p_n}\\
    &=\norm{\Rt(x)}_{\Sch^p_n}-n^2\eps\norm{x}_{\Sch^p_n}.
  \end{align*}
  This proves~$(a)$ as well as the first assertion in~$(b)$, because
  the Riesz projection is unbounded on~$\Sch^1$.  Let $(T_k)$~be an
  approximating sequence on~$\Cont_\La$; by Lemma~\ref{fm}, we may
  suppose that $(T_k)$~is a sequence of Fourier multipliers. Thus the
  second assertion in~$(b)$ follows from Lemma~\ref{diag} combined
  with the preceding argument (where $\Sch^p_n$~is replaced
  by~$\mat_n$ and $\Ell^p(\tr\otimes\tau)$ by~$\mat_n\otimes\Cont$)
  and the unboundedness of the Riesz projection
  on~$\Sch^\infty$. For~$(c)$, note that the Fourier multipliers $T_k$
  are automatically c.b.\ on~$\Cont_\La$ if $\Ga$~is amenable
  (proof of Theorem~\ref{trans2}$\,(c)$).
\end{proof}

Theorem~\ref{T}$\,(b)$ was originally devised to prove that the
Hardy space $\mathrm H^1$, corresponding to the case
$\La=\N\subseteq\Z$ and $p=1$, admits no completely unconditional
basis (see \cite{ri00,ri01}).  Theorem~\ref{T}$\,(c)$ both generalises
the fact that a sumset cannot be a Sidon set (see
\cite[\S\S\,1.4,\,6.6]{lr75} for two proofs and historical remarks, or
\cite[Proposition~IV.7]{lq04}) and Daniel Li's result
\cite[Corollary~13]{li96} that the space~$\Cont_\La$ does not have the
``metric'' unconditional approximation property if $\Ga$~is abelian
and $\La$ contains a sumset.  Li (\cite[Theorem~10]{li96}) also
constructed a set $\La\subseteq\Z$ such that $\Cont_\La$ has this
property, while $\La$ contains the sumset of arbitrarily large sets.
This theorem also provides a new proof that the disc algebra has no
unconditional basis and answers \cite[Question~6.1.6]{ne99}.

\begin{exa}
  Neither the span of products~$\{r_ir_j\}$ of two Rademacher
  functions in the space of continuous functions on~$\{-1,1\}^\infty$ nor the span of
  products~$\{s_is_j\}$ of two Steinhaus functions
  in the space of continuous functions on~${\T}^\infty$ has an unconditional basis.
\end{exa}

\section{Relative Schur multipliers of rank one}\label{sec:rsm1}

Let $\rh$ be an \emph{elementary} Schur multiplier on $\Sch^\infty$, that is,
\[\rh=x\otimes y=(x_ry_c)_{(\row,\col)\in\Row\times\Col}.\] Then its
norm is
$\sup_{\row\in\Row}\abs{x_\row}\sup_{\col\in\Col}\abs{y_\col}$. How is
this norm affected if $\rh$ is only partially specified, that is, if
the action of~$\rh$ is restricted to matrices with a given support?

\begin{thm}
  \label{mps}
  Let\/ $I\subseteq\Row\times\Col$ and consider\/
  $(x_\row)_{\row\in\Row}$~and\/~$(y_\col)_{\col\in\Col}$. The relative
  Schur multiplier on\/~$\Sch^\infty_I$ given by\/ $(x_\row
  y_\col)_{(\row,\col)\in I}$ has norm\/~$\sup_{(\row,\col)\in
    I}\abs{x_\row y_\col}$.
\end{thm}

Note that the norm of the Schur multiplier $(x_\row
y_\col)_{(\row,\col)\in I}$ is bounded by
\begin{math}
  \sup_{\row\in\Row}\abs{x_\row}\*\sup_{\col\in\Col}\abs{y_\col}
\end{math}
because the matrix $(x_\row y_\col)_{(\row,\col)\in\Row\times\Col}$ is a
trivial extension of $(x_\row y_\col)_{(\row,\col)\in I}$; the proof
below provides a constructive nontrivial extension of this Schur
multiplier that is a composition of ampliations of the Schur
multiplier in the following lemma.

\begin{lem}
  \label{schur}
  The Schur multiplier\/
  \begin{math}
    \begin{pmatrix} \overline z&w\\ \overline w&z\end{pmatrix}
  \end{math}
  has norm\/~$\max(\abs{z},\abs{w})$ on\/~$\mat_2$.
\end{lem}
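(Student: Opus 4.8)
The plan is to prove the two inequalities separately; the lower bound is immediate and the upper bound carries all the content. For the lower bound, recall that $\norm{\e_{r,c}}_{\mat_2}=1$; since the Schur multiplier maps $\e_{1,1}\mapsto\bar z\,\e_{1,1}$ and $\e_{1,2}\mapsto w\,\e_{1,2}$, its norm on $\mat_2$ is at least $\abs z$ and at least $\abs w$, hence at least $\max(\abs z,\abs w)$.

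For the upper bound, the first step is a normalisation: I would reduce to the case of a nonnegative real matrix. Given diagonal unitaries $U=\diag(u_1,u_2)$ and $V=\diag(v_1,v_2)$, the map $m\mapsto U\,\Schur_\rh(m)\,V$ is an isometry of $\mat_2$ and coincides with $\Schur_{\rh'}$, where $\rh'_{ij}=u_iv_j\rh_{ij}$; so $\Schur_\rh$ and $\Schur_{\rh'}$ have the same norm. As the product of the two diagonal entries of $\rh$ is $\bar z z=\abs z^2\ge0$ and the product of its two off-diagonal entries is $w\bar w=\abs w^2\ge0$, one can choose $U$ and $V$ so that $\rh'=\begin{pmatrix}\abs z&\abs w\\\abs w&\abs z\end{pmatrix}$ (the degenerate cases $z=0$ or $w=0$ are covered by the same recipe, since a vanishing entry equals its modulus automatically, or may be checked directly). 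It therefore suffices to bound the norm of the Schur multiplier $\begin{pmatrix}s&t\\t&s\end{pmatrix}$ with $s=\abs z\ge0$ and $t=\abs w\ge0$.

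The second step is the decomposition
\[
  \begin{pmatrix}s&t\\t&s\end{pmatrix}
  =\frac{s+t}{2}\begin{pmatrix}1&1\\1&1\end{pmatrix}
  +\frac{s-t}{2}\begin{pmatrix}1&-1\\-1&1\end{pmatrix}.
\]
The Schur multiplier with pattern $\begin{pmatrix}1&1\\1&1\end{pmatrix}$ is the identity of $\mat_2$, and the one with pattern $\begin{pmatrix}1&-1\\-1&1\end{pmatrix}$ is $m\mapsto DmD$ with $D=\diag(1,-1)$, a conjugation by a unitary; both have norm~$1$. Hence the norm of $\Schur_{\rh'}$ is at most $\frac{s+t}{2}+\frac{\abs{s-t}}{2}=\max(s,t)=\max(\abs z,\abs w)$, which combined with the lower bound gives the lemma.

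I do not expect a genuine obstacle; the only point requiring a little care is the existence of the diagonal unitaries in the normalisation step, which comes down to the compatibility of the phases of the entries of $\rh$ — namely that $\rh_{11}\rh_{22}/(\rh_{12}\rh_{21})$ be a nonnegative real — and this holds here because that quotient equals $\abs z^2/\abs w^2$. Once the matrix is real and symmetric, the convex-type splitting into two norm-one multipliers does the rest, and the same splitting will be the building block for the ampliation argument in the proof of Theorem~\ref{mps}.
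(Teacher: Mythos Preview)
Your proof is correct and follows the same idea as the paper's: both express the multiplier as $\tfrac{|z|+|w|}{2}$ times one norm-one rank-one Schur multiplier plus $\tfrac{|z|-|w|}{2}$ times another, then use $\tfrac{|z|+|w|}{2}+\tfrac{\bigl||z|-|w|\bigr|}{2}=\max(|z|,|w|)$. The only difference is presentational---you first normalise to the nonnegative symmetric matrix $\bigl(\begin{smallmatrix}|z|&|w|\\|w|&|z|\end{smallmatrix}\bigr)$ via diagonal unitaries and then split, whereas the paper absorbs the phases directly into the rank-one factors by writing $z=|z|t^2$, $w=|w|u^2$ with $t,u\in\T$; one small slip to fix is that the map $m\mapsto U\,\Schur_\rh(m)\,V$ is not itself an isometry, rather $m\mapsto UmV$ is, which is what gives $\|\Schur_{\rh'}\|=\|\Schur_\rh\|$.
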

\begin{proof}
  This follows from the decomposition
  \begin{equation*}
    \begin{pmatrix} \overline z&w\\ \overline w&z\end{pmatrix}
    =\frac{\abs{z}+\abs{w}}2
    \begin{pmatrix}
      \bar tu\\t\bar u
    \end{pmatrix}
    \otimes
    \begin{pmatrix}
      \overline{tu}&tu
    \end{pmatrix}
    +\frac{\abs{z}-\abs{w}}2
    \begin{pmatrix}
      \bar tu\\-t\bar u
    \end{pmatrix}
    \otimes
    \begin{pmatrix}
      \overline{tu}&-tu
    \end{pmatrix},
  \end{equation*}
where $t,u\in\T$ are chosen so that $z=\abs{z}t^2$ and $w=\abs{w}u^2$.
 \end{proof}

\begin{proof}[Proof of Theorem~\ref{mps}]
  We may suppose that $\Col$~is the finite set~$\{1,\dots,m\}$ and
  that $\Row$~is the finite set~$\{1,\dots,n\}$, that each~$y_\col$ is
  nonzero, and that each row in~$\Row$ contains an element of~$I$.  We
  may also suppose that
  $(\abs{x_\row})_{\row\in\Row}$~and~$(\abs{y_\col})_{\col\in\Col}$
  are nonincreasing sequences. For each~$\row\in\Row$ let $c_r$~be the
  least column index of elements of~$I$ in or above row~$\row$; in
  other words,
  \begin{equation*}
    \col_\row=\min_{\row'\le\row}\min\{\col:(\row',\col)\in I\}.
  \end{equation*}
  The sequence~$(\col_\row)_{\row\in\Row}$ is nonincreasing. Let us
  define its inverse $(\row_\col)_{\col\in\Col}$ in the sense that
  $\row_\col\le\row\Leftrightarrow\col_\row\le\col$. For
  each~$\col\in\Col$, let $\row_\col=\min\{\row:\col_\row\le\col\}$.
  Given~$\row$, let $\row'\le\row$~be such that $(\row',\col_\row)\in
  I$; then $\abs{x_\row
    y_{\col_\row}}\le\abs{x_{\row'}y_{\col_\row}}$, so that
  $\sup_{\row\in\Row}\abs{x_\row y_{\col_\row}}\le\sup_{(\row,\col)\in
    I}\abs{x_\row y_\col}$ and the rank~1 Schur multiplier
  \[\rh_0=(x_\row y_{\col_\row})_{(\row,\col)\in\Row\times\Col}\] with pairwise equal columns is
  bounded by~$\sup_{(\row,\col)\in I}\abs{x_\row y_\col}$ on~$\Sch^\infty_n$.
  We will now ``correct''~$\rh_0$ without increasing its norm so as to make
  it an extension of~$(x_\row y_\col)_{(\row,\col)\in I}$. Let
  $\row\in\Row$~and~$\col'\ge\col_\row$; then
    \begin{align*}
      x_\row y_{\col'}
      =x_\row y_{\col_\row}\frac{y_{\col_\row+1}}{y_{\col_\row}}\cdots\frac{y_{\col'}}{y_{\col'-1}}
      &=x_\row y_{\col_\row}\prod_{\col_\row\le\col\le\col'-1}\frac{y_{\col+1}}{y_{\col}}\\
      &=x_\row y_{\col_\row}\prod_{\substack{\row\ge\row_\col\\\col'\ge\col+1}}\frac{y_{\col+1}}{y_{\col}}.
   \end{align*}
    This shows that it suffices to compose the Schur
    multiplier~$\rh_0$ with the $m-1$ rank~2 Schur multipliers with
    block matrix
    \begin{equation*}
      \rh_{\col}=
      \bordermatrix{%
        &\scriptstyle1~\cdots~\col&&\scriptstyle\col+1~\cdots~m\cr
        \begin{matrix}\scriptstyle1\\\scriptstyle\vdots\\\scriptstyle\row_\col-1\end{matrix}
        &\displaystyle\overline{\left(\frac{y_{\col+1}}{y_\col}\right)}&\vrule&1\cr\noalign{\kern-1pt}
        &\multispan3\hrulefill\cr\noalign{\kern-1pt}
        \hfil\begin{matrix}\scriptstyle\row_\col\\\scriptstyle\vdots\\\scriptstyle n\end{matrix}
        &1&\vrule&\displaystyle\frac{y_{\col+1}}{y_\col}\cr
        },
    \end{equation*}
    each of which has norm~1 on~$\Sch^\infty_n$ by Lemma~\ref{schur}. 
\end{proof}

\begin{rem}
  We learned after submitting this article that Timur Oikhberg proved
  Theorem~\ref{mps} independently and gave some applications to it; see~\cite{oi10}.
\end{rem}
\begin{rem}
  As an illustration, let $\Col=\Row=\{1,\dots,n\}$ and~$I=\{(r,c):r\ge
  c\}$, and let $a_i$~be an increasing sequence of positive numbers.
  Take $x_r=a_r$~and~$y_c=1/a_c$. Then the relative Schur
  multiplier~$(a_r/a_c)_{r\le c}$ has norm~1. The above proof actually
  constructs the norm~1 extension
  $\left(\min(a_r/a_c,a_c/a_r)\right)_{(r,c)}$. If we
  put~$a_i=\e^{x_i}$, we recover that $(\e^{-|x_r-x_c|})_{(r,c)}$~is
  positive definite, that is, $\abs{\cdot}$~is a conditionally
  negative function on~$\R$.
\end{rem}\medskip

\noindent
2010 \emph{Mathematics subject classification}:
Primary 47B49; Secondary 43A22, 43A46, 46B28.\medskip

\noindent
\emph{Key words and phrases}: Fourier multiplier, Toeplitz Schur
multiplier, lacunary set, unconditional approximation property,
Hilbert transform, Riesz projection.


\begin{thebibliography}{10}

\bibitem{AlPe02}
A.~B. Aleksandrov and V.~V. Peller.
\newblock \href{http://dx.doi.org/10.1007/s00208-002-0339-z}{Hankel and
  {T}oeplitz-{S}chur multipliers}.
\newblock {\em Math. Ann.}, 324(2):277--327, 2002.

\bibitem{be97}
Erik B{\'e}dos.
\newblock On {F}\o lner nets, {S}zeg{\H o}'s theorem and other eigenvalue
  distribution theorems.
\newblock {\em Exposition. Math.}, 15(3):193--228, 1997.
\newblock \iflanguage{french}{Erratum p.~384}{Erratum on p.~384}.

\bibitem{bo01}
Jean Bourgain.
\newblock {$\Lambda\sb p$}-sets in analysis: results, problems and related
  aspects.
\newblock In {\em Handbook of the geometry of {B}anach spaces, {V}ol. {I}},
  pages 195--232. North-Holland, Amsterdam, 2001.

\bibitem{bo81a}
Marek Bo{\.z}ejko.
\newblock \href{http://matwbn.icm.edu.pl/ksiazki/sm/sm70/sm70114.pdf}{A new
  group algebra and lacunary sets in discrete noncommutative groups}.
\newblock {\em Studia Math.}, 70(2):165--175, 1981.

\bibitem{bf84}
Marek Bo{\.z}ejko and Gero Fendler.
\newblock Herz-{S}chur multipliers and completely bounded multipliers of the
  {F}ourier algebra of a locally compact group.
\newblock {\em Boll. Un. Mat. Ital. A (6)}, 3(2):297--302, 1984.

\bibitem{bo08}
Nathanial~P. Brown and Narutaka Ozawa.
\newblock {\em {$C\sp *$}-algebras and finite-di\-men\-sional approximations},
  volume~88 of {\em Graduate Studies in Mathematics}.
\newblock American Mathematical Society, Providence, RI, 2008.

\bibitem{dh85}
Jean De~Canni{\`e}re and Uffe Haagerup.
\newblock \href{http://dx.doi.org/10.2307/2374423}{Multipliers of the {F}ourier
  algebras of some simple {L}ie groups and their discrete subgroups}.
\newblock {\em Amer. J. Math.}, 107(2):455--500, 1985.

\bibitem{Gam78}
T.~W. Gamelin.
\newblock {\em Uniform algebras and {J}ensen measures}.
\newblock Cambridge University Press, Cambridge-New York, 1978.

\bibitem{GoKr92}
Israel Gohberg and Naum Krupnik.
\newblock {\em One-dimensional linear singular integral equations. {I}},
  volume~53 of {\em Operator Theory: Advances and Applications}.
\newblock Birkh\"auser Verlag, Basel, 1992.

\bibitem{Gk68}
I.~Ts. Gokhberg and N.~Ya. Krupnik.
\newblock Norm of the {H}ilbert transformation in the ${L}_p$ space.
\newblock {\em Funktsional. Anal. i Prilozhen.}, 2(2):91--92, 1968.
\newblock Russian, \href{http://dx.doi.org/10.1007/BF01075955}{English
  translation} in \emph{Funct. Anal. Appl.}, 2(2):180--181, 1968.

\bibitem{hk94}
Uffe Haagerup and Jon Kraus.
\newblock \href{http://dx.doi.org/10.2307/2154501}{Approximation properties for
  group {$C\sp *$}-algebras and group von {N}eumann algebras}.
\newblock {\em Trans. Amer. Math. Soc.}, 344(2):667--699, 1994.

\bibitem{ha98}
Asma Harcharras.
\newblock \href{http://matwbn.icm.edu.pl/ksiazki/sm/sm137/sm13731.pdf}{Fourier
  analysis, {S}chur multipliers on {$S\sp p$} and non-com\-mu\-ta\-tive
  {$\Lambda(p)$}-sets}.
\newblock {\em Studia Math.}, 137(3):203--260, 1999.

\bibitem{HoKaVe03}
B.~Hollenbeck, N.~J. Kalton, and I.~E. Verbitsky.
\newblock \href{http://dx.doi.org/10.4064/sm157-3-2}{Best constants for some
  operators associated with the {F}ourier and {H}ilbert transforms}.
\newblock {\em Studia Math.}, 157(3):237--278, 2003.

\bibitem{HoVe00}
Brian Hollenbeck and Igor~E. Verbitsky.
\newblock \href{http://dx.doi.org/10.1006/jfan.2000.3616}{Best constants for
  the {R}iesz projection}.
\newblock {\em J. Funct. Anal.}, 175(2):370--392, 2000.

\bibitem{jr03}
Marius Junge and Zhong-Jin Ruan.
\newblock \href{http://dx.doi.org/10.1215/S0012-7094-03-11724-X}{Approximation
  properties for noncommutative {$L\sb p$}-spaces associated with discrete
  groups}.
\newblock {\em Duke Math. J.}, 117(2):313--341, 2003.

\bibitem{li96}
Daniel Li.
\newblock \href{http://matwbn.icm.edu.pl/ksiazki/sm/sm121/sm12133.pdf}{Complex
  unconditional metric approximation property for {${\mathcal C}\sb
  \Lambda({\mathbb T})$} spaces}.
\newblock {\em Studia Math.}, 121(3):231--247, 1996.

\bibitem{lq04}
Daniel Li and Herv{\'e} Queff{\'e}lec.
\newblock {\em Introduction \`a l'\'etude des espaces de {B}anach}, volume~12
  of {\em Cours Sp\'ecialis\'es}.
\newblock Soci\'et\'e Math\'ematique de France, Paris, 2004.

\bibitem{lr75}
Jorge~M. L{\'o}pez and Kenneth~A. Ross.
\newblock {\em Sidon sets}.
\newblock Lecture Notes Pure Appl. Math. 13. Marcel Dekker Inc., New York,
  1975.

\bibitem{MaWe98}
Michael Marsalli and Graeme West.
\newblock
  \href{http://www.mathjournals.org/jot/1998-040-002/1998-040-002-006.html}{No%
ncommutative {$H\sp p$} spaces}.
\newblock {\em J. Operator Theory}, 40(2):339--355, 1998.

\bibitem{nptv}
F.~Nazarov, G.~Pisier, S.~Treil, and A.~Volberg.
\newblock \href{http://dx.doi.org/10.1515/crll.2002.004}{Sharp estimates in
  vector {C}arleson imbedding theorem and for vector paraproducts}.
\newblock {\em J. Reine Angew. Math.}, 542:147--171, 2002.

\bibitem{ne99}
Stefan Neuwirth.
\newblock {\em Multiplicateurs et analyse fonctionnelle}.
\newblock PhD thesis, Universit\'e Paris 6, 1999.
\newblock \href{http://tel.archives-ouvertes.fr/tel-00010399}
  {\texttt{http\string:/\kern-2pt/\allowbreak
  tel.archives-ouvertes.fr/\allowbreak tel-00010399}}.

\bibitem{ne06}
Stefan Neuwirth.
\newblock \href{http://dx.doi.org/10.1017/S0024611506015899} {Cycles and
  1-unconditional matrices}.
\newblock {\em Proc. London Math. Soc. (3)}, 93(3):761--790, 2006.

\bibitem{Ober76}
Daniel~M. Oberlin.
\newblock
  \href{http://projecteuclid.org/getRecord?id=euclid.mmj/1029001660}{Translati%
on-invariant operators on {$L\sp{p}(G),$} {$0<p<1$}}.
\newblock {\em Michigan Math. J.}, 23(2):119--122, 1976.

\bibitem{oi10}
Timur Oikhberg.
\newblock \href{http://dx.doi.org/10.1090/S0002-9939-10-10203-2}{Restricted
  {S}chur multipliers and their applications}.
\newblock {\em Proc. Amer. Math. Soc.}, 138(5):1739--1750, 2010.

\bibitem{ol96}
Victor Olevskii.
\newblock \href{http://dx.doi.org/10.1007/BF01203030}{A connection between
  {F}ourier and {S}chur multipliers}.
\newblock {\em Integral Equations Operator Theory}, 25(4):496--500, 1996.

\bibitem{or92}
W{\l}adys{\l}aw Orlicz.
\newblock {\em Linear functional analysis}, volume~4 of {\em Series in Real
  Analysis}.
\newblock World Scientific Publishing Co. Inc., River Edge, NJ, 1992.

\bibitem{pa02}
Vern Paulsen.
\newblock {\em Completely bounded maps and operator algebras}, volume~78 of
  {\em Cambridge Studies in Advanced Mathematics}.
\newblock Cambridge University Press, Cambridge, 2002.

\bibitem{pps89}
Vern~I. Paulsen, Stephen~C. Power, and Roger~R. Smith.
\newblock \href{http://dx.doi.org/10.1016/0022-1236(89)90050-5} {Schur products
  and matrix completions}.
\newblock {\em J. Funct. Anal.}, 85(1):151--178, 1989.

\bibitem{pe03}
Vladimir~V. Peller.
\newblock {\em Hankel operators and their applications}.
\newblock Springer Monographs in Mathematics. Springer-Verlag, New York, 2003.

\bibitem{Pi72}
S.~K. Pichorides.
\newblock \href{http://matwbn.icm.edu.pl/ksiazki/sm/sm44/sm44114.pdf}{On the
  best values of the constants in the theorems of {M}. {R}iesz, {Z}ygmund and
  {K}olmogorov}.
\newblock {\em Studia Math.}, 44:165--179. (errata insert), 1972.

\bibitem{pi98}
Gilles Pisier.
\newblock Non-commutative vector valued {$L\sb p$}-spaces and completely
  {$p$}-summing maps.
\newblock {\em Ast\'erisque}, 247, 1998.

\bibitem{pi01}
Gilles Pisier.
\newblock {\em Similarity problems and completely bounded maps}, volume 1618 of
  {\em Lecture Notes in Mathematics}.
\newblock Springer-Verlag, Berlin, second, expanded edition, 2001.
\newblock Includes the solution to ``The Halmos problem''.

\bibitem{Ran98}
Narcisse Randrianantoanina.
\newblock \href{http://www.austms.org.au/Publ/JAustMS/V65P3/abs/p92}{Hilbert
  transform associated with finite maximal subdiagonal algebras}.
\newblock {\em J. Austral. Math. Soc. Ser. A}, 65(3):388--404, 1998.

\bibitem{ri00}
{\'E}ric Ricard.
\newblock \href{http://dx.doi.org/10.1016/S0764-4442(00)01680-3}{L'espace
  {$H\sp 1$} n'a pas de base compl\`etement inconditionnelle}.
\newblock {\em C. R. Acad. Sci. Paris S\'er. I Math.}, 331(8):625--628, 2000.

\bibitem{ri01}
{\'E}ric Ricard.
\newblock {\em D\'ecompositions de ${H}^1$, multiplicateurs de Schur et espaces
  d'op\'erateurs}.
\newblock PhD thesis, Universit\'e Paris 6, 2001.
\newblock \href{http://www.institut.math.jussieu.fr/theses/2001/ricard}
  {\texttt{www.\allowbreak institut.\allowbreak math.\allowbreak
  jussieu.fr/\allowbreak theses/\allowbreak 2001/\allowbreak ricard}}.

\bibitem{ru60}
Walter Rudin.
\newblock
  \href{http://www.iumj.indiana.edu/IUMJ/FULLTEXT/1960/9/59013}{Trigonometric series with gaps}.
\newblock {\em J. Math. Mech.}, 9(2):203--228, 1960.

\bibitem{va69}
Nicholas Varopoulos.
\newblock \href{http://dx.doi.org/10.1016/0022-1236(69)90046-9} {Tensor
  algebras over discrete spaces}.
\newblock {\em J. Functional Analysis}, 3(2):321--335, 1969.

\bibitem{Zs80}
L{\'a}szl{\'o} Zsid{\'o}.
\newblock \href{http://dx.doi.org/10.1016/0001-8708(80)90016-X}{On spectral
  subspaces associated to locally compact abelian groups of operators}.
\newblock {\em Adv. in Math.}, 36(3):213--276, 1980.

\end{thebibliography}

\end{document}